\theoremstyle{definition}
\newtheorem* {theorem*}{Theorem}
\newtheorem{theorem}{Theorem}[section]
\theoremstyle{definition}
\newtheorem{observation}[theorem]{Observation}
\newtheorem* {example*}{Example}
\newtheorem{lemma}[theorem]{Lemma}
\theoremstyle{definition}
\newtheorem{definition}[theorem]{Definition}
\theoremstyle{definition}
\newtheorem{proposition}[theorem]{Proposition}
\newtheorem{corollary}[theorem]{Corollary}
\newtheorem* {remark}{Remark}
\theoremstyle{definition}
\theoremstyle{definition}
\theoremstyle{definition}
\theoremstyle{definition}
\newtheorem* {remarks}{Remarks}
\numberwithin{equation}{section}
\def\({\left(}
\def\){\right)}
 \newcommand{\FF}{\mathbb{F}}  \newcommand{\CC}{\mathbb{C}}      
 \newcommand{\cK}{\mathcal{K}}  
  \newcommand{\cS}{\mathcal{S}}\newcommand{\cT}{\mathcal{T}} 
\newcommand{\cC}{\mathcal{C}}
\newcommand{\cD}{\mathcal{D}}
\newcommand{\cZ}{\mathcal{Z}}
\def\NN{\mathbb{N}}
    \def\ZZ{\mathbb{Z}}  \def\X{\mathfrak{X}} \def\sl{\mathfrak{sl}}   \def\Res{\mathrm{Res}}       
  \def\wt{\widetilde}
   \newcommand{\rank}{\mathrm{rank}}
\def\X{\mathcal{X}}
\def\barr{\begin{array}}
\def\earr{\end{array}}
\def\ba{\begin{aligned}}
\def\ea{\end{aligned}}
\def\be{\begin{equation}}
\def\ee{\end{equation}}
\def\cS{\mathcal{S}}
\def\UT{\textbf{U}}
\def\cX{\mathcal{X}}
\def\cY{\mathcal{Y}}
\def\sP{\Pi}
\def\omdef{:}
\newcommand\stirl[2]
\newcommand\stirlstirl[2]
\newcommand\stirlb[2]
\def\Arc{\mathrm{Arc}}
\def\Cov{\mathrm{Cov}}
\def\X{\mathrm{X}}
\def\B{\mathrm{B}}
\def\D{\mathrm{D}}
\newcommand\Bell[2]{\mathcal{B}_{#1}(#2)}
\newcommand\Cat[2]{\mathcal{N}_{#1}(#2)}
\newcommand\CatB[2]{\mathcal{N}^{\B}_{#1}(#2)}
\newcommand\CatD[2]{\mathcal{N}^{\D}_{#1}(#2)}
\newcommand\BellB[2]{\mathcal{B}^{\B}_{#1}(#2)}
\newcommand\BellD[2]{\mathcal{B}^{\D}_{#1}(#2)}
\newcommand\CatX[2]{\mathcal{N}^{\X}_{#1}(#2)}
\newcommand\BellX[2]{\mathcal{B}^{\X}_{#1}(#2)}
\newcommand\Fe[2]{\mathcal{F}_{#1}(#2)}
\newcommand\Mo[2]{ \mathcal{M}_{#1}(#2)}
\newcommand\MoX[2]{ \mathcal{M}^{\X}_{#1}(#2)}
\newcommand\wMoB[2]{ \widetilde{\mathcal{M}}^{\B}_{#1}(#2)}
\newcommand\MoB[2]{ \mathcal{M}^{\B}_{#1}(#2)}
\newcommand\MoD[2]{ \mathcal{M}^{\D}_{#1}(#2)}
\newcommand\FeX[2]{\mathcal{F}^{\X}_{#1}(#2)}
\newcommand\FeB[2]{\mathcal{F}^{\B}_{#1}(#2)}
\newcommand\wFeB[2]{ \widetilde{\mathcal{F}}^{\B}_{#1}(#2)}
\newcommand\FeD[2]{\mathcal{F}^{\D}_{#1}(#2)}
\def\NC{\mathrm{N  C}}
\def\NCB{\NC^{\B}}
\def\NCD{\NC^{\D}}
\def\tNC{\wt\NC{}}
\def\tNCX{\tNC^\X}
\def\tNCB{\tNC^{\B}}
\def\tNCD{\tNC^{\D}}
\def\NN{\mathrm{NN}}
\def\NNB{\NN^{\B}}
\def\sPX{\sP^{\X}}
\def\sPC{\sP^{\mathrm C}}
\def\sPB{\sP^{\B}}
\def\sPD{\sP^{\D}}
\def\chiB{\chi^\B}
\def\chiD{\chi^\D}
\def\ds{\displaystyle}
\def\shift{\text{\emph{shift}}}
\def\AA{\mathbb{A}}
\def\Lin{\mathrm{L}}
\def\LinX{\mathrm{L}^{\X}}
\def\LinB{\mathrm{L}^{\mathrm{B}}}
\def\LinD{\mathrm{L}^{\mathrm{D}}}
\def\BB{\mathbb{B}}
\renewcommand{\@makefnmark}{\mbox{\textsuperscript{}}}
\title{Actions and identities on set partitions}
\author{Eric Marberg
\\
\small Department of Mathematics\\[-0.8ex]
\small Massachusetts Institute of Technology\\
\small \texttt{emarberg@math.mit.edu}
}
\date{}
\begin{document}

\maketitle

\begin{abstract}
A labeled set partition is a partition of a set of integers whose arcs are labeled by nonzero elements of an abelian group $\AA$.
Inspired by the action of the linear characters of the unitriangular group on its supercharacters, we define a group action of $\AA^n$ on the set of $\AA$-labeled partitions of an $(n+1)$-set.
By investigating the orbit decomposition of various families of set partitions under this action, we derive new combinatorial proofs of Coker's identity for the Narayana polynomial and its type B analogue, and establish a number of other related identities. In return, we also prove some enumerative results concerning Andr\'e and Neto's supercharacter theories of type B and D.
\end{abstract}


\section{Introduction}

A \emph{set partition} is formally a set of nonempty, pairwise disjoint sets, which we always assume consist of integers and which we refer to as \emph{blocks}. 
We call a pair of integers $(i,j)$   an \emph{arc} of a set partition  if $i$ and $j$ occur in the same block and $j$ is the least element of the block greater than $i$.  Let $\Arc(\Lambda)$ denote the set of arcs of a set partition $\Lambda$. 

We write $\Lambda \vdash \cX$ and say that $\Lambda$ is a \emph{partition} of a set $\cX$ if $\Lambda$ is a set partition the union of whose blocks is $\cX$. 
The \emph{standard representation} of a  partition $\Lambda\vdash\cX$ is then the directed graph with vertex set $\cX$ and edge set $\Arc(\Lambda)$, drawn by listing the elements of $\cX$ in order with the corresponding arcs overhead.
For example, the set partitions $\Lambda= \{ \{ 1,3,4,7\} ,\{2,6\}, \{5\} \}$ and $\Gamma =  \{ \{ 1,7\} ,\{2,3,4,6\}, \{5\} \} $ have 
  standard representations
\be\label{std-ex} \Lambda = \xy<0.0cm,-0.0cm> \xymatrix@R=-0.0cm@C=.3cm{
*{\bullet} \ar @/^.7pc/ @{-} [rr]   & 
*{\bullet} \ar @/^1.0pc/ @{-} [rrrr] &
*{\bullet} \ar @/^.5pc/ @{-} [r]  &
*{\bullet} \ar @/^0.7pc/ @{-} [rrr] &
*{\bullet} &
*{\bullet}  &
*{\bullet}\\
1   & 
2 &
3  &
4 &
5 &
6 & 
7
}\endxy
\qquad\text{and}\qquad
 \Gamma = \xy<0.0cm,-0.0cm> \xymatrix@R=-0.0cm@C=.3cm{
*{\bullet} \ar @/^1.2pc/ @{-} [rrrrrr]   & 
*{\bullet} \ar @/^.5pc/ @{-} [r] &
*{\bullet} \ar @/^.5pc/ @{-} [r]  &
*{\bullet} \ar @/^0.7pc/ @{-} [rr] &
*{\bullet} &
*{\bullet} &
*{\bullet}\\
1   & 
2 &
3  &
4 &
5 &
6 &
7
}\endxy
\ee
since $\Arc(\Lambda) = \{ (1,3), (2,6), (3,4),(4,7)\}$ and $\Arc(\Gamma) = \{ (1,7),(2,3),(3,4),(4,6)\}$.  
A set partition $\Lambda$ is \emph{noncrossing} if no two arcs $(i,k),(j,l) \in \Arc(\Lambda)$ have $i<j<k<l$, which means that no arcs cross in its standard representation.

%

This paper investigates a group action on   set  partitions which are labeled in the following sense.
Given  an additive abelian group $\AA$, an \emph{$\AA$-labeled set partition} is a set partition $\Lambda$ with a map $\Arc(\Lambda) \to \AA\setminus\{0\}$, denoted $(i,j) \mapsto \Lambda_{ij}$.
%
This is essentially the definition of a colored rhyming scheme as studied in \cite{Rogers}, except that we require the colors  to form the set of nonzero elements of an abelian group. 
%
For each nonnegative integer $n$, we define
\[ \ba 
\sP(n,\AA) &\omdef =  \text{the set of $\AA$-labeled  partitions of $[n]\omdef=\{ i \in \ZZ : 1\leq i \leq n\}$}, \\
\NC(n,\AA) &\omdef =  \text{the set of $\AA$-labeled noncrossing partitions of $[n]$}, \\
\Lin(n,\AA) &\omdef =  \text{the set of $\AA$-labeled  partitions of $[n]$  with consecutive integer blocks}.\ea\] 
Note that an $\AA$-labeled partition $\Lambda \vdash[n]$ belongs to 
$\Lin(n,\AA)$ if and only if every arc of $\Lambda$ has the form $(i,i+1)$ for some $i \in [n-1]$. 
We begin by defining an operation of $\Lin(n,\AA)$ on $\sP(n,\AA)$.

\begin{definition}\label{+def}
Given $\alpha \in \Lin(n,\AA)$ and $\Lambda \in \sP(n,\AA)$, define $\alpha+\Lambda$ as the $\AA$-labeled partition of $[n]$ whose standard representation is obtained by the following procedure:
\begin{enumerate}
\item[$\bullet$] List the numbers $1,2,\dots,n$  and draw the labeled arcs of both $\alpha$ and $\Lambda$ overhead.  

\item[$\bullet$] Whenever two arcs coincide,  add their labels and replace the pair with a single arc.  

\item[$\bullet$] Whenever two distinct arcs share an endpoint,  delete the shorter arc.  

\item[$\bullet$] Finally, remove any arcs labeled by zero.
\end{enumerate}
For example, 
\[ \ba
\( \xy<0.0cm,-0.0cm> \xymatrix@R=-0.0cm@C=.3cm{
*{\bullet} \ar @/^.5pc/ @{-} [r]^{a} &
*{\bullet} \ar @/^.5pc/ @{-} [r]^b  &
*{\bullet}\ar @/^.5pc/ @{-} [r]^c &
*{\bullet} \ar @/^.5pc/ @{-} [r]^d &
*{\bullet} \\
1   & 
2 &
3  &
4 &
5 
}\endxy\)
+
\( \xy<0.0cm,-0.0cm> \xymatrix@R=-0.0cm@C=.3cm{
*{\bullet} \ar @/^.5pc/ @{-} [r]^{-a} &
*{\bullet} \ar @/^.5pc/ @{-} [r]^{e} &
*{\bullet} \ar @/^0.7pc/ @{-} [rr]^f &
*{\bullet} &
*{\bullet} \\
1   & 
2 &
3  &
4 &
5 
}\endxy\)
&=
\xy<0.0cm,-0.0cm> \xymatrix@R=-0.0cm@C=.3cm{
*{\bullet} \ar @/^.5pc/ @{-} [r]^0  &
*{\bullet} \ar @/^.5pc/ @{-} [r]^{b+e}  &
*{\bullet} \ar @/^0.8pc/ @{-} [rr]^f \ar @/^.5pc/ @{-} [r]_c &
*{\bullet} \ar @/^.5pc/ @{-} [r]_d &
*{\bullet} \\
1   & 
2 &
3  &
4 &
5 
}\endxy
\\&
=
\xy<0.0cm,-0.0cm> \xymatrix@R=-0.0cm@C=.3cm{
*{\bullet} \ar @/^.5pc/ @{-} [r]^0  &
*{\bullet} \ar @/^.5pc/ @{-} [r]^{b+e}  &
*{\bullet} \ar @/^0.8pc/ @{-} [rr]^f  &
*{\bullet}  &
*{\bullet} \\
1   & 
2 &
3  &
4 &
5 
}\endxy
\\&
=
\xy<0.0cm,-0.0cm> \xymatrix@R=-0.0cm@C=.3cm{
*{\bullet}  &
*{\bullet} \ar @/^.5pc/ @{-} [r]^{b+e}  &
*{\bullet} \ar @/^0.8pc/ @{-} [rr]^f &
*{\bullet} &
*{\bullet} \\
1   & 
2 &
3  &
4 &
5 
}\endxy
\ea\]
for $a,b,c,d,e,f \in \AA\setminus\{0\}$ with $b\neq -e$.
\end{definition}

The operation $+$ gives $\Lin(n,\AA)$  the structure of an abelian group isomorphic to $\AA^{n-1}$ which acts on both $\sP(n,\AA)$ and $\NC(n,\AA)$.  This action has several interesting properties and serves as a useful tool for providing succinct combinatorial proofs of some notable identities. The following is a motivating example.
In studying some enumerative problems associated with a class of lattice paths, Coker \cite{Coker_Enum}  derived, using generating functions and the Lagrange inversion formula, the equivalence of two expressions for the rank generating function of the lattice of noncrossing partitions of type $A_n$. This amounted to the  identity
\be\label{intro1} \sum_{k=0}^n \frac{1}{n+1} \binom{n+1}{k} \binom{n+1}{k+1} x^k = \sum_{k=0}^{\lfloor n/2 \rfloor} \cC_k \binom{n}{2k} x^k (1+x)^{n-2k},\ee
with $\cC_k = \frac{1}{k+1} \binom{2k}{k}$ denoting the $k$th Catalan number.  
Somewhat earlier, Riordan 
included in his book \cite{riordan} a similar equation involving the rank generating function of the lattice of noncrossing partitions of type $B_n$:
\be\label{intro2} \sum_{k=0}^n \binom{n}{k}^2 x^k = \sum_{k=0}^{\lfloor n/2\rfloor}  \binom{2k}{k}  \binom{n}{2k} x^k (x+1)^{n-2k}.
\ee
We obtain a simple combinatorial proof of (\ref{intro1}) from Definition \ref{+def} by noting that  when $x=|\AA|-1$, the terms in the left  sum count the elements of $\NC(n+1,\AA)$ with $n-k$ blocks, while the terms in the right  sum count the elements of $\NC(n+1,\AA)$ whose $\Lin(n+1,\AA)$-orbits have size $|\AA|^{n-2k}$. The second identity    (\ref{intro2}) follows by the same argument applied to a certain family of  ``type B'' $\AA$-labeled set partitions; see the remarks to Theorems \ref{identities} and \ref{b-identities} below.

\newcommand*\pFq[2]{{}_{#1}F_{#2}\genfrac[]{0pt}{}}
\newcommand*\F[4]{{}_{2}F_{1}\hspace{-0.5mm}\(#1,#2;#3;#4\)}

\begin{remark}
Two recent papers have supplied combinatorial proofs for (\ref{intro1}) and (\ref{intro2}) using  quite different methods.  In \cite{CYY}, Chen, Yan, and Yang prove (\ref{intro1}) by inspecting a weighted version of a bijection between Dyck paths and 2-Motzkin paths; in \cite{CWZ}, Chen, Wang, and Zhang prove (\ref{intro2}) by enumerating certain weighted type B noncrossing partitions.
Algebraic proofs of these identities are much easier to come by: as pointed out by Christian Krattenthaler,
 (\ref{intro1}) and (\ref{intro2}) are respectively
the  special cases 
$(a,b) = (-n,-n-1)$ and $(a,b) = (-n,-n)$ of 
 the quadratic transformation formula
\be\label{hg} \F{a}{b}{1+a-b}{x} = \frac{\F{\tfrac{a}{2}}{\tfrac{a}{2} + \tfrac{1}{2}}{1+a-b}{ \tfrac{4x}{(1+x)^2}}}{(1+x)^{a}}
\ee  for the hypergeometric function \cite[Eq.\ 2.11(34)]{Erdelyi}. This more general identity has been  known since at least 1881, when it appeared in an equivalent form as \cite[Eq.\ (36)]{Goursat}.
\end{remark}

Definition \ref{+def} is motivated by the representation theory of $\UT_n(\FF_q)$,  the group  of $n\times n$ unipotent upper triangular matrices over a finite field with $q$ elements.
In detail, the $\FF_q$-labeled partitions of $[n]$ naturally index the \emph{supercharacters} of $\UT_n(\FF_q)$, a certain family of complex characters whose constituents partition the set of the group's irreducible characters and which have a number of other notable properties (see \cite{Thiem} for a concise overview).
Given $\lambda \in \sP(n,\FF_q)$, let $\chi_\lambda$ denote the associated supercharacter (see Section \ref{last-sect} below for an explicit definition).
The correspondence $\lambda \mapsto \chi_\lambda$ then defines a bijection from $\Lin(n,\FF_q)$ to the set of linear characters of $\UT_n(\FF_q)$, and if $\alpha \in \Lin(n,\FF_q)$ and $\lambda \in \sP(n,\FF_q)$ then the product of the characters $\chi_\alpha$ and $\chi_\lambda$ is precisely $\chi_{\alpha+\lambda}$ (see \cite[Corollary 4.7]{Thiem}). If general, if $\lambda,\mu \in \sP(n,\FF_q)$ then the product of $\chi_\lambda$ and $\chi_\mu$ is a linear combination $\sum_{\nu \in \sP(n,\FF_q)} c_{\lambda\mu}^\nu \chi_\nu$ for some nonnegative integers $c_{\lambda\mu}^\nu$. Finding a combinatorial rule to determine these coefficients  is an open problem, notably studied in \cite{tensor}.

We organize this article as follows.
In Section \ref{action-sec} we reexamine Definition \ref{+def} in slightly greater detail and introduce a few useful conventions.
We carry out a careful analysis of the labeled set partition orbits  under our action  in Section \ref{classical}, and use this to give combinatorial proofs of several identities in the style of (\ref{intro1}) and (\ref{intro2}). Sections \ref{nonnesting} and 
\ref{b-section}  introduce type B and D analogues for the family of labeled set partitions studied in Section \ref{classical}, and in Section \ref{last} we undertake a similar orbit analysis to prove  analogues of our classical identities in these other types.  In Section \ref{last-sect}, we explore the connection between our methods and the supercharacters of the unitriangular group more closely.  
In particular, we provide explicit, succinct definitions of Andr\'e and Neto's supercharacters of type B and D, and compute the sizes of several natural families of these characters. 

%
%
%
%
%
%

%

\section{Two equivalent definitions}
\label{action-sec}

In this preliminary section we note two equivalent characterizations of the operation $+$ presented in Definition \ref{+def}, and show how this operation leads to another proof of the rank symmetry of the lattice of noncrossing partitions.  We begin with the following observation, whose derivation from Definition \ref{+def} is a straightforward exercise.

\begin{observation} \label{+obs}
Given $\alpha \in \Lin(n,\AA)$ and $\Lambda \in \sP(n,\AA)$, let 
\[ \ba \cS &= \{ (j,j+1) \in \Arc(\alpha) \cap \Arc(\Lambda) : \alpha_{j,j+1} + \Lambda_{j,j+1} = 0\},\\
\cT &= \{ (j,j+1) \in \Arc(\alpha) : (i,j+1) \notin \Arc(\Lambda) \text{ and }(j,\ell) \notin \Arc(\Lambda)\text{ for all }i,\ell\}.\ea\] Then $\alpha+\Lambda$ is the element of $\sP(n,\AA)$ with arc set $(\Arc(\Lambda) - \cS) \cup \cT$ and labeling map 
\[ (\alpha+\Lambda)_{jk} = \begin{cases} \alpha_{jk} + \Lambda_{jk},&\text{if }(j,k) \in \Arc(\alpha) \cap \Arc(\Lambda) - \cS, \\
\alpha_{jk}, &\text{if }(j,k) \in \cT, \\
\Lambda_{jk},&\text{otherwise}.\end{cases}
\]
\end{observation}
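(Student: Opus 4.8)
The plan is to follow the four-step procedure of Definition~\ref{+def} arc by arc, exploiting the one structural feature that sets $\alpha$ apart: since $\alpha \in \Lin(n,\AA)$, every arc of $\alpha$ has the form $(j,j+1)$ and so has the minimal possible length among all arcs. Throughout I would read ``two distinct arcs share an endpoint'' in Step~3 as meaning that they share a common left endpoint or a common right endpoint; this is the only reading under which the operation restricts to the identity when $\alpha$ has no arcs (so that $0+\Lambda=\Lambda$), and it is the reading forced by the requirement that $\alpha+\Lambda$ be a genuine set partition. First I would sort the arcs drawn overhead in Step~1 into the three classes $\Arc(\alpha)\cap\Arc(\Lambda)$, $\Arc(\alpha)\setminus\Arc(\Lambda)$, and $\Arc(\Lambda)\setminus\Arc(\alpha)$, and determine the fate of each class.

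For a coincident arc $(j,j+1)\in\Arc(\alpha)\cap\Arc(\Lambda)$, Step~2 merges it into a single arc with label $\alpha_{j,j+1}+\Lambda_{j,j+1}$, and Step~4 deletes precisely those whose label vanishes, namely the elements of $\cS$; the survivors retain the summed label. For $(j,j+1)\in\Arc(\alpha)\setminus\Arc(\Lambda)$, I would note that if this arc shares a left or right endpoint with an arc of $\Lambda$, then that arc is some $(j,\ell)$ with $\ell>j+1$ or some $(i,j+1)$ with $i<j$, hence strictly longer, so Step~3 deletes the $\alpha$-arc. The arcs surviving this case are exactly those for which no arc of $\Lambda$ starts at $j$ and none ends at $j+1$, i.e.\ the elements of $\cT$, and they keep the label $\alpha_{j,j+1}$. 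Finally, an arc in $\Arc(\Lambda)\setminus\Arc(\alpha)$ can only ever be the longer party in any collision with an $\alpha$-arc, so it is never deleted and keeps its label $\Lambda_{jk}$.

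Assembling the survivors yields the arc set $\big((\Arc(\alpha)\cap\Arc(\Lambda))\setminus\cS\big)\cup(\Arc(\Lambda)\setminus\Arc(\alpha))\cup\cT$. Since $\cS\subseteq\Arc(\alpha)\cap\Arc(\Lambda)\subseteq\Arc(\Lambda)$, the first two pieces combine to $\Arc(\Lambda)\setminus\cS$, giving $(\Arc(\Lambda)-\cS)\cup\cT$ as claimed, and reading the labels off the three cases reproduces the stated piecewise map.

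The step I expect to demand the most care is confirming that Step~3 only ever pits an arc of $\alpha$ against an arc of $\Lambda$, so that the outcome is independent of the order in which collisions are resolved. This reduces to the observation that within a single set partition every vertex is the left endpoint of at most one arc and the right endpoint of at most one arc, whence no two arcs of $\Lambda$ (and none of $\alpha$) can share a left or right endpoint; all genuine conflicts therefore occur between the two overlaid partitions, and there the minimality of the $\alpha$-arcs resolves each one in favour of $\Lambda$, so that no arc of $\Lambda$ is ever the shorter party. One last check is that the retained collection really is a set partition: the surviving arcs of $\Lambda$ are conflict-free among themselves, and by the defining condition of $\cT$ each retained $\cT$-arc shares an endpoint with none of them.
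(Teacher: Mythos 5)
Your proof is correct: the paper leaves this observation as ``a straightforward exercise,'' and your three-way case analysis of $\Arc(\alpha)\cap\Arc(\Lambda)$, $\Arc(\alpha)\setminus\Arc(\Lambda)$, and $\Arc(\Lambda)\setminus\Arc(\alpha)$ is exactly the intended derivation. Your reading of ``share an endpoint'' (common left endpoint or common right endpoint) is confirmed by the matrix reformulation in Observation \ref{+def2}, and you rightly flag the one subtlety --- that all genuine conflicts pit an $\alpha$-arc against a strictly longer $\Lambda$-arc, so the outcome is order-independent.
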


This observation makes clear that 
$\alpha+\Lambda$ differs from $\Lambda$ only in its arcs of the form $(j,j+1)$.  Such arcs are never involved in crossings, and so
the action of $\Lin(n,\AA)$ on $\sP(n,\AA)$ preserves $\NC(n,\AA)$; i.e., $\alpha+\Lambda \in \NC(n,\AA)$ for all $\alpha \in \Lin(n,\AA)$ and $\Lambda \in \NC(n,\AA)$.

There is a useful bijection from $\sP(n,\AA)$ to the set of $n\times n$ matrices over $\AA$ 
which are strictly upper triangular and have at most one nonzero entry in each row and column.  The \emph{matrix} or \emph{rook diagram} of $\Lambda \in \sP(n,\AA)$ is the $n\times n$ matrix with the entry $\Lambda_{ij}$ in position $(i,j)$ for each $(i,j) \in \Arc(\Lambda)$ and zeros elsewhere.  A set partition is  noncrossing if and only if there is no position above the diagonal in its associated matrix which is both strictly south of a nonzero entry in the same column and strictly west of a nonzero entry in the same row.  Likewise, a set partition belongs to $\Lin(n,\AA)$ if and only if its matrix has nonzero positions only on the superdiagonal $\{ (i,i+1) : i \in [n-1]\}$.
This fact shows that we may also equivalently define the operation $+$ as follows:

\begin{observation}\label{+def2}
If $\alpha \in \Lin(n,\AA)$ and $\Lambda \in \sP(n,\AA)$ then $\alpha+\Lambda$ is the element of $\sP(n,\AA)$
 produced by the following procedure:
\begin{enumerate}
\item[1.]  Add the matrices of $\alpha$ and $\Lambda$ to form a matrix $M$ over $\AA$.
\item[2.] Replace with zero any nonzero positions $(i,i+1)$ on the superdiagonal of $M$  which lie strictly below a nonzero position in the same column or strictly to the left of a nonzero position in the same row.
%

\item[3.] Define $\alpha + \Lambda$ to be the element of $\sP(n,\AA)$ associated to the modified matrix $M$.
\end{enumerate}
\end{observation}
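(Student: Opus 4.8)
The plan is to deduce Observation~\ref{+def2} from the explicit description of $\alpha+\Lambda$ already recorded in Observation~\ref{+obs}; it then suffices to check that the three-step matrix procedure produces a set partition with exactly the arc set $(\Arc(\Lambda)-\cS)\cup\cT$ and the stated labeling. First I would record the shape of the matrix $M$ formed in Step 1. Since the matrix of $\alpha$ is supported on the superdiagonal, $M$ agrees with the matrix of $\Lambda$ in every off-superdiagonal position, while in position $(j,j+1)$ it carries the entry $\alpha_{j,j+1}+\Lambda_{j,j+1}$, with the convention that a missing arc contributes $0$.

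Next I would translate the geometric condition in Step 2 into the language of arcs. Because only $\Lambda$ contributes off the superdiagonal, a superdiagonal position $(j,j+1)$ lies strictly to the left of a nonzero entry in its row exactly when $(j,\ell)\in\Arc(\Lambda)$ for some $\ell>j+1$, and lies strictly below a nonzero entry in its column exactly when $(i,j+1)\in\Arc(\Lambda)$ for some $i<j$. I would then check that after these entries are zeroed the resulting matrix $M'$ has at most one nonzero entry in each row and column, so that Step 3 is well defined: whenever $\Lambda$ forces a second nonzero into row $j$ or column $j+1$, Step 2 deletes the competing superdiagonal entry, and otherwise the superdiagonal entry is the only occupant of its row and column among the candidates.

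Finally I would compare $M'$ with Observation~\ref{+obs} by a short case analysis on each superdiagonal position $(j,j+1)$, according to whether the arc lies in $\Arc(\alpha)$, in $\Arc(\Lambda)$, in both, or in neither. The crucial observation is that the Step 2 deletion condition is precisely the negation of the defining condition of $\cT$, so an arc $(j,j+1)\in\Arc(\alpha)\setminus\Arc(\Lambda)$ survives in $M'$ exactly when it lies in $\cT$, with label $\alpha_{j,j+1}$. When $(j,j+1)$ lies in both $\Arc(\alpha)$ and $\Arc(\Lambda)$, the matrix of $\Lambda$ occupies no other cell in row $j$ or column $j+1$, so Step 2 is inert and the entry $\alpha_{j,j+1}+\Lambda_{j,j+1}$ vanishes precisely on $\cS$, matching the $\Arc(\Lambda)-\cS$ term; the remaining cases (only $\Lambda$, or neither) are immediate and recover the labels $\Lambda_{jk}$ and the empty positions respectively. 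The main obstacle is the bookkeeping in this middle step, namely verifying that $M'$ is a genuine rook diagram and that the two local deletion rules of Step 2 exactly reproduce the global set $\cT$, rather than any conceptual difficulty; once this is confirmed, the agreement of arc sets and labels with Observation~\ref{+obs} yields $M'=\alpha+\Lambda$.
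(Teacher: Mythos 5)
Your proof is correct and takes essentially the route the paper intends: the paper states this observation without an explicit proof, treating it as the immediate translation of Definition \ref{+def} (equivalently, Observation \ref{+obs}) through the rook-diagram encoding, and your case analysis on superdiagonal positions is precisely the verification left implicit there. In particular, your identification of the Step 2 deletion rule with the negation of the defining condition of $\cT$, and the check that a coinciding arc in $\Arc(\alpha)\cap\Arc(\Lambda)$ is never deleted because $\Lambda$'s matrix has no other nonzero entry in that row or column, are exactly the points that need to be confirmed.
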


This formulation of Definition \ref{+def} most clearly illustrates that the addition $+$  makes $\Lin(n,\AA)$ into an abelian group acting on $\sP(n,\AA)$: the group is just the additive group of $n\times n$ matrices over $\AA$ whose nonzero entries are all on the superdiagonal.

Let $\sP(n)$ and $\NC(n)$ denote the sets of ordinary and noncrossing (unlabeled) partitions of $[n]$.  We may  view the elements of $\sP(n)$ and $\NC(n)$ as $\AA$-labeled set partitions by taking $\AA=\FF_2$ to be a finite field  with two elements.
 These sets are partially ordered by \emph{refinement}: $\Gamma \leq \Lambda$ if each block of $\Gamma \vdash[n]$ is contained in some block of $\Lambda \vdash[n]$. This partial order makes  $\sP(n)$ and $\NC(n)$ into graded lattices with height $n-1$ according to the rank function $\rank(\Lambda) = n-|\Lambda|$. The lattice $\NC(n)$ in particular  has a number of remarkable properties and an extensive literature (see \cite{memoir} for a survey).  

As a first application of the action of $\Lin(n,\AA)$ on $\sP(n,\AA)$, we note that the map \[\Lambda \mapsto \{\{1,2,\dots,n\}\} +\Lambda\] defines an involution of the set of ($\FF_2$-labeled) partitions of $[n]$.
We denote the image of $\Lambda \vdash[n]$ under this involution by $\Lambda^+$; the latter partition has the following 
explicit definition, which makes sense even for partitions of sets other than $[n]$.

\begin{definition}
Given a set $\cX \subset \ZZ$ and a  partition $\Lambda \vdash\cX$, let 
$\Lambda^+$ be the partition of $\cX$ with arc set  $(\Arc(\Lambda) - \cS) \cup \cT$, where $\cS = \{(i,i+1) : i \in \ZZ\}$  and $\cT$ is the set of pairs $(i,i+1)$ in $\cX\times \cX$ with the property that $i$ and $i+1$ are respectively  maximal and minimal in their blocks of $\Lambda$.  
\end{definition}

For example, we have  $\{\{1,4,5\},\{2,3\},\{6,7\},\{8\}\}^+ = \{\{14\},\{2\},\{3\},\{5,6\},\{7,8\}\}$.  
Our main point in presenting this involution is simply to note that it gives another proof of the fact that the lattice $\NC(n)$ is rank symmetric.

\begin{proposition}\label{alpha-prop} The map $\Lambda \mapsto \Lambda^+$ is rank inverting on $\NC(n)$. That is,
if $\Lambda \in \NC(n)$ has $k$ blocks, then $\Lambda^+ \in \NC(n)$ has $n+1-k$ blocks. 
\end{proposition}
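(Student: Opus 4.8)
The plan is to recast the statement as an identity about numbers of arcs and then prove that identity by an explicit bijection, with the noncrossing hypothesis entering at exactly one place. First I would record the bookkeeping identity $\rank(\Lambda) = n - |\Lambda| = |\Arc(\Lambda)|$, which holds because a block with $m$ elements contributes exactly $m-1$ arcs. Thus $\Lambda$ has $k$ blocks precisely when $|\Arc(\Lambda)| = n-k$, and $\Lambda^+$ has $n+1-k$ blocks precisely when $|\Arc(\Lambda^+)| = k-1$. Since $\Lambda^+ = \{\{1,\dots,n\}\}+\Lambda$, the discussion following Observation \ref{+obs} already gives $\Lambda^+ \in \NC(n)$, so it remains only to show $|\Arc(\Lambda^+)| = k-1$. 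By construction $\Arc(\Lambda^+) = (\Arc(\Lambda)\setminus\cS)\sqcup\cT$ is a disjoint union (as $\cT \subseteq \cS$) of the non-superdiagonal arcs of $\Lambda$, i.e.\ those $(i,j)$ with $j>i+1$, and the set $\cT$. Writing $a := |\Arc(\Lambda)\setminus\cS|$, the goal becomes $a + |\cT| = k-1$.

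Next I would observe that $k-1$ is exactly the number of block-minima of $\Lambda$ that exceed $1$: every block has a unique minimum, there are $k$ of them, and $1$ is always one of them. The heart of the argument is a bijection from the set of block-minima $j>1$ onto $(\Arc(\Lambda)\setminus\cS)\sqcup\cT$. Given such a $j$, the integer $j-1$ lies in a block different from that of $j$ (otherwise $j$ would not be minimal). I would send $j$ to the pair $(j-1,j)\in\cT$ when $j-1$ is maximal in its block, and otherwise to the unique arc of $\Lambda$ whose smaller endpoint is $j-1$. In the second case $j-1$ is a left endpoint, so this arc is some $(j-1,\ell)$ with $\ell>j$, hence $\ell\ge (j-1)+2$; it is therefore non-superdiagonal and lands in $\Arc(\Lambda)\setminus\cS$. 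Injectivity is immediate, since the image always has smaller endpoint $j-1$, which recovers $j$; surjectivity onto $\cT$ is routine, as any $(i,i+1)\in\cT$ is the image of the block-minimum $j=i+1$.

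The main obstacle, and the only point at which noncrossingness is used, is surjectivity onto $\Arc(\Lambda)\setminus\cS$: I must show that for every arc $(p,q)$ with $q>p+1$, the integer $p+1$ is a block-minimum (it is then the preimage of $(p,q)$, since $p$ is a left endpoint and so falls into the second case). Here I would argue by contradiction. Because $(p,q)$ is an arc, no block-mate of $p$ lies strictly between $p$ and $q$, so $p+1$ lies outside the block of $p$. If $p+1$ were not a block-minimum, its block would contain some element below $p+1$, necessarily $\le p-1$; letting $u$ be the largest element of that block below $p+1$, the pair $(u,p+1)$ is an arc of $\Lambda$ with $u\le p-1<p$. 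Then $u<p<p+1<q$ shows that $(u,p+1)$ and $(p,q)$ cross, contradicting $\Lambda \in \NC(n)$. Comparing the cardinalities of the two sides of the bijection then yields $a+|\cT| = k-1$, which is exactly the desired block count for $\Lambda^+$.
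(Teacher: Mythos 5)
Your proof is correct, but it takes a genuinely different route from the paper's. The paper argues by induction on $n$: it splits into cases according to whether $\{n\}$ is a singleton block, and otherwise cuts $\Lambda$ at the arc $(m,n)$ into noncrossing partitions $A\vdash[m]$ and $B\vdash[n-1]\setminus[m]$, applying the inductive hypothesis to each piece. You instead give a direct, non-inductive bijective proof: after translating the block count into the arc count $|\Arc(\Lambda^+)|=|\Arc(\Lambda)\setminus\cS|+|\cT|$, you exhibit a bijection from the $k-1$ block-minima of $\Lambda$ exceeding $1$ onto $\Arc(\Lambda^+)$, keyed by the left endpoint $j-1$. All the steps check out: the union defining $\Arc(\Lambda^+)$ is indeed disjoint, injectivity and surjectivity onto $\cT$ are as routine as you say, and your crossing argument for surjectivity onto the non-superdiagonal arcs is exactly where the hypothesis $\Lambda\in\NC(n)$ must enter (the paper's example $\{\{1,3\},\{2,4\}\}$ fails precisely because $3$ is not a block-minimum there). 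Your approach has the advantage of isolating the role of noncrossingness in a single, transparent step and of explaining combinatorially which arc of $\Lambda^+$ each non-initial block of $\Lambda$ accounts for; the paper's induction is somewhat shorter to write and reuses the recursive structure of noncrossing partitions, but it obscures where the hypothesis is actually used.
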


The involution is not order reversing on $\NC(n)$, since for example
 \[\{\{1\},\{2,3\},\{4\}\}^+ =\{\{1,2\},\{3,4\}\} \not> \{\{1,4\},\{2\},\{3\}\} = \{\{1,4\},\{2,3\}\}^+.\]
Also, the property $|\Lambda^+| = n+1 - |\Lambda|$ may fail if $\Lambda \vdash[n]$ is not noncrossing.

\begin{proof} 
Fix $\Lambda \in \NC(n)$ with $k$ blocks.  If $\{n\}$ is a block of $\Lambda$ and $\Lambda' \in \NC(n-1)$ is the set partition formed by removing this block, then $\Lambda^+$ is given by adding $n$ to the block of $n-1$ in $(\Lambda')^+$.  Hence the number of blocks of $\Lambda^+$ is the same as the number of blocks of $(\Lambda')^+$, which by induction is $(n-1) + 1 -(k-1) = n+1-k$ as desired.  

Suppose the block of $n$ in $\Lambda$ has more than one element.  In this block, $n$ is the largest element; let $m$ be the second largest so that $(m,n) \in \Arc(\Lambda)$.  Let $A $ be the noncrossing partition of $[m]$ with arc set $\Arc(\Lambda) \cap \{ (i,j) : 1\leq i<j\leq m\}$ and let $B$ be the noncrossing  partition of $[n-1]\setminus [m]$ with arc set $\Arc(\Lambda) \cap \{(i,j) : m <i<j<n\}$.  Observe that $A\cup B$ is then a  partition of $[n-1]$, and that because $\Lambda$ is noncrossing, adding $n$ to the block of $m$ in $A\cup B$ recovers $\Lambda$.  

If $m=n-1$ then $|A|=k$ and $B = \varnothing$ and $\Lambda^+ = A^+\cup \{ \{n\}\}$,  so   the number of blocks of $\Lambda^+$ is $1+|A^+| = 1 + (n-1)+1 - |A| = n+1-k$ by induction.  Alternatively, if $m<n-1$ then 
$\Lambda^+$ is formed by adding $n$ to the block of $m$ in $A^+ \cup B^+$.  Thus the number of blocks of $\Lambda^+$ is $|A^+ \cup B^+| = m+1-|A| + (n-1-m) +1 -|B| = n+1-|A|-|B| = n+1-k$, again by induction.
\end{proof}

%


\section{Identities for classical set partitions}
\label{classical}

In this section we examine the action of $\Lin(n,\AA)$ on $\sP(n,\AA)$ and $\NC(n,\AA)$ in greater detail. To begin, we note that   
shifting the matrix of a set partition one column to the right corresponds to an injective map 
\[\label{shift}\shift : \sP(n,\AA) \to \sP(n+1,\AA)\] which assigns $\Lambda \in \sP(n,\AA)$ to the  $\AA$-labeled set partition  
of $[n+1]$ with arc set  $\{ (i,j+1) : (i,j) \in \Arc(\Lambda)\}$ and labeling map $(i,j+1) \mapsto \Lambda_{ij}$.  For example,
\[ \shift\( \xy<0.0cm,-0.0cm> \xymatrix@R=-0.0cm@C=.3cm{
*{\bullet} \ar @/^.5pc/ @{-} [r]^{a} &
*{\bullet} \ar @/^.7pc/ @{-} [rr]^{b}  &
*{\bullet} \ar @/^0.7pc/ @{-} [rr]^c &
*{\bullet} &
*{\bullet} \\
1   & 
2 &
3  &
4 &
5 
}\endxy\)
=
 \xy<0.0cm,-0.0cm> \xymatrix@R=-0.0cm@C=.3cm{
*{\bullet} \ar @/^.7pc/ @{-} [rr]^{a} &
*{\bullet} \ar @/^.9pc/ @{-} [rrr]^{b}  &
*{\bullet} \ar @/^0.9pc/ @{-} [rrr]^c &
*{\bullet} &
*{\bullet} &
*{\bullet} \\
1   & 
2 &
3  &
4 &
5 &
6}\endxy
\] for $a,b,c \in \AA\setminus \{0\}$.
The map $\shift$ increases the number of blocks of a set partition by one, and  its image consists of all $\AA$-labeled  partitions of $[n+1]$ with no blocks containing both $i$ and $i+1$ for some  $i \in [n]$; following \cite{reduction1}, we call such set partitions \emph{2-regular}.
 The right inverse of $\shift$ (defined on the matrix of a set partition by deleting the first column and last row then setting all diagonal entries to zero)
 is precisely the ``reduction algorithm'' presented in \cite{reduction1}; see also \cite{reduction2}.

We say that a set partition of $[n]$ is \emph{feasible} if each of its blocks has at least two elements and \emph{poor} if each of its blocks has at most two elements.  The matrix of a feasible set partition has a nonzero entry in either the $i$th row or $i$th column for each $i \in [n]$, while the matrix of a poor set partition never has a nonzero entry in both the $i$th row and $i$th column.
From these considerations, it is straightforward to deduce the following lemma.

\begin{lemma} \label{shift-bij}
The following restrictions of $\shift$ are bijections:
\begin{enumerate}
\item[(1)] $\{ \text{Feasible  partitions of $[n]$} \} \to \left\{ \ba &\text{2-regular  partitions $\Lambda \vdash [n+1]$ such that} \\&\text{$1+\max B \neq \min B'$ for all blocks $B,B' \in \Lambda$}\ea \right\}$.

\item[(2)] $\{ \text{Poor noncrossing  partitions of $[n]$} \}  \to \{ \text{2-regular noncrossing  partitions of $[n+1]$}\}.$

\end{enumerate}
\end{lemma}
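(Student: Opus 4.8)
The plan is to leverage the fact established just above the lemma that $\shift$ is injective with image equal to the set of all $2$-regular partitions of $[n+1]$; consequently each part amounts to checking that $\shift$ carries the named family of partitions of $[n]$ \emph{onto} the asserted family of $2$-regular partitions of $[n+1]$. Since injectivity is free, it suffices in each case to verify that a $2$-regular partition $\Gamma\vdash[n+1]$ lies in the target family if and only if its preimage $\Lambda$ under $\shift$ (computed by the reduction right inverse) lies in the source family. I would run this verification entirely in the rook-diagram picture, where feasibility and poorness have the clean descriptions recalled before the lemma: $\Lambda$ is feasible exactly when row $i$ or column $i$ of its matrix is nonzero for every $i\in[n]$, and poor exactly when row $i$ and column $i$ are never both nonzero.

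For part (1), first I would translate the condition ``$1+\max B\neq \min B'$ for all blocks $B,B'$'' of $\Gamma$ into matrix language: an element $c$ is the maximum of its block precisely when row $c$ of the matrix of $\Gamma$ is zero, and $c$ is the minimum of its block precisely when column $c$ is zero. Thus the condition fails exactly when some $c\in[n]$ has row $c$ and column $c+1$ both zero, i.e.\ the condition holds iff for every $c\in[n]$ row $c$ or column $c+1$ of $\Gamma$ is nonzero. Because $\shift$ moves column $c$ of the matrix of $\Lambda$ to column $c+1$ of the matrix of $\Gamma$ while leaving rows in place, this is equivalent to requiring that row $c$ or column $c$ of the matrix of $\Lambda$ be nonzero for every $c\in[n]$, which is exactly feasibility of $\Lambda$. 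This equivalence, together with injectivity, gives the bijection in (1).

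For part (2) the key observation is that $\shift$ always preserves crossings in the forward direction: if $(i,k),(j,l)\in\Arc(\Lambda)$ cross, then $i<j<k<l$ are strictly ordered, so the shifted arcs $(i,k+1),(j,l+1)$ again satisfy $i<j<k+1<l+1$ and cross. Hence $\shift(\Lambda)$ noncrossing forces $\Lambda$ noncrossing, for any $\Lambda$. The converse is where the main obstacle lies: $\shift$ need \emph{not} preserve noncrossingness, because two arcs of a single block that share an endpoint, say $(i,k),(k,l)$ coming from a block with at least three elements, are shifted to $(i,k+1),(k,l+1)$, and these do cross since $i<k<k+1<l+1$. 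I would isolate this shared-endpoint phenomenon as the crux and show that poorness is precisely the hypothesis that rules it out: in a poor partition every block has at most two elements, so distinct arcs never share an endpoint, and then the forward crossing analysis reverses to give that $\shift(\Lambda)$ is noncrossing iff $\Lambda$ is.

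With this in hand part (2) follows in two inclusions. For $\subseteq$, a poor noncrossing $\Lambda$ maps to a $2$-regular (automatic) partition that is noncrossing by the poor-case equivalence. For $\supseteq$, given a $2$-regular noncrossing $\Gamma\vdash[n+1]$, its $\shift$-preimage $\Lambda$ is noncrossing by forward preservation, and must moreover be poor: a block of $\Lambda$ with three or more elements would contain a shared-endpoint pair $(i,k),(k,l)$ whose images cross in $\Gamma$, contradicting that $\Gamma$ is noncrossing. Thus $\shift$ restricts to the claimed bijection. The one step demanding genuine care is the shared-endpoint analysis just described; everything else is a routine dictionary between the arc, block, and matrix descriptions.
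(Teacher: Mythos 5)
Your proposal is correct, and it fills in exactly the argument the paper intends: the paper offers no explicit proof, saying only that the lemma is ``straightforward to deduce'' from the matrix descriptions of feasible and poor partitions and from the identification of the image of $\shift$ with the $2$-regular partitions. Your part (1) is precisely the row/column dictionary the paper sets up, and your shared-endpoint analysis in part (2) is the arc-language form of the paper's observation that a poor partition never has a nonzero entry in both the $i$th row and $i$th column.
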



To count feasible and poor set partitions, we introduce the polynomials
\[  \Fe{n}{x} \omdef = 
\sum_{\text{Feasible }\Lambda \in \sP(n)} x^{|\Arc(\Lambda)|}
\qquad\text{and}\qquad
\Mo{n}{x} \omdef =\sum_{\text{Poor }\Lambda \in \NC(n)} x^{|\Arc(\Lambda)|}
.\]
We can write $\Fe{n}{x} = \sum_{k=0}^n \stirlstirl{n}{k} x^{n-k}$ where 
 $\stirlstirl{n}{k}$ is the \emph{associated Stirling number of the second kind} \cite{Comtet}, which counts the number of  feasible set partitions  of $[n]$ with $k$ blocks and is listed as sequence  \cite[A008299]{OEIS}. The numbers $\Fe{n}{1}$ themselves give sequence \cite[A000296]{OEIS}.
 
 Letting $\cC_n \omdef=  \frac{1}{n+1} \binom{2n}{n}$ as in the introduction,
 we have the following slightly more explicit expression 
for $\Mo{n}{x}$. The functions $\Mo{n}{x}$ are  sometimes called {Motzkin polynomials} (see \cite[A055151]{OEIS}) and  $\Mo{n}{1}$ is the $n$th Motzkin number \cite[A001006]{OEIS}.  

\begin{proposition}\label{motzkin-obs} $\ds\Mo{n}{x}=\sum_{k=0}^{\lfloor n/2 \rfloor} \cC_{k} \binom{n}{2k} x^{k}$ for $n\geq 0$.

\end{proposition}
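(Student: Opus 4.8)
The plan is to prove this by a direct combinatorial decomposition of the poor noncrossing partitions according to which points are matched. First I would observe that a \emph{poor} partition of $[n]$ consists entirely of singleton blocks and two-element blocks, and that each two-element block $\{i,j\}$ with $i<j$ contributes exactly one arc $(i,j)$ to $\Arc(\Lambda)$ while singletons contribute none. Thus a poor partition with $k$ arcs is precisely a partial matching of $[n]$ using $2k$ of the points, and $|\Arc(\Lambda)|$ equals its number of two-element blocks; imposing the noncrossing condition makes it a \emph{noncrossing} partial matching. Consequently the coefficient of $x^k$ in $\Mo{n}{x}$ is exactly the number of noncrossing partial matchings of $[n]$ with $k$ arcs, and the index $k$ ranges only up to $\lfloor n/2 \rfloor$ since no such matching can use more than $\lfloor n/2 \rfloor$ disjoint pairs.

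Next I would stratify these matchings by the set $S \subseteq [n]$ of matched points. Any poor noncrossing partition with $k$ arcs determines a subset $S$ of size $2k$ together with the restriction of its arcs to $S$, which is a perfect matching of $S$. Conversely, given any $2k$-subset $S$ and any noncrossing perfect matching on $S$, declaring the remaining $n-2k$ points to be singletons recovers a unique poor noncrossing partition. The one point to verify is that the noncrossing condition is insensitive to the ambient set: whether two arcs $(i,k),(j,l)$ cross depends only on the relative order of their four endpoints, and singletons never lie on an arc, so a matching on $S$ is noncrossing inside $[n]$ if and only if it is noncrossing when $S$ is regarded as a linearly ordered set in its own right. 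This makes the correspondence a bijection.

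It then remains to count, for a fixed $S$ with $|S|=2k$, the noncrossing perfect matchings of $S$. Since noncrossing-ness depends only on the induced linear order, this number agrees with the number of noncrossing perfect matchings of a $2k$-element chain, which is the classical Catalan count $\cC_k$. Summing over the $\binom{n}{2k}$ choices of $S$ yields $\cC_k\binom{n}{2k}$ noncrossing partial matchings with $k$ arcs, and therefore $\Mo{n}{x} = \sum_{k=0}^{\lfloor n/2\rfloor} \cC_k \binom{n}{2k} x^k$.

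The only nonroutine ingredient is the identification of $\cC_k$ as the number of noncrossing perfect matchings of $2k$ points, which I would either cite as a standard Catalan interpretation or establish by the usual recursion on the arc emanating from the first point; I expect this Catalan evaluation, rather than the bookkeeping, to be the crux, as the rest is an elementary bijection. As an alternative route one could instead invoke part $(2)$ of Lemma \ref{shift-bij} to transfer the enumeration to $2$-regular noncrossing partitions of $[n+1]$, but the direct subset-decomposition above seems the most transparent.
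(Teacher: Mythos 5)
Your argument is correct and is essentially identical to the paper's proof: both decompose a poor noncrossing partition into the $2k$-subset $S$ of matched points together with a noncrossing perfect matching of $S$, giving $\binom{n}{2k}$ choices for $S$ and $\cC_k$ for the matching (the paper cites Stanley, Exercise 6.19o, for the Catalan count). Your additional remark that crossing depends only on the relative order of endpoints is a sensible elaboration of a detail the paper leaves implicit.
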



\begin{proof}
Poor noncrossing set partitions of $[n]$ with $k$ arcs 
 are  in bijection with pairs $(S,\Lambda)$, where $S$ is a $2k$-subset of $[n]$ and  $\Lambda$ is a noncrossing partition of  $[2k]$ with $k$ blocks of size two.  There are $\binom{n}{2k}$ choices for $S$ and $\cC_k$ choices for $\Lambda$ (see \cite[Exercise 6.19o]{Stanley}).
 %
\end{proof}

To state the main theorem of this section, we require one last piece of notation.  Define $\Cov(\Lambda)$ for a set partition $\Lambda$ to be the set of arcs $(i,j) \in \Arc(\Lambda)$ with $j=i+1$; in other words, 
\[\Cov(\Lambda)  = \Arc(\Lambda) \cap \{(i,i+1) : i \in \ZZ\}.\]
%
%
%
%
%
%
%
%
Fix two  additive abelian groups $\AA$ and $\BB$, and  let  $\sP(n,\AA,\BB)$ denote the set of labeled partitions $\Lambda  \in \sP(n,\AA \oplus \BB)$ with 
\be\label{ab-cond}\Lambda_{ij} \in \begin{cases} \AA\setminus\{0\},&\text{if }(i,j) \in \Arc(\Lambda)\setminus \Cov(\Lambda), \\
\BB\setminus\{0\},&\text{if }(i,j) \in \Cov(\Lambda).\end{cases}\ee
We define $\NC(n,\AA,\BB)$ analogously, as the set of noncrossing elements of $\sP(n,\AA,\BB)$.
Note that the group $\Lin(n,\BB)$ acts on these sets by $+$, and that we may view $\shift$ 
as 
a map $\sP(n,\AA) \to \sP(n+1,\AA,\BB)$.

When  $|\AA| =x+1$ and $ |\BB|=y+1$, the cardinalities of $\sP(n,\AA,\BB)$ and $\NC(n,\AA,\BB)$ are given respectively by the polynomials
\[ \Bell{n}{x,y}  \omdef= \sum_{\Lambda \in \sP(n)} x^{|\Arc(\Lambda)\setminus \Cov(\Lambda)|} y^{|\Cov(\Lambda)|} 
\quad\text{and}\quad \Cat{n}{x,y}  \omdef=  \sum_{\Lambda \in \NC(n)} x^{|\Arc(\Lambda)\setminus \Cov(\Lambda)|} y^{|\Cov(\Lambda)|}
.\]  
We will derive more explicit expressions for these functions in a moment.
In the mean time, let $\Bell{n}{x} \omdef= \Bell{n}{x,x}$ and $\Cat{n}{x} \omdef= \Cat{n}{x,x}$.  These simpler polynomials have the formulas \[\Bell{n}{x}  =
\sum_{k=0}^n \stirl{n}{k} x^{n-k}\quad\text{and}\quad 
\Cat{n}{x}   = \sum_{k=0}^n N(n,k) x^{n-k},\]  where $\stirl{n}{k}$ and $N(n,k)$ are the {Stirling numbers of the second kind} and the {Narayana numbers}, defined as the number of ordinary and noncrossing set partitions of $[n]$ with $k$ blocks (equivalently, with $n-k$ arcs). We note the well-known formula $N(n,k) = \frac{1}{n} \binom{n}{k} \binom{n}{k-1}$ for $n> 0$ and adopt the convention $\stirl{0}{k} = N(0,k) = \delta_{k}$. 

\def\BINOMIAL{\mathrm{BINOMIAL}}
\def\INVERT{\mathrm{INVERT}}
\def\R{\mathrm{R}}

\begin{remarks}
Of course, $\Bell{n}{1}$ is the $n$th {Bell number}  and $\Cat{n}{x}$ is  the \emph{Narayana polynomial}, whose values give
 the Catalan numbers    when $x=1$ and the 
little Schr\"oder numbers 
when $x=2$.  The polynomials $\Bell{n}{x}$ and $\Cat{n}{x}$ are by definition  the rank generating functions of the graded lattices $\sP(n)$ and $\NC(n)$, though they have several alternate interpretations:

\begin{itemize}

\item[(i)]  As noted in \cite{BerSlo,Coker_Fam}, when $x$ is a positive integer,  $\{ \Bell{n}{x} \}_{n=0}^\infty$ and $\{ \Cat{n}{x} \}_{n=0}^\infty$ are the unique sequences respectively fixed by the operators 
\[\R\circ \underbrace{\BINOMIAL \circ \cdots\circ  \BINOMIAL}_{x\text{ times}}\qquad\text{and}\qquad\R\circ \underbrace{\INVERT \circ \cdots\circ  \INVERT}_{x\text{ times}},\] where $\R(a_0,a_1,a_2,\dots) \omdef= (1,a_0,a_1,a_2,\dots)$ and $\BINOMIAL$ and $\INVERT$ are the sequence operators defined in \cite{BerSlo}. 

\item[(ii)]  As mentioned in the introduction, the $\FF_q$-labeled partitions of $[n]$ index the supercharacters of  the unitriangular group $\UT_n(\FF_q)$. 
There are  $\Bell{n}{q-1}$ distinct supercharacters of $\UT_n(\FF_q)$, of which $\Cat{n}{q-1}$ are irreducible; see   Section \ref{last-sect}.

\item[(iii)] If $G$ is a finite group then $\Bell{2k}{|G|}$ is the dimension of the $G$-colored partition algebra $P_k(x;G)$ defined in \cite{Bloss} and studied (in the case $G = \ZZ/r\ZZ$) in \cite{Orellana}.


\end{itemize}
\end{remarks}


The following theorem shows that 
$\Bell{n}{x}$   counts the  $\Lin(n+1,\BB)$-orbits in $\sP(n+1,\AA,\BB)$ while $\Mo{n}{x}$ counts the $\Lin(n+1,\BB)$-orbits in $\NC(n+1,\AA,\BB)$.

\begin{theorem}\label{main-thm} For $n\geq 1$,
the correspondence $\Lambda \mapsto \{ \alpha + \shift(\Lambda) : \alpha \in \Lin(n,\BB)\}$ 
  is a bijection 
\[ \ba \sP(n-1,\AA)&\to
\left\{  \Lin(n,\BB)
\text{-orbits in }\sP(n,\AA,\BB)\right\},
  \\
 \left \{ \text{Poor elements of $\NC(n-1,\AA)$}\right\} &\to  
\left\{ \Lin(n,\BB)\text{-orbits in } \NC(n,\AA,\BB)\right\}.
 \ea
  \] 
Furthermore,  the cardinality of the $\Lin(n,\BB)$-orbit of $\shift(\Lambda)$ is $|\BB|^s$, where $s$ is the number of singleton blocks of $\Lambda \in \sP(n-1,\AA)$.
%
%
\end{theorem}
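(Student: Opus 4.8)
The plan is to make the $\Lin(n,\BB)$-orbit structure completely explicit and then read off both the bijections and the orbit sizes. First I would invoke Observation~\ref{+obs} (or equivalently Observation~\ref{+def2}): since adding an element of $\Lin(n,\BB)$ alters only the covering arcs $(j,j+1)$ and never touches the non-covering arcs, two elements of $\sP(n,\AA,\BB)$ can lie in the same orbit only if they have identical non-covering arcs together with their $\AA$-labels. Thus the non-covering arc data is an orbit invariant.

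I would then call a superdiagonal position $(j,j+1)$ with $j\in[n-1]$ \emph{free} (relative to a fixed collection of non-covering arcs) if $j$ is not the left endpoint of any non-covering arc and $j+1$ is not the right endpoint of any non-covering arc; in the matrix picture of Observation~\ref{+def2} this says that row $j$ and column $j+1$ contain no off-superdiagonal entry. Next I would check that the action can assign an arbitrary label in $\BB$ to the covering arc at each free position, independently, whereas at a non-free position step~2 of Observation~\ref{+def2} forces the superdiagonal entry to zero. Consequently each orbit is determined precisely by its non-covering arcs, contains a unique element with no covering arcs at all (take every free label to be $0$), and has cardinality $|\BB|^{f}$, where $f$ is the number of free positions.

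With this in hand the bijections follow quickly. The covering-arc-free representative of an orbit in $\sP(n,\AA,\BB)$ is exactly an $\AA$-labeled $2$-regular partition of $[n]$, and since the image of $\shift$ is the set of all such partitions, it equals $\shift(\Lambda)$ for a unique $\Lambda\in\sP(n-1,\AA)$; this makes $\Lambda\mapsto\{\alpha+\shift(\Lambda):\alpha\in\Lin(n,\BB)\}$ a bijection onto orbits, since distinct $\Lambda$ give distinct covering-arc-free elements, hence distinct orbits, and every orbit arises. For the noncrossing statement I would first note that $\Lin(n,\BB)$ preserves $\NC(n,\AA,\BB)$ because covering arcs never participate in a crossing, so the orbit analysis restricts verbatim; the covering-arc-free representative is now a $2$-regular noncrossing partition, which by Lemma~\ref{shift-bij}(2) is $\shift$ of a unique poor noncrossing partition of $[n-1]$.

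Finally I would compute $f$ for $\Gamma=\shift(\Lambda)$ by pulling the free-position condition back through $\shift$. Since $\shift$ sends each arc $(i,m)$ of $\Lambda$ to $(i,m+1)$, the left endpoints of $\Gamma$ are the left endpoints of $\Lambda$ and the right endpoints of $\Gamma$ are those of $\Lambda$ shifted up by one; hence $(j,j+1)$ is free exactly when $j$ is neither a left nor a right endpoint of $\Lambda$, i.e.\ when $j$ is simultaneously maximal and minimal in its block, i.e.\ when $\{j\}$ is a singleton block. Thus $f$ equals the number $s$ of singleton blocks of $\Lambda$, and the orbit size is $|\BB|^{s}$. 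The only delicate points are verifying the independence of the free positions under the action (cleanest in the matrix formulation of Observation~\ref{+def2}) and the endpoint bookkeeping in this last translation; everything else is a direct consequence of the explicit description of $+$.
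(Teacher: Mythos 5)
Your proof is correct and follows essentially the same route as the paper's: both identify the unique $2$-regular (covering-arc-free) representative of each orbit as $\shift(\Lambda)$, invoke Lemma \ref{shift-bij} for the noncrossing case, and compute the orbit size by counting the free superdiagonal positions (what the paper calls empty superdiagonal hooks) and translating these into singleton blocks of $\Lambda$. Your write-up is slightly more explicit about why covering arcs can only occur at free positions and why the labels there vary independently, but the substance is identical.
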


\begin{proof}
It is clear from our discussion of the action $+$ that each $\Lin(n,\BB)$-orbit in $\sP(n,\AA,\BB)$ contains a unique 2-regular element 
 and which is consequently of the form $\shift(\Gamma)$ for a unique $\Gamma \in \sP(n-1,\AA)$.  
This shows that the first map is a bijection; the second map is also because $\shift(\Gamma)$ is noncrossing if and only if $\Gamma$ is poor and noncrossing, as a consequence of Lemma \ref{shift-bij}.

If $\Lambda = \shift(\Gamma)$, then $\Arc(\alpha) \cap \Arc(\Lambda) = \varnothing$ for all $\alpha \in \Lin(n,\BB)$, and one sees directly from Definition \ref{+def}
that 
the $\Lin(n,\BB)$-orbit of $\Lambda$ has size $|\BB|^s$ where $s$ is the number of superdiagonal hooks $\{(i,j+1) : i < j \} \cup \{(j,k): j<k\}$ for $j \in [n-1]$ which contain no nonzero entries in the matrix of $\Lambda$.  Consulting the definition of $\shift$, we find that the hook containing $(j,j+1)$ belongs to this set if and only if $\{j\}$ is a singleton block of $\Gamma$.
\end{proof}

The following theorem uses the previous result to derive two equivalent formulas for each of the polynomials $\Bell{n}{x,y}$ and $\Cat{n}{x,y}$.

\begin{theorem}\label{identities}
If $n$ is a nonnegative integer then the following identities hold:
\begin{enumerate}
\item[(1)]
$\Bell{n+1}{x,y} =\ds \sum_{k=0}^{n} \binom{n}{k} \Bell{k}{x} y^{n-k} = 
 \sum_{k=0}^n \binom{n}{k}\Fe{k}{x} (y+1)^{n-k}$.

\item[(2)]
$\Cat{n+1}{x,y} = \ds\sum_{k=0}^n \binom{n}{k} \Mo{k}{x} y^{n-k}
=
\sum_{k=0}^{\lfloor n/2\rfloor} \cC_k \binom{n}{2k} x^k  (y+1)^{n-2k}$.
\end{enumerate}
\end{theorem}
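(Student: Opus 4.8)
The plan is to deduce everything from the orbit bijections of Theorem \ref{main-thm}, by first rewriting $\Bell{n+1}{x,y}$ and $\Cat{n+1}{x,y}$ as a single ``master sum'' over (poor noncrossing) partitions of $[n]$, and then reorganizing that sum in two different ways to read off the two formulas in each part.

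Throughout I would fix abelian groups with $|\AA| = x+1$ and $|\BB| = y+1$. Since $\AA$ and $\BB$ may be taken cyclic of any order, it suffices to prove each identity for all nonnegative integers $x,y$, whence it holds as a polynomial identity. First I would establish the master formulas
\[ \Bell{n+1}{x,y} = \sum_{\Lambda \vdash [n]} x^{|\Arc(\Lambda)|} (y+1)^{s(\Lambda)} \qquad\text{and}\qquad \Cat{n+1}{x,y} = \sum_{\substack{\Lambda \vdash [n] \\ \text{poor noncrossing}}} x^{|\Arc(\Lambda)|}(y+1)^{s(\Lambda)}, \]
where $s(\Lambda)$ denotes the number of singleton blocks of $\Lambda$. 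These follow directly from Theorem \ref{main-thm}: since $\Bell{n+1}{x,y} = |\sP(n+1,\AA,\BB)|$, decomposing into $\Lin(n+1,\BB)$-orbits and using that the orbit of $\shift(\Gamma)$ has size $(y+1)^{s(\Gamma)}$ gives $\Bell{n+1}{x,y} = \sum_{\Gamma \in \sP(n,\AA)} (y+1)^{s(\Gamma)}$. Grouping the labeled partitions $\Gamma$ by their underlying unlabeled partition $\Lambda \vdash [n]$ (there being $x^{|\Arc(\Lambda)|}$ of the former for each of the latter, with $s(\Gamma) = s(\Lambda)$) then yields the claim, and the poor-noncrossing half of the bijection gives the formula for $\Cat{n+1}{x,y}$.

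Next I would extract the four formulas from these two master sums by combinatorial bookkeeping, using repeatedly that deleting singleton blocks changes neither the arc set nor the noncrossing/poorness of a partition. To get the first formula in each part, expand $(y+1)^{s(\Lambda)} = \sum_{T} y^{|T|}$ over subsets $T$ of the singleton blocks of $\Lambda$; a pair $(\Lambda,T)$ then amounts to a choice of $T \subseteq [n]$ together with an arbitrary (resp.\ poor noncrossing) partition of the complement $[n]\setminus T$, which resums to $\sum_{k=0}^n \binom{n}{k} y^{n-k}\Bell{k}{x}$ (resp.\ $\sum_{k=0}^n \binom{n}{k} y^{n-k}\Mo{k}{x}$). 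To get the second formula, instead group each $\Lambda$ by its set of non-singleton elements: if this set has size $k$, its induced partition is feasible, while the $n-k$ omitted elements are singletons contributing $(y+1)^{n-k}$, giving $\sum_{k} \binom{n}{k}\Fe{k}{x}(y+1)^{n-k}$. In part (2) the induced partition is a poor noncrossing partition with no singletons, i.e.\ a noncrossing matching, forcing the non-singleton set to have even size $2k$; since the noncrossing matchings of a $2k$-set number $\cC_k$ and carry weight $x^{k}$ (Proposition \ref{motzkin-obs}), this yields $\sum_{k} \cC_k \binom{n}{2k} x^{k} (y+1)^{n-2k}$.

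The genuinely substantive step is the passage to the master sums, since all the structural work already lives in Theorem \ref{main-thm}; the remaining manipulations are routine. The points that require care are therefore the orbit-to-polynomial bookkeeping — correctly summing over labelings and invoking the polynomial method to upgrade the integer-point identities to polynomial identities — and verifying that the two reorganizations are genuine bijections, in particular that inserting or deleting isolated points never creates or destroys an arc among the surviving elements and preserves the noncrossing and poorness conditions used in part (2).
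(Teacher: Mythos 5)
Your proposal is correct and follows essentially the same route as the paper: both rest entirely on the orbit decomposition of Theorem \ref{main-thm} together with the observation that the two sums in each part stratify $\sP(n+1,\AA,\BB)$ (resp.\ $\NC(n+1,\AA,\BB)$) by superdiagonal support and by orbit size. Your ``master sum'' over orbit representatives with the subsequent binomial expansion of $(y+1)^{s(\Lambda)}$ is just a repackaging of the paper's direct count of partitions with a prescribed number of nonzero superdiagonal entries, so the two arguments coincide in substance.
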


\begin{proof}
In each part, the terms in first sum counts partitions  whose matrices have  $n-k$ nonzero entries on the superdiagonal $\{ (i,i+1) : i\in [n]\}$, while the terms in the second sum count partitions whose $\Lin(n+1,\BB)$-orbits have the same fixed size.  Thus the sums in each part are necessarily equal to each other and to $\Bell{n+1}{x,y}$ in (1) and $\Cat{n+1}{x,y}$ in (2).

In detail, $\binom{n}{k} \Bell{k}{x} y^{n-k}$ is 
the number of  elements of $\sP(n+1,\AA,\BB)$ whose matrices have  $n-k$ nonzero entries on the superdiagonal since there are $\binom{n}{k} y^{n-k}$ choices for the entries and their positions, and since deleting the rows and columns containing these positions produces  the matrix of $\shift(\Lambda)$ for an arbitrary $\Lambda \in \sP(k,\AA)$.  
On the other hand, $\binom{n}{k}\Fe{k}{x} (y+1)^{n-k}$ is the number of elements of $\sP(n+1,\AA,\BB)$ whose $\Lin(n+1,\BB)$-orbits have size $(y+1)^{n-k}$ by Theorem \ref{main-thm}, since there are $\binom{n}{k} \Fe{k}{x}$ distinct $\AA$-labeled set partitions of $[n]$ with $n-k$ singleton blocks. 

Likewise, $\binom{n}{k} \Mo{k}{x} y^{n-k}$ is 
the number of  elements of $\NC(n+1,\AA,\BB)$ whose matrices have  $n-k$ nonzero entries on the superdiagonal since deleting the rows and columns containing these positions produces  the matrix of $\shift(\Lambda)$ for an arbitrary poor $\Lambda \in \NC(k,\AA)$.  
By Theorem \ref{main-thm}, the number of elements of $\NC(n+1,\AA,\BB)$ whose $\Lin(n+1,\BB)$-orbits have size $(y+1)^{n-k}$ is equal to the number of poor elements of $\NC(n,\AA)$ with $n-k$ singleton blocks,
which is the product of $\binom{n}{n-k}$ with the number of partitions in $\NC(k,\AA)$ whose blocks all have size two.  The latter number is clearly 0 if $k$ is odd, and is equal to $x^{k/2}$ times the leading coefficient of $\Mo{k}{x}$ if $k$ is even. 
\end{proof}

\begin{remarks} Both parts of the theorem deserve a few comments.
\begin{enumerate} 
\item[(i)] Setting $y=0$ in the first part shows that $\Bell{n}{x}$ is the binomial transform of $\Fe{n}{x}$; i.e.,  $\Bell{n}{x} = \sum_{k} \binom{n}{k} \Fe{k}{x}$ (see \cite[Lemma 5.2]{M_normal} for a combinatorial proof). 
 Setting $x=y$ in (1) yields 
$\Bell{n+1}{x} =\sum_{k=0}^n \binom{n}{k} \Bell{k}{x} x^{n-k},$ an identity noted several places previously \cite{KerberFollow,Rogers}, which is equivalent to the standard recurrence formula for the Touchard polynomials.

\item[(ii)] We may rewrite the second  part  as the equation
\[ \sum_{i=0}^{\lfloor n/2\rfloor} \sum_{j=0}^{n-2i} \frac{1}{n+1} \binom{n+1}{i+1} \binom{n-i}{j} \binom{n-i-j}{i} x^{i} y^j
=
\sum_{k=0}^{\lfloor n/2\rfloor} \cC_k \binom{n}{2k} x^k (y+1)^{n-2k}.\]
 Chen, Deutsch, and Elizalde give a  combinatorial proof of this identity using a correspondence between plane trees and 2-Motzkin paths \cite[Theorem 9]{CDE}.
 Setting $x=y$ in part (2) produces Coker's identity (\ref{intro1}) mentioned in the introduction
and taking $x=y=1$ recovers Touchard's classical identity $\cC_{n+1} = \sum_k \cC_k \binom{n}{2k} 2^{n-2k}$.
 \end{enumerate}
\end{remarks}

%
%
%

%

As one consequence of the theorem, we employ the inclusion-exclusion principle to compute  alternate formulas counting the invariant elements in $\sP(n,\AA,\BB)$ and $\NC(n,\AA,\BB)$.

\begin{corollary} \label{main-cor}
For each integer $n\geq 0$, the following identities hold:
\begin{enumerate}
\item[(1)] 
$\ds \sum_{k=0}^n (-1)^{n-k} \binom{n}{k}  (y+1)^{n-k} \Bell{k+1}{x,y} =\Fe{n}{x}$.

\item[(2)] $\ds\sum_{k=0}^n (-1)^{n-k} \binom{n}{k} (y+1)^{n-k}  \Cat{k+1}{x,y}= \begin{cases} x^{n/2} \cC_{n/2},&\text{if $n$ is even,}
\\
0, &\text{if $n$ is odd.}
\end{cases}$
\end{enumerate}

%
%
%
  \end{corollary}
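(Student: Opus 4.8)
The plan is to recognize both identities as instances of binomial inversion, using the second (closed) formulas for $\Bell{n+1}{x,y}$ and $\Cat{n+1}{x,y}$ supplied by Theorem~\ref{identities}. Write $z = y+1$. Theorem~\ref{identities}(1) asserts $\Bell{n+1}{x,y} = \sum_{k=0}^n \binom{n}{k} \Fe{k}{x}\, z^{n-k}$, and Theorem~\ref{identities}(2) asserts $\Cat{n+1}{x,y} = \sum_{k=0}^{\lfloor n/2\rfloor} \cC_k \binom{n}{2k} x^k z^{n-2k}$. Both right-hand sides have the shape of a $z$-weighted binomial transform $g(n) = \sum_{k=0}^n \binom{n}{k} z^{n-k} f(k)$, and the corollary simply computes the inverse transform in each case.

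First I would record the inversion lemma: for any sequence $(f(k))$ and any scalar $z$, if $g(n) = \sum_{k=0}^n \binom{n}{k} z^{n-k} f(k)$ for all $n$, then $f(n) = \sum_{k=0}^n (-1)^{n-k} \binom{n}{k} z^{n-k} g(k)$. This is the inclusion--exclusion principle in its binomial guise: substituting the first relation into the candidate inverse and using $\binom{n}{j}\binom{j}{k} = \binom{n}{k}\binom{n-k}{j-k}$ together with $\sum_{i=0}^{m} (-1)^{m-i}\binom{m}{i} = \delta_{m,0}$ collapses the double sum to $f(n)$. Equivalently, the lower-triangular matrices $\big[\binom{n}{k}z^{n-k}\big]$ and $\big[(-1)^{n-k}\binom{n}{k}z^{n-k}\big]$ are mutually inverse; the factor $z^{n-k}$ rides along harmlessly since it telescopes as $z^{n-j}z^{j-k}=z^{n-k}$.

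For part (1) I would apply the lemma with $g(n) = \Bell{n+1}{x,y}$ and $f(k) = \Fe{k}{x}$; the conclusion is exactly the claimed formula. For part (2) the key step is to read the sum $\sum_{k=0}^{\lfloor n/2\rfloor} \cC_k \binom{n}{2k} x^k z^{n-2k}$ as the $z$-weighted binomial transform of the sequence $f$ defined by $f(2m) = \cC_m x^m$ and $f(2m+1) = 0$; the reindexing $j = 2k$ shows $\Cat{n+1}{x,y} = \sum_{j=0}^n \binom{n}{j} z^{n-j} f(j)$. Applying the lemma with $g(n) = \Cat{n+1}{x,y}$ then returns $f(n)$, which is $x^{n/2}\cC_{n/2}$ when $n$ is even and $0$ when $n$ is odd, precisely the stated case split.

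The only genuinely non-routine point is this last recognition: the Catalan--Narayana identity, whose sum runs only to $\lfloor n/2\rfloor$ and carries the factor $(y+1)^{n-2k}$, must be rewritten as an honest binomial transform in which the odd-index terms of the transformed sequence vanish. Once that reindexing is in place, both parts are immediate consequences of the single inversion lemma, and no further computation is needed.
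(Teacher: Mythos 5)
Your proof is correct, and it reaches the corollary by a different (more formal) route than the paper. You treat both identities as instances of the weighted binomial inversion
\[
g(n)=\sum_{k=0}^n\binom{n}{k}z^{n-k}f(k)\ \text{ for all }n
\quad\Longleftrightarrow\quad
f(n)=\sum_{k=0}^n(-1)^{n-k}\binom{n}{k}z^{n-k}g(k),
\]
applied to the closed formulas in Theorem \ref{identities}; your inversion lemma is standard and your verification (including the reindexing $j=2k$ that turns the Narayana sum into an honest transform of a sequence vanishing in odd degree) is sound. The paper instead runs inclusion--exclusion directly on the labeled set partitions: for $S\subset[n]$ it forms the union $\cX_S$ (resp.\ $\cY_S$) of the $\Lin(n,\BB)$-orbits of $\shift(\Lambda)$ over those $\Lambda$ having a singleton block $\{i\}$ for each $i\in S$, computes $|\cX_S|=(y+1)^{|S|}\Bell{n-|S|+1}{x,y}$ and $|\cY_S|=(y+1)^{|S|}\Cat{n-|S|+1}{x,y}$, and identifies the alternating sum as the count of $\Lin(n,\BB)$-\emph{invariant} elements, namely the feasible partitions ($\Fe{n}{x}$) and the noncrossing partitions with all blocks of size two ($x^{n/2}\cC_{n/2}$ or $0$). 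The two arguments are of course two faces of the same computation, since binomial inversion \emph{is} inclusion--exclusion; what your version buys is brevity and independence from the group action once Theorem \ref{identities} is in hand, while the paper's version buys a combinatorial interpretation of the right-hand sides as counting orbits of size one, which is the point the surrounding text wants to emphasize.
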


\begin{remark}
Setting $x=y$ in  part (2) recovers one of the three identities given by Mansour and Sun in \cite[Theorem 1.1]{MansourSun}. This equation with $x=y=1$ appeared  earlier as \cite[Proposition 2.2]{BSS} and has been studied in a number of places;  Chen, Wang, and Zhao provide a nice bibliography in the discussion preceding \cite[Theorem 2.4]{CWZ}.
\end{remark}

\begin{proof}
Given a subset $S \subset [n]$, let
 $\cX_S$ and $\cY_S$ denote the unions of the $\Lin(n,\BB)$-orbits
 of $\shift(\Lambda)$ for all partitions $\Lambda$ in $\sP(n,\AA)$ and $\NC(n,\AA)$, respectively,
which contain the singleton $\{i\}$ as a block for each $i \in S$. It is straightforward to compute from Theorems \ref{main-thm} and \ref{identities} that
\[
|\cX_S| = (y+1)^{|S|} \Bell{n-|S|+1}{x,y}\qquad\text{and}\qquad
|\cY_S| = (y+1)^{|S|} \Cat{n-|S|+1}{x,y}.\]
The inclusion-exclusion principle now affords our result since
by Theorem \ref{main-thm} the sets of $\Lin(n,\BB)$-invariants in $\sP(n+1,\AA,\BB)$ and $\NC(n+1,\AA,\BB)$ are the respective complements of $\bigcup_{i \in [n]}\cX_{\{ i\}}$ and 
$\bigcup_{i \in [n]}\cY_{\{ i\}}$.
\end{proof}

\section{A short digression on nonnesting partitions}
\label{nonnesting}

\def\nc{\text{\emph{uncross}}}
\def\ncb{\nc_{\mathrm{B}}}
\def\nn{{unnest}}

In the next section we introduce type B and D analogues for the sets $\sP(n,\AA)$ and $\NC(n,\AA)$.  Before studying these new families of set partitions, 
it will be useful to prepare the way with some requisite notation.

To this end, we recall that a set partition $\Lambda$ is \emph{nonnesting} if no two arcs $(i,l),(j,k) \in \Arc(\Lambda)$ have   
 $i < j < k < l$. Visually, this means that no arc ``nests'' beneath another  in $\Lambda$'s standard representation.  Let $[\pm n] = \{\pm 1,\pm 2,\dots, \pm n\}$ and write $-\Lambda$ for the set partition whose blocks are $-B$ for $B \in \Lambda$.
Now define 
\[\ba  \NN(n) &\omdef=\text{the set of nonnesting  partitions of $[n]$}, \\
\NNB(n) &\omdef=\text{the set of nonnesting  partitions $\Lambda$ of $[\pm n]$ with $\Lambda =- \Lambda$.}
\ea\]
%
The elements of $\NN(n+1)$ and $\NNB(n)$ can be viewed as the type $A_n$ and $B_n$ instances of a more general object called a ``nonnesting partition'' with many interesting properties.   We direct the reader to \cite{memoir,Athanasiadis} for a more detailed exposition, as here we shall only discuss  a few basic facts.

To begin, there is a close relationship between $\NN(n)$ and $\NC(n)$: the number of nonnesting and noncrossing set partitions of $[n]$ are equidistributed by type \cite[Theorem 3.1]{Athanasiadis}, where the \emph{type} of a set partition is the  partition of the number $n$ whose parts are the sizes of the set partition's blocks.  
The following simple bijection 
$\nc : \NN(n) \to \NC(n)$ is not type-preserving but will be of some use later.  We define this by the algorithm below (see also \cite[Section 5.1]{memoir}):
%
%
\begin{enumerate}
\item[1.] Given any set partition $\Lambda$, let $A = \Arc(\Lambda)$.
\item[2.] While $A$ has at least one pair of crossing arcs: choose two arcs $(i,k),(j,l) \in A$ with $i<j<k<l$ and replace $A$ with $\(A - \{(i,k),(j,l)\} \)\cup \{ (i,l),(j,k) \}$ .
\item[3.] Define $\nc(\Lambda)$ as the noncrossing set partition of $[n]$ with arc set $A$. 
\end{enumerate}
Note that this algorithm makes sense for any partition of a finite set of integers.
The procedure  locally converts each crossing to a nesting in the standard representation of $\Lambda$; i.e.,
\[ 
\xy<0.0cm,.25cm> \xymatrix@R=0.5cm@C=.5cm{
*{} & 
*{}  &
*{}  
\\
*{} \ar @/^.6pc/ @{-} [rru]   & 
*{}  &
*{} \ar @/_.6pc/ @{-} [llu] 
}\endxy
\qquad\text{is locally replaced with}
\qquad
\xy<0.0cm,0.25cm> \xymatrix@R=0.5cm@C=.5cm{
*{} \ar @/^.2pc/ @{-} [rr] & 
*{}  &
*{}  
\\
*{}   & 
*{}  &
*{} \ar @/_1.1pc/ @{-} [ll] 
}\endxy
\]
so that, e.g., $\nc\( 14|25|36\) = 16|25|34$ and $\nc(\Lambda) = \Gamma$ in (\ref{std-ex}).
This observation makes clear that the algorithm's output has 
 no dependence on the order in which the pairs of crossing arcs are chosen in the second step. Thus $\nc$ gives a well-defined map $\sP(n) \to \NC(n)$, the  important properties of which are the following:
 \begin{enumerate}
 \item[$\bullet$] $\nc(\Lambda)$ has the same number of blocks as $\Lambda$.
 \item[$\bullet$] $\nc$ defines a bijection from  $\NN(n) \to \NC(n)$.
 \item[$\bullet$] $\nc$ defines a bijection     $\NNB(n)\to \{\text{Noncrossing  $\Lambda\vdash [\pm n]$ with $\Lambda = -\Lambda$}\}$.
 \end{enumerate}
The first property is clear since $\nc(\Lambda)$ has the same number of arcs as $\Lambda$;  the second property is well-known; and the third follows from the second since $\nc(-\Lambda) = -\nc(\Lambda)$.

We see from this discussion that $\NN(n)$ has $N(n,k)$ elements with $k$ blocks and $\cC_n$ elements in total. The following observation lists analogous statistics for $\NNB(n)$. 

\begin{lemma} \label{obs1}
$\NNB(n)$ has $\binom{n}{k}^2$ elements with $2k$ or $2k+1$ blocks, and $\binom{2n}{n}$ elements in total.
\end{lemma}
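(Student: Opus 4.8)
The plan is to transport the problem to noncrossing partitions, where symmetry becomes extremely rigid, and then reduce to a binomial identity.

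\medskip
\noindent\textbf{Reduction via $\nc$.} By the third bulleted property above, $\nc$ restricts to a bijection from $\NNB(n)$ onto the set of noncrossing $\Lambda \vdash [\pm n]$ with $\Lambda = -\Lambda$, and by the first property it preserves the number of blocks. Hence it suffices to prove both assertions for \emph{symmetric noncrossing} partitions, i.e.\ noncrossing $\Lambda \vdash[\pm n]$ with $\Lambda = -\Lambda$, counted by their number of blocks.

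\medskip
\noindent\textbf{Structural decomposition.} The key claim I would establish is that \emph{every block meeting both signs is self-paired}. Indeed, if a block $B$ contains elements of both signs, then so does $-B$; and if $B \neq -B$, the unique arc of $B$ joining its largest negative to its smallest positive element and the corresponding arc of $-B$ form a crossing, contradicting that $\Lambda$ is noncrossing. Thus $\Lambda$ consists of mirror pairs $\{B,-B\}$ of one-signed blocks together with self-paired ``zero blocks'' $B=-B$, whose central arcs $(-\min B,\min B)$ are pairwise nested. This yields a bijection $\Lambda \mapsto (\lambda, M)$, where $\lambda = \Lambda|_{\{1,\dots,n\}} \in \NC(n)$ and $M$ is the set of positive parts of the zero blocks. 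Conversely, given $\lambda\in\NC(n)$ and any set $M$ of blocks of $\lambda$, adjoining the diameter $(-\min B,\min B)$ for each $B\in M$ produces a symmetric noncrossing partition \emph{iff} no arc of $\lambda$ crosses such a diameter, i.e.\ iff each chosen $\min B$ lies under no arc. Calling a block of $\lambda$ \emph{exposed} when no arc $(p,q)\in\Arc(\lambda)$ has $p<\min B<q$, I would check that $M$ may be an \emph{arbitrary} subset of the exposed blocks, and that $\#\text{blocks}(\Lambda) = 2|\lambda| - |M|$.

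\medskip
\noindent\textbf{Enumeration.} Fix $\lambda\in\NC(n)$ with $b$ blocks of which $e$ are exposed. From the block-count formula, $\Lambda$ has $2k$ or $2k+1$ blocks exactly when $|M|\in\{2(b-k),\,2(b-k)-1\}$, and Pascal's rule gives $\binom{e}{2(b-k)}+\binom{e}{2(b-k)-1}=\binom{e+1}{2(b-k)}$. (This identity, pairing an even and an odd cardinality, is precisely the source of the ``$2k$ or $2k+1$'' pooling in the statement — heuristically, toggling a virtual central block flips the parity of the block count.) Summing over $\lambda$, the assertion reduces to
\[ \#\{\Lambda=-\Lambda \text{ noncrossing} : \#\text{blocks}(\Lambda)\in\{2k,2k+1\}\} \;=\; \sum_{\lambda\in\NC(n)} \binom{e(\lambda)+1}{2(b(\lambda)-k)} \;=\; \binom{n}{k}^2, \]
whereupon the total follows from Vandermonde, $\sum_k\binom{n}{k}^2=\binom{2n}{n}$ (equivalently from $\sum_{\lambda\in\NC(n)}2^{e(\lambda)}=\binom{2n}{n}$, the case where all even cardinalities of $M$ are summed).

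\medskip
\noindent\textbf{Main obstacle.} The hard part is the final binomial identity, which amounts to computing the joint distribution of (number of blocks, number of exposed blocks) over $\NC(n)$. I expect to obtain it from the algebraic generating function $\sum_{n}\sum_{\lambda\in\NC(n)} z^{b(\lambda)}t^{e(\lambda)}x^n$, derived by the standard first-block (first-return) decomposition of noncrossing partitions — the block of $1$ is always exposed, its internal gaps contribute only non-exposed blocks, and its tail carries the remaining exposed blocks — and then checking, after the substitution encoding $2b-|M|$ and collapsing the coefficient pairs $z^{2k},z^{2k+1}$, that it matches the type B Narayana generating function $\sum_k\binom{n}{k}^2 w^k$. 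Verifying this collapse (rather than the structural bijection, which is elementary) is where the real work lies; small cases $n\le 2$ already confirm the identity and can guide the computation.
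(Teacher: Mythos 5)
Your approach is genuinely different from the paper's: the paper disposes of this lemma in two lines, by exhibiting the re-indexing bijection $i \mapsto i-n-1$ (for $i>0$), $i\mapsto i+n+1$ (for $i<0$) onto Athanasiadis's $B_n$-partitions and then citing \cite[Corollary 5.8]{Athanasiadis}, whereas you attempt a self-contained structural proof. Your structural work is correct and worth recording: in a noncrossing $\Lambda\vdash[\pm n]$ with $\Lambda=-\Lambda$, any block meeting both signs must be self-paired (otherwise its sign-changing arc crosses its mirror image), the diameters of the self-paired blocks are pairwise nested, and the resulting correspondence $\Lambda\leftrightarrow(\lambda,M)$ with $\lambda=\Lambda|_{[n]}\in\NC(n)$ and $M$ an arbitrary subset of the exposed blocks of $\lambda$, with $|\Lambda|=2|\lambda|-|M|$, checks out.

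The problem is that you have not actually proved the lemma. Your reduction converts the statement into the enumerative identity $\sum_{\lambda\in\NC(n)}\binom{e(\lambda)+1}{2(b(\lambda)-k)}=\binom{n}{k}^2$ (equivalently, a determination of the joint distribution of block number and exposed-block number over $\NC(n)$), and for this you offer only a plan: ``I expect to obtain it from the algebraic generating function \ldots Verifying this collapse \ldots is where the real work lies,'' supported by the cases $n\le 2$. Since the lemma is itself nothing but an enumerative identity, replacing it by a different unproved enumerative identity of comparable difficulty is a genuine gap, not a proof. To close it you would either have to carry out the first-return generating-function computation you sketch, or short-circuit the whole argument by citing a known result --- e.g.\ Athanasiadis as the paper does, or the fact that your target count is the rank generating function of Reiner's type $B_n$ noncrossing partition lattice, whose coefficients are $\binom{n}{k}^2$. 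As written, the hardest step is exactly the one left undone.
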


\begin{proof}
The involution on set partitions of $[\pm n]$ induced by the map
\[i \mapsto \begin{cases} i-n-1, &\text{if }i \in [n] \\ i+n+1,&\text{if } -i \in [n]\end{cases}\]
gives a bijection from $\NNB(n)$ to what
Athanasiadis \cite{Athanasiadis} calls the $B_n$-partitions associated to the nonnesting partitions of type $B_n$.
Hence the lemma is simply a restatement of \cite[Corollary 5.8]{Athanasiadis}.
\end{proof}

Recall that a \emph{Dyck path} with $2n$ steps is a lattice path beginning at $(0,0)$ and ending at $(n,0)$ which uses only the steps $(1,1)$ and $(1,-1)$ and which never travels below the $y$-axis.   It is well-known that the set $\cD_n$ of Dyck paths with $2n$ steps  has cardinality $|\cD_n| = |\NC(n)| = |\NN(n)| = \cC_n$, and there is an especially simple bijection $\NN(n) \to \cD_n$.  Namely, we associate to a nonnesting set partition $\Lambda \in \NN(n)$ the unique path in $\cD_n$ whose valleys (the points which simultaneously end a downstep $(1,-1)$ and begin an upstep $(1,1)$)
are  the points $(j+i-1,j-i-1)$ for $(i,j ) \in \Arc(\Lambda)$. Intuitively, this is the path tracing the upper boundary of the squares in the matrix of $\Lambda$ which are south or west of nonzero entries.

Call a Dyck path with $2n$ steps \emph{symmetric} if the path is symmetric about the vertical line $x = n$.  The order-preserving bijection $[\pm n] \to [2n]$ induces an inclusion of  $\NNB(n)$ in $\NN(2n)$, and it clear that with respect to this identification,
the bijection $\NN(2n) \to \cD_{2n}$ just mentioned restricts to a bijection from $\NNB(n)$ to the set of symmetric Dyck paths with $4n$ steps.
Hence:
\begin{lemma}\label{obs2} There are $ \binom{2n}{n}$ symmetric Dyck paths with $4n$ steps.
\end{lemma}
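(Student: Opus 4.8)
The plan is to obtain the count by transporting the enumeration of $\NNB(n)$ from Lemma~\ref{obs1} across the bijection already set up in the paragraph preceding the statement. That is, I would show that the composite
\[ \NNB(n) \;\hookrightarrow\; \NN(2n) \;\xrightarrow{\ \sim\ }\; \cD_{2n}, \]
namely the order-preserving inclusion induced by $[\pm n]\to[2n]$ followed by the valley bijection $\NN(2n)\to\cD_{2n}$, carries $\NNB(n)$ bijectively onto the symmetric Dyck paths. Since the members of $\cD_{2n}$ have $4n$ steps, this identifies the symmetric Dyck paths with $4n$ steps with $\NNB(n)$, whose total cardinality is $\binom{2n}{n}$ by Lemma~\ref{obs1} (this being the Vandermonde sum $\sum_{k}\binom{n}{k}^2=\binom{2n}{n}$ of the block-refined counts); this is exactly the asserted number.

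The content to verify is that the symmetry condition $\Lambda=-\Lambda$ defining $\NNB(n)$ corresponds, under this identification, to symmetry of the associated path about the vertical axis $x=2n$, so that the fixed sets match up. First I would observe that the order-preserving identification $[\pm n]\to[2n]$ conjugates negation $i\mapsto -i$ into the reversal $\rho\colon k\mapsto 2n+1-k$ on $[2n]$; hence the image of $\NNB(n)$ inside $\NN(2n)$ is precisely the set of $\rho$-invariant nonnesting partitions. Next, since $\rho$ sends an arc $(i,j)$ to $(2n+1-j,\,2n+1-i)$, the valley assignment $(i,j)\mapsto(i+j-1,\,j-i-1)$ shows that $\rho$ moves the valley at horizontal coordinate $c=i+j-1$ to horizontal coordinate $4n-c$ while fixing its height $j-i-1$. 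Thus the valley bijection intertwines $\rho$ with the reflection $x\mapsto 4n-x$, i.e.\ reflection about $x=2n$. A bijection intertwining two involutions restricts to a bijection between their fixed-point sets, and here the fixed points are the $\rho$-invariant partitions (the image of $\NNB(n)$) on one side and the symmetric Dyck paths on the other.

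I expect the only real obstacle to be this valley bookkeeping: checking that negation becomes the reversal $\rho$ under the chosen identification, and then that $\rho$ acts on valley coordinates by $c\mapsto 4n-c$ with heights preserved, which is what makes $\rho$-invariance equivalent to geometric symmetry about $x=2n$. Once these elementary computations are confirmed, the lemma follows at once from Lemma~\ref{obs1}, with no further enumeration needed.
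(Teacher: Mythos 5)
Your proposal is correct and follows exactly the paper's route: the paper also obtains the count by noting that the order-preserving inclusion $\NNB(n)\hookrightarrow\NN(2n)$ followed by the valley bijection $\NN(2n)\to\cD_{2n}$ restricts to a bijection onto the symmetric Dyck paths, and then invoking Lemma \ref{obs1}. The only difference is that you spell out the equivariance computation (negation becoming the reversal $k\mapsto 2n+1-k$ and valleys transforming by $c\mapsto 4n-c$ with heights fixed) that the paper dismisses as clear.
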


\section{Analogues in other types}
\label{b-section}

\def\so{\mathfrak{so}}

We are now prepared to discuss two analogues for our ``classical'' notion of a labeled set partition.  In detail,
given a nonnegative integer and an additive abelian group, we 
define $\sPB(n,\AA)$ (respectively, $\sPD(n,\AA)$) as the set of $\AA$-labeled set partitions $\Lambda$ of $ \{0\}\cup [\pm n]$ (respectively, $[\pm n]$) with the property that 
\be\label{b-def}(i,j) \in \Arc(\Lambda)\quad\text{if and only if}\quad (-j,-i) \in \Arc(\Lambda)\text{ and }\Lambda_{ij} + \Lambda_{-j,-i}=0.\ee  
We write $\sPB(n) \omdef = \sPB(n,\FF_2)$ and $\sPD(n) = \sPD(n,\FF_2)$
for the corresponding sets of unlabeled set partitions. 

The condition (\ref{b-def}) implies that $\Lambda$ has no arcs of the form $(-i,i)$ and hence that $|\Arc(\Lambda)|$ is even, and   that if $B$ is a block of $\Lambda$ then $-B$ is also a block. If $\Lambda \in \sPB(n,\AA)$ then exactly one block $B \in \Lambda$ contains zero and has $B=-B$, while if $\Lambda \in \sPD(n,\AA)$ then every block $B \in \Lambda$ has $B\neq -B$.
For example, the elements of $\sPB(2,\AA)$ are
\[\ba 
 \xy<0.0cm,-0.0cm> \xymatrix@R=-0.0cm@C=.1cm{
*{\bullet}  &
*{\bullet} &
*{\bullet}   &
*{\bullet}&
*{\bullet} \\
-2   & 
-1 &
0  &
+1 &
+2 
}\endxy
\qquad
 \xy<0.0cm,-0.0cm> \xymatrix@R=-0.0cm@C=.1cm{
*{\bullet} \ar @/^.5pc/ @{-} [r]^a &
*{\bullet}  &
*{\bullet} &
*{\bullet}\ar @/^.5pc/ @{-} [r]^{-a}&
*{\bullet} \\
-2   & 
-1 &
0  &
+1 &
+2 
}\endxy
\qquad
 \xy<0.0cm,-0.0cm> \xymatrix@R=-0.0cm@C=.1cm{
*{\bullet}  &
*{\bullet} \ar @/^.5pc/ @{-} [r]^a &
*{\bullet} \ar @/^.5pc/ @{-} [r]^{-a}  &
*{\bullet}&
*{\bullet} \\
-2   & 
-1 &
0  &
+1 &
+2 
}\endxy
\\
 \xy<0.0cm,-0.0cm> \xymatrix@R=-0.0cm@C=.1cm{
*{\bullet} \ar @/^.7pc/ @{-} [rr]^a &
*{\bullet} &
*{\bullet} \ar @/^.7pc/ @{-} [rr]^{-a} &
*{\bullet}&
*{\bullet} \\
-2   & 
-1 &
0  &
+1 &
+2 
}\endxy
\qquad
 \xy<0.0cm,-0.0cm> \xymatrix@R=-0.0cm@C=.1cm{
*{\bullet} \ar @/^.9pc/ @{-} [rrr]^a &
*{\bullet} \ar @/^.9pc/ @{-} [rrr]^{-a} &
*{\bullet} &
*{\bullet}&
*{\bullet} \\
-2   & 
-1 &
0  &
+1 &
+2 
}\endxy
\qquad
 \xy<0.0cm,-0.0cm> \xymatrix@R=-0.0cm@C=.1cm{
*{\bullet} \ar @/^.5pc/ @{-} [r]^a &
*{\bullet} \ar @/^.5pc/ @{-} [r]^b &
*{\bullet} \ar @/^.5pc/ @{-} [r]^{-b}  &
*{\bullet}\ar @/^.5pc/ @{-} [r]^{-a} &
*{\bullet} \\
-2   & 
-1 &
0  &
+1 &
+2 
}\endxy
\ea\] 
and the elements of $\sPD(2,\AA)$ are
\[
 \xy<0.0cm,-0.0cm> \xymatrix@R=-0.0cm@C=.1cm{
*{\bullet}  &
*{\bullet} &
*{\bullet}&
*{\bullet} \\
-2   & 
-1 &
+1 &
+2 
}\endxy
\qquad
 \xy<0.0cm,-0.0cm> \xymatrix@R=-0.0cm@C=.1cm{
*{\bullet} \ar @/^.5pc/ @{-} [r]^a &
*{\bullet}  &
*{\bullet}\ar @/^.5pc/ @{-} [r]^{-a}&
*{\bullet} \\
-2   & 
-1 &
+1 &
+2 
}\endxy
\qquad
 \xy<0.0cm,-0.0cm> \xymatrix@R=-0.0cm@C=.1cm{
*{\bullet} \ar @/^.7pc/ @{-} [rr]^a  &
*{\bullet}  \ar @/^.7pc/ @{-} [rr]^{-a}&
*{\bullet}  &
*{\bullet} \\
-2   & 
-1 &
+1 &
+2 
}\endxy
\] 
for $a,b \in \AA\setminus\{0\}$.

We have three reasons to suggest these  sets as the type B and D analogues of $\sP(n,\AA)$.  First and possibly foremost, $\sPB(n,\FF_q)$ and $\sPD(n,\FF_q)$ are the natural indexing sets for the supercharacters defined in \cite{bcd1,bcd2,bcd3} of the 
the Sylow $p$-subgroups of the Chevalley groups of  type $B_n$ and $D_n$ over $\FF_q$ (where $p$ is the odd characteristic of $\FF_q$); see Section \ref{last-sect} below for an explicit definition.
Thus, in analogy with our techniques in Section \ref{classical}, we can use the multiplicative action of the linear characters of these groups on their supercharacters to define a combinatorial action of a subset of $\sPB(n,\AA)$ or $\sPD(n,\AA)$ on itself. From an analysis of the orbits of  this action, we may then attempt to derive identities in the style of Theorem \ref{identities}.  

The papers \cite{bcd1,bcd2,bcd3} also define a set of supercharacters for the Sylow $p$-subgroups of the finite Chevalley groups of type $C_n$, which should motivate the definition of a third family of labeled set partitions $\sPC(n,\AA)$.  We omit this family from the present work because its investigation fits less naturally into our exposition and seems  not to yield any really new identities.

A second explanation for our notation comes from the following observation.
The order-preserving bijections $\{ 0\} \cup [\pm n] \to [2n+1]$ and $[\pm n] \to [2n]$ induce  inclusions $\sPB(n,\AA) \hookrightarrow \sP(2n+1,\AA)$ and $\sPD(n,\AA)\hookrightarrow \sP(2n,\AA)$, and we define the \emph{matrix} of $\Lambda$ in $\sPB(n,\AA)$ or $\sPD(n,\AA)$ to be the matrix of the corresponding set partition in $\sP(2n+1,\AA)$ or $\sP(2n,\AA)$. 
If $\CC$ is the field of complex numbers and 
\[ \so'(n,\CC) \omdef= \{ X \in \sl(n,\CC) : X + X^\dag = 0\}, \quad\text{where }(X^\dag)_{ij} \omdef= X_{n+1-j,n+1-i},\]
 then the map assigning a set partition to its matrix gives a bijection from $\sP(n+1,\CC)$, $\sPB(n,\CC)$, and $\sPD(n,\CC)$ to the sets of strictly upper triangular matrices with at most one nonzero entry in each row and column in $\sl(n+1,\CC)$,  $\so'(2n+1,\CC)$, and $\so'(2n,\CC)$,  which we may regard as the complex simple Lie algebras of types $A_n$, $B_n$, and $D_n$.

Finally, we mention that the unlabeled partitions $\sP(n+1)$ and $\sPB(n)$ are naturally identified with the intersection lattice of the Coxeter hyperplane arrangements of type $A_n$ and $B_n$ (see \cite{Reiner_NC}). The set $\sPD(n)$ is not similarly related to the intersection lattice of the type $D_n$ Coxeter hyperplane arrangement, however.

The sets of noncrossing elements of  $\sPB(n,\AA)$ and $\sPD(n+1,\AA)$ are both in  bijection with $\NC(n,\AA)$, so it will be fruitful to introduce a different kind of ``noncrossing'' partition to investigate.  To this end,
for $\X \in \{ \mathrm{B}, \mathrm{D}\}$, 
we let
 $\tNCX(n,\AA)$ denote the  subset of $\sPX(n,\AA)$ consisting of $\AA$-labeled set partitions $\Lambda$ with the following ``noncrossing'' property:
\[\text{If there are }(i,k),(j,l) \in \Arc(\Lambda)\text{ such that }i<j<k<l\text{ then } (i,k) = (-l,-j).\]
As usual, we write $\tNCX(n) \omdef = \tNCX(n,\FF_2)$
to indicate the corresponding sets of unlabeled set partitions. 
This set generalizes $\NC(n,\AA)$ in the following sense: one may define $\NC(n,\AA)$ as the subset of $\Lambda \in \sP(n,\AA)$ such that if $(i,k), (j,l) \in \Arc(\Lambda)$ and $\Gamma \in \sP(n,\AA)$ then $\{ (i,j),(j,k),(k,l)\} \not\subset \Arc(\Gamma)$.  The same definition with $\sP(n,\AA)$ replaced by $\sPX(n,\AA)$ gives $\tNCX(n,\AA)$.  Alternatively, Theorem \ref{last-thm} below provides a representation theoretic characterization of $\tNCX(n,\AA)$ in terms of its associated set of supercharacters.

As with $\sP(n)$, the sets $\sPB(n)$ and $\sPD(n)$ are partially ordered by refinement, and graded by the rank functions
\[ \rank(\Lambda) \omdef=\begin{cases}  n-(|\Lambda|-1)/2,&\text{for $\Lambda \in \sPB(n)$}, \\
n - |\Lambda|/2,&\text{for $\Lambda \in \sPD(n)$}.\end{cases}\]
Both $\sPB(n)$ and $\sPD(n)$ are meet semilattices since any collection of elements $\{\Lambda_i\}$ has a greatest lower bound given by  the partition whose blocks are the nonempty intersections of the form $ \bigcap_i B_i$ where each $B_i \in \Lambda_i$.
However, of the two, only $\sPB(n)$ possesses a greatest element and is therefore a lattice.
The meet of any collection of elements in $\tNCX(n)$ also lies in  $\tNCX(n)$ for $\X \in \{\mathrm{B},\mathrm{D}\}$, and it follows that $\tNCB(n)$ is likewise a graded lattice (with height $n$) while $\tNCD(n)$ is only a graded meet semilattice (with height $n-1$).

 \begin{remark} 
 To any Coxeter system $(W,S)$ there corresponds a lattice of noncrossing partitions, defined as the interval between the identity and any fixed Coxeter element in the  absolute order of $W$.  There is a large body of literature on this subject; see  \cite{memoir} for a useful survey.
  The noncrossing partition lattice of the Coxeter system of type $A_{n-1}$ coincides with  $\NC(n)$, and the lattices 
  of types $B_n$ and $D_n$, which we might as well denote by $\NCB(n)$ and $\NCD(n)$,  may be realized combinatorially as subposets of $\sPB(n)$.  However,  $\NCB(n)$ and $\NCD(n)$ are neither obviously related to $\tNCB(n)$ and $\tNCD(n)$ (though there are connections between them), 
  nor preserved by the group action defined below, so the somewhat more obscure sets $\tNCB(n)$ and $\tNCD(n)$ are better suited to our  purposes.
   \end{remark}

Write $\LinX(n,\AA)$ to denote the type X analogue of $\Lin(n,\AA)$: viz., the set of labeled partitions $\Lambda \in \sPX(n,\AA)$ whose blocks consist of consecutive integers or, equivalently, which have $\Arc(\Lambda) = \Cov(\Lambda)$.  
We define $\alpha+\Lambda$ for $\alpha \in \LinX(n,\AA)$ and $\Lambda \in \sPX(n,\AA)$ exactly as in Definition \ref{+def}, only with 
 $\Lin(n,\AA)$ and $\sP(n,\AA)$ replaced  by $\LinX(n,\AA)$ and $\sPX(n,\AA)$.    For example, if
 \[ \alpha
 =
 \xy<0.0cm,-0.0cm> \xymatrix@R=-0.0cm@C=.1cm{
*{\bullet} \ar @/^.5pc/ @{-} [r]^{a} &
*{\bullet} \ar @/^.5pc/ @{-} [r]^{b} &
*{\bullet} \ar @/^.5pc/ @{-} [r]^{c} &
*{\bullet} \ar @/^.5pc/ @{-} [r]^{-c}  &
*{\bullet}\ar @/^.5pc/ @{-} [r]^{-b} &
*{\bullet} \ar @/^.5pc/ @{-} [r]^{-a} &
*{\bullet} \\
-3 &
-2   & 
-1 &
0  &
+1 &
+2 &
+3
}\endxy
\qquad\text{and}\qquad
\Lambda= \xy<0.0cm,-0.0cm> \xymatrix@R=-0.0cm@C=.1cm{
*{\bullet} \ar @/^.9pc/ @{-} [rrrr]^{t} &
*{\bullet} &
*{\bullet} \ar @/^.9pc/ @{-} [rrrr]^{-t} &
*{\bullet}  &
*{\bullet} &
*{\bullet} &
*{\bullet} \\
-3   & 
-2 &
-1  &
0 &
+1 &
+2 &
+3 
}\endxy
\]
for $a,b,c,d,e,t \in \AA\setminus\{0\}$ then we have
\[ \alpha +\Lambda=
 \xy<0.0cm,-0.0cm> \xymatrix@R=-0.0cm@C=.1cm{
*{\bullet} \ar @/^1.5pc/ @{-} [rrrr]^{t} &
*{\bullet} \ar @/^.5pc/ @{-} [r]^b  &
*{\bullet} \ar @/^1.5pc/ @{-} [rrrr]^{-t} &
*{\bullet}  &
*{\bullet}\ar @/^.5pc/ @{-} [r]^{-b} &
*{\bullet} &
*{\bullet} \\
-3   & 
-2 &
-1  &
0 &
+1 &
+2 &
+3 
}\endxy
\]
Note that we may alternately characterize $\alpha+\Lambda$ as in Observation \ref{+obs}, or in terms of the matrices of $\alpha \in \LinX(n,\AA)$ and $\Lambda \in \sPX(n,\AA)$ as in 
Observation \ref{+def2}. 
As before, the operation $+$ makes $\LinX(n,\AA)$ into an abelian group (isomorphic to $\AA^n$ if $\X = \mathrm{B}$ and to $\AA^{n-1}$ if $\X = \mathrm{D}$) acting on $\sPX(n,\AA)$, and it is evident from our definitions that this action 
preserves
 $\tNCX(n,\AA)$. 

Recall the definition in Section \ref{action-sec} of $\Lambda^+$ for an arbitrary set partition $\Lambda$.  
If we view unlabeled set partitions as $\FF_2$-labeled, then this definition amounts to the formulas 
\[\ba
 \Lambda^+& = \{ \{-n,\dots,-1,0,1,\dots,n\}\} + \Lambda,&&\quad\text{for $\Lambda \in \sPB(n)$},
 \\
  \Lambda^+& = \{ \{-n,\dots,-1\},\{1,\dots,n\}\} + \Lambda,&&\quad\text{for $\Lambda \in \sPD(n)$}.
  \ea\]
  Hence $\Lambda \mapsto \Lambda^+$ defines an involution of both $\sPX(n)$ and $\tNCX(n)$ (as sets, not lattices) for $\X \in \{\mathrm{B},\mathrm{D}\}$.
The following analogue of Proposition \ref{alpha-prop}
uses this involution to show that the lattice $\tNCB(n)$ is rank symmetric.

\begin{proposition}
The map $\Lambda \mapsto \Lambda^+$ is  rank inverting on $\tNCB(n)$. That is,
if $\Lambda \in \tNCB(n)$ has $2k+1$ blocks, then $\Lambda^+ \in \tNCB(n)$ has $2(n-k)+1$ blocks. 
\end{proposition}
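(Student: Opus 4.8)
The plan is to avoid the inductive case analysis behind Proposition~\ref{alpha-prop} and instead reduce the whole statement to a single arc-counting identity. Because $\Lambda$ is a partition of the $(2n+1)$-element set $\{0\}\cup[\pm n]$, its block count and arc count obey $|\Lambda| + |\Arc(\Lambda)| = 2n+1$. Hence $|\Lambda| = 2k+1$ is equivalent to $|\Arc(\Lambda)| = 2(n-k)$, and the desired conclusion $|\Lambda^+| = 2(n-k)+1$ is equivalent to $|\Arc(\Lambda^+)| = 2k$. It therefore suffices to show that
\[ |\Arc(\Lambda)| + |\Arc(\Lambda^+)| = 2n \qquad\text{for every } \Lambda \in \tNCB(n). \]

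To evaluate the two arc counts, I would begin from the definition of $\Lambda^+$, which gives $\Arc(\Lambda^+) = (\Arc(\Lambda)\setminus\Cov(\Lambda)) \sqcup \cT$, where $\cT$ is the set of pairs $(i,i+1)$ with $i$ maximal and $i+1$ minimal in their blocks of $\Lambda$; the union is disjoint since $\cT$ consists of cover-position arcs while $\Arc(\Lambda)\setminus\Cov(\Lambda)$ contains none. Setting $c = |\Cov(\Lambda)|$, $a = |\Arc(\Lambda)\setminus\Cov(\Lambda)|$, and $t = |\cT|$, we obtain $|\Arc(\Lambda)| = c+a$ and $|\Arc(\Lambda^+)| = a+t$, so the displayed identity becomes $c + 2a + t = 2n$. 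The $2n$ consecutive pairs $(i,i+1)$ with $-n \le i \le n-1$ partition into the $c$ cover arcs, the $t$ members of $\cT$, and the remaining \emph{neutral} pairs, so this is in turn equivalent to the claim that there are exactly $2a$ neutral pairs.

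The core of the proof is then a bijection between the neutral pairs and the $2a$ endpoints of the non-cover arcs of $\Lambda$. Each non-cover arc $(p,q)$ with $q \ge p+2$ makes $(p,p+1)$ neutral through its left endpoint $p$ (as $p$ fails to be maximal in its block) and makes $(q-1,q)$ neutral through its right endpoint $q$ (as $q$ fails to be minimal); conversely, a neutral pair $(i,i+1)$ lies between distinct blocks with either $i$ non-maximal or $i+1$ non-minimal, and so arises from exactly such an arc. Since each vertex begins and ends at most one arc, the only way this correspondence could fail to be injective is if a single pair $(i,i+1)$ were produced twice, i.e.\ if there were non-cover arcs $(i,q)$ and $(p,i+1)$ with $p < i < i+1 < q$. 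This is precisely a crossing of $(p,i+1)$ and $(i,q)$, and the defining property of $\tNCB(n)$ allows a crossing $(i',k'),(j',l')$ only when $(i',k') = (-l',-j')$; here that would force $i+1 = -i$, which no integer satisfies. Thus no pair is counted twice, the correspondence is a bijection, and $c+2a+t = 2n$ holds.

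The step I expect to require the most care is this no-overlap claim, together with the routine but necessary verification that each non-cover endpoint genuinely lands in a neutral pair and that the self-negative zero block creates no anomaly at the central pairs $(-1,0)$ and $(0,1)$. Once the crossing restriction rules out the overlapping configuration, the arc identity and hence the block count $|\Lambda^+| = 2(n-k)+1$ follow at once, and the rank computation $\rank(\Lambda) + \rank(\Lambda^+) = (n-k) + k = n$ confirms that $\Lambda \mapsto \Lambda^+$ is rank inverting on the height-$n$ lattice $\tNCB(n)$.
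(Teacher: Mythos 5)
Your proof is correct, and it takes a genuinely different route from the paper. The paper deduces the statement from the type A result (Proposition \ref{alpha-prop}) by means of the map $\nc$: it checks that $\Lambda$ and $\Lambda^+$ have the same crossings (always pairs $(-i,j),(-j,i)$), concludes $\nc(\Lambda)^+=\nc(\Lambda^+)$, and transports the block count through the inclusion $\tNCB(n)\hookrightarrow\sP(2n+1)$, ultimately resting on the inductive case analysis in the proof of Proposition \ref{alpha-prop}. You instead give a self-contained double count of the $2n$ consecutive pairs in $\{0\}\cup[\pm n]$, reducing everything to the identity $c+2a+t=2n$ and locating the entire role of the ``noncrossing'' hypothesis in one place: a neutral pair $(i,i+1)$ could only be produced twice by arcs $(p,i+1),(i,q)$ with $p<i<i+1<q$, and the defining condition of $\tNCB(n)$ would then force $i+1=-i$, which is impossible (so even the permitted crossings $(-i,j),(-j,i)$ never cause a collision). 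I verified the surjectivity and injectivity claims and the boundary cases at $(-1,0)$ and $(0,1)$; they all hold. What each approach buys: the paper's argument reuses existing machinery and the map $\nc$, which it needs elsewhere anyway, at the cost of two ``straightforward to check'' verifications and an induction hidden inside Proposition \ref{alpha-prop}; your argument is elementary, avoids induction entirely, makes transparent exactly why the conclusion can fail for crossing partitions, and specializes (with the collision case vacuous) to a new proof of Proposition \ref{alpha-prop} itself. The only point you leave implicit is the membership $\Lambda^+\in\tNCB(n)$, but this is immediate since $\Lambda^+=\{\{-n,\dots,n\}\}+\Lambda$ and the action of $\LinB(n,\AA)$ was already shown to preserve $\tNCB(n,\AA)$.
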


\begin{remark}
The lattice of type $B_n$ noncrossing partitions is also rank symmetric; in fact, it is self-dual and locally self-dual \cite{memoir,Reiner_NC}. The lattice $\tNCB(n)$ fails to possess any of these stronger properties
when $n\geq 4$.  
\end{remark}

\begin{proof} Observe that the definition of $\nc(\Lambda)$ make sense for $\Lambda \in \sPB(n)$, although  the set partition $\nc(\Lambda) \vdash \{0\}\cup [\pm n]$ may no longer belong to $\sPB(n)$.  
It is straightforward to check that   $\Lambda^+ \in \tNCB(n)$ if $\Lambda \in \tNCB(n)$
and that $\Lambda$ and $\Lambda^+$ have the same crossings, which are always pairs of arcs of the form
 $(- i,  j), (-j,i)$ 
for $i,j \in [n]$.
It follows that 
$\nc(\Lambda)^+ = \nc(\Lambda^+)$.  Since $\nc$ preserves the number of blocks in a set partition and since both $\nc$ and the involution $+$ commute with natural inclusion $\tNCB(n) \hookrightarrow \sP(2n+1)$, our result is a consequence of Proposition \ref{alpha-prop}.
\end{proof}

As a corollary, we  similarly compute the number of blocks in $\Lambda^+$ for $\Lambda \in \tNCD(n)$.

\begin{corollary} 
If $\Lambda \in \tNCD(n)$ then
\[|\Lambda^+| =\begin{cases}  2n+2-|\Lambda|, &\text{if   $-1$ is the greatest element of its block in $\Lambda$}, \\
2n-|\Lambda|,&\text{otherwise}.\end{cases}\]
\end{corollary}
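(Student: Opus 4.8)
The plan is to reduce the type $D$ statement to the type $B$ result just proved, by relating $\Lambda^+$ for $\Lambda \in \tNCD(n)$ to the corresponding involution on $\tNCB(n)$. Recall the two formulas displayed before the proposition: for $\Lambda \in \sPD(n)$ we have $\Lambda^+ = \{\{-n,\dots,-1\},\{1,\dots,n\}\} + \Lambda$, whereas for $\Lambda \in \sPB(n)$ we add the single all-inclusive block $\{\{-n,\dots,-1,0,1,\dots,n\}\}$. The essential difference between the type $B$ and type $D$ involutions is therefore entirely local to the middle of the standard representation, concentrated on whether the arcs $(-1,0)$ and $(0,1)$ are present. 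So first I would fix $\Lambda \in \tNCD(n)$ and carefully track how $\Lambda^+$ is built from $\Lambda$ via Observation \ref{+obs}, focusing on the behavior near the positions $-1$ and $+1$ (there being no $0$ in type $D$), where the adjacent superdiagonal arc is $(-1,+1)$.

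The heart of the argument is a case analysis on the block of $-1$ in $\Lambda$, according to the dichotomy in the statement: whether $-1$ is the greatest element of its block. By the symmetry $\Lambda = -\Lambda$ forced by \eqref{b-def}, the block of $-1$ and the block of $+1$ are negatives of one another, so the condition ``$-1$ is the greatest element of its block'' is exactly the condition that $+1$ is the least element of its block, i.e.\ that $(-1,+1)$ is \emph{not} already an arc of $\Lambda$ but \emph{is} added by the $+$ operation (it lies in the set $\cT$ of the involution's definition, with $-1$ maximal and $+1$ minimal in their respective blocks). In that case adding $\{\{-n,\dots,-1\},\{1,\dots,n\}\}$ introduces the new arc $(-1,+1)$ that the type $B$ involution would have suppressed (since in type $B$ the block of $0$ interposes), merging what would otherwise have been two separate blocks into one; this accounts for the $+2$ discrepancy, giving $2n+2-|\Lambda|$. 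In the complementary case $-1$ is not maximal in its block, so $(-1,+1)$ is neither present nor added, the local structure matches the type $B$ pattern away from $0$, and one gets $2n-|\Lambda|$.

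To make this precise and to borrow the block count from the preceding proposition, I would set up an explicit comparison between the type $D$ involution on $\Lambda$ and the type $B$ involution applied to a suitable lift of $\Lambda$ into $\tNCB(n)$ (inserting a singleton block $\{0\}$, i.e.\ the image under the natural inclusion $\tNCD(n) \hookrightarrow \tNCB(n)$). The previous proposition tells us that the type $B$ involution sends a partition with $2k+1$ blocks to one with $2(n-k)+1$ blocks; I would then count exactly how the block total changes when one passes from this $B$-involution back to the $D$-involution, which amounts to accounting for the singleton $\{0\}$ and for the presence or absence of the arc $(-1,+1)$ — the two cases contributing different corrections. The main obstacle I anticipate is the bookkeeping near the center: because $0$ is absent in type $D$, the superdiagonal arc that the involution may insert straddles $-1$ and $+1$ directly, and I must verify that in the ``$-1$ maximal'' case this genuinely fuses two blocks (rather than, say, attaching to an existing arc and being deleted as the shorter of two arcs sharing an endpoint). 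Checking that the $\cS$ and $\cT$ sets behave as claimed in each case, and that no crossing is created so that the output indeed lands in $\tNCD(n)$, is the delicate step; once that local analysis is pinned down, the global count follows immediately from Proposition \ref{alpha-prop} and its type $B$ analogue.
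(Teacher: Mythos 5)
Your overall framework is the one the paper uses: embed $\tNCD(n)$ into $\tNCB(n)$ by adjoining the singleton block $\{0\}$ (the map $\varphi(\Lambda)=\Lambda\cup\{\{0\}\}$), compare the two involutions, and split into cases according to whether $-1$ is maximal in its block. But the mechanism you propose for the main case is incorrect. The type D involution is $\Lambda^+=\{\{-n,\dots,-1\},\{1,\dots,n\}\}+\Lambda$, and the acting element has \emph{no} arc joining $-1$ to $+1$: its two blocks are disjoint, and in the general definition of $\Lambda^+$ the set $\cT$ consists of pairs of \emph{consecutive integers} $(i,i+1)$, which $(-1,+1)$ is not. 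Indeed no element of $\LinD(n,\AA)$ ever contains the arc $(-1,1)$, and no arc $(-i,i)$ can appear in $\Lambda^+$ since $\Lambda^+$ must remain in $\sPD(n)$. So your central claim -- that when $-1$ is maximal in its block the arc $(-1,+1)$ is added and two blocks are merged -- is false, and the arithmetic attached to it does not work either: merging two blocks into one changes the block count by $-1$, which cannot ``account for the $+2$ discrepancy.''

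The true discrepancy runs in the opposite direction. When $-1$ is maximal in its block (equivalently, by $\Lambda=-\Lambda$, when $+1$ is minimal in its block), it is the \emph{type B} involution applied to $\varphi(\Lambda)$ that inserts the arcs $(-1,0)$ and $(0,1)$, fusing the block of $-1$, the singleton $\{0\}$, and the block of $+1$ into a single block; the type D involution touches nothing between $-1$ and $+1$. Hence $\varphi(\Lambda)^+$ has \emph{two fewer} blocks than $\varphi(\Lambda^+)$, and since the type B proposition gives $|\varphi(\Lambda)^+|=2n+2-|\varphi(\Lambda)|=2n+1-|\Lambda|$, one obtains $|\Lambda^+|=2n+2-|\Lambda|$. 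In the complementary case your assertion $\varphi(\Lambda^+)=\varphi(\Lambda)^+$ is correct and yields $|\Lambda^+|=2n-|\Lambda|$. You rightly flagged the local analysis near the center as the delicate step; carrying it out would have shown that the arc $(-1,+1)$ never appears and would have forced the corrected accounting above.
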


\begin{proof}
Let $\varphi : \tNCD(n) \to \tNCB(n)$ be the injective map which adds the singleton block $\{0\}$ to $\Lambda \in \tNCD(n)$; i.e., $\varphi(\Lambda) = \Lambda \cup \{ \{ 0\} \}$. It is easy to see that if $-1$ is not the greatest element of its block in $\Lambda \in \tNCD(n)$, so that $\Lambda$ has arcs of the form $(-1,i),(-i,1)$ for some $1<i\leq n$, then $\varphi(\Lambda^+) = \varphi(\Lambda)^+$. In this case $|\Lambda^+| = 2n+2-|\Lambda|$ by the previous proposition applied to $\varphi(\Lambda) \in \tNCB(n)$.  On the other hand, if $-1$ is the greatest element of its block in $\Lambda$,
then 
$\varphi(\Lambda)^+$ is formed from $\varphi(\Lambda^+)$ by adding the arcs $(-1,0)$ and $(0,1)$.  Hence $\varphi(\Lambda)^+$ has two fewer blocks than $\varphi(\Lambda^+)$, so we now obtain $|\Lambda^+| = 2n-|\Lambda|$ from the previous proposition applied to $\varphi(\Lambda)$.
\end{proof}

Fix two additive abelian groups $\AA$ and  $\BB$ and 
 define  $\sPX(n,\AA,\BB)$ and $\tNCX(n,\AA,\BB)$ for $\X \in \{\mathrm{B},\mathrm{D}\}$ 
 as the  subsets of $\sPX(n,\AA\oplus \BB)$ and $\tNCX(n,\AA\oplus\BB)$ consisting of labeled set partitions $\Lambda$ satisfying (\ref{ab-cond}).
 Note as in Section \ref{classical} that $+$ defines an action of $\LinX(n,\BB)$ on $\sPX(n,\AA,\BB)$ and $\tNCX(n,\AA,\BB)$. 
Mirroring our previous notation, we define polynomials
\[ 
\BellX{n}{x,y}  \omdef= \sum_{\Lambda \in \sPX(n)} x^{\frac{|\Arc(\Lambda)|}{2}} (y/x)^{\frac{|\Cov(\Lambda)|}{2}}
\quad\text{and}\quad
 \CatX{n}{x,y}  \omdef=  \sum_{\Lambda \in \tNCX(n)} x^{\frac{|\Arc(\Lambda)|}{2}} (y/x)^{\frac{|\Cov(\Lambda)|}{2}},
\] 
and let
$\BellX{n}{x} \omdef=\BellX{n}{x,x}$ and $\CatX{n}{x} \omdef = \CatX{n}{x,x}.$ 

Clearly $\BellB{n}{x} = \sum_{k=0}^n \stirlb{n}{k} x^{n-k}$ where 
$\stirlb{n}{k}$ denotes the number of set partitions in $\sPB(n)$ with $2k+1$ blocks.  The  numbers $\stirlb{n}{k}$ coincide with the Whitney numbers of the second kind $W_2(n,k)$ studied in \cite{Dowling,Dowling2} and appear as sequence \cite[A039755]{OEIS}.
On the other hand, since the blocks of set partitions in $ \sPD(n)$ comes in pairs $\pm B$,  the preimage of $\Lambda \in \sP(n)$ under the surjection $\sPD(n) \to \sP(n)$ induced by the absolute value map $[\pm n] \to [n]$
contains $2^{n-|\Lambda|} = 2^{|\Arc(\Lambda)|}$ elements, all having $2|\Arc(\Lambda)|$ arcs. Hence \be\label{belld-eq} \BellD{n}{x} = \Bell{n}{2x},\quad\text{for all $n\geq 0$}.\ee
We find from these formulas that $\{\BellB{n}{1}\}_{n=0}^\infty = (1,2,6,24,116,648,4088,\dots)$ gives the sequence of Dowling numbers \cite[A007405]{OEIS}  while $\{\BellD{n}{1}\}_{n=0}^\infty = (1,1,3,11,49,257,1539,\dots)$ gives sequence \cite[A004211]{OEIS}.

 The following result generalizes the recurrence  $\Bell{n}{x} = \sum_k \binom{n}{k} \Bell{k}{x}$ noted in the first remark to Theorem \ref{identities}.  The first identity is due essentially  to Spivey \cite{spivey}, who proved it in the special case $x=1$ 
 (the general proof   is not much different
 from the short combinatorial argument in \cite{spivey}). 
  
\begin{proposition} The following recurrences hold for integers $m,n\geq 0$:
\begin{enumerate}
\item[(1)] $\ds\Bell{m+n}{x} = \sum_{j=0}^m \sum_{k=0}^n  x^{m+n-j-k}  j^{n-k}\binom{n}{k}  \stirl{m}{j} \Bell{k}{x}$.

\item[(2)] $\ds\BellB{m+n}{x} = \sum_{j=0}^m \sum_{k=0}^n x^{m+n-j-k} (2j+1)^{n-k} \binom{n}{k} \stirlb{m}{j} \Bell{k}{2x}$.

\end{enumerate}
\end{proposition}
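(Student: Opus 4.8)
The plan is to give a direct bijective/combinatorial proof of both recurrences by classifying the set partitions of the relevant type according to how their blocks interact with a distinguished ``small'' part and ``large'' part of the ground set. This mirrors Spivey's argument for the classical Bell numbers and adapts it to the symmetric type B setting. Throughout, I will use the interpretation $\Bell{n}{x} = \sum_\Lambda x^{|\Arc(\Lambda)|}$ over $\Lambda \in \sP(n)$ and $\BellB{n}{x} = \sum_\Lambda x^{|\Arc(\Lambda)|/2}$ over $\Lambda \in \sPB(n)$, together with the identity $\Bell{k}{2x} = \BellD{k}{x}$ from \eqref{belld-eq}, which is what makes the factor $\Bell{k}{2x}$ appear naturally in part (2).

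For part (1), I would split the ground set $[m+n]$ into a ``bottom'' block of the first $m$ elements and a ``top'' block of the last $n$ elements, and classify each $\Lambda \in \sP(m+n)$ by the data of (a) its restriction to the first $m$ elements, which is an arbitrary partition of $[m]$ contributing $\stirl{m}{j}$ choices when it has $j$ blocks, weighted by its arcs; (b) which of the last $n$ elements form new blocks disjoint from the first $m$, contributing a partition enumerated by $\Bell{k}{x}$ on some $k$-subset; and (c) how the remaining $n-k$ of the top elements are distributed among the $j$ existing bottom blocks, giving $j^{n-k}$ choices. The arc count must be tracked carefully: each of the $m+n-j-k$ elements that is not a block-minimum contributes one arc, which accounts for the global power $x^{m+n-j-k}$. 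Assembling these counts produces exactly the summand $x^{m+n-j-k} j^{n-k} \binom{n}{k} \stirl{m}{j} \Bell{k}{x}$, where the $\binom{n}{k}$ selects which $k$ of the top elements start fresh blocks.

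For part (2), I would run the same classification on $\Lambda \in \sPB(m+n)$, using the symmetry $B \mapsto -B$ to reduce everything to the ``positive half'' of the diagram. Here the distinguished central block containing $0$ plays a special role: restricting to the innermost $m$ symmetric positions gives a type B partition enumerated by $\stirlb{m}{j}$ when it has $2j+1$ blocks, while the outer $n$ symmetric positions split into those forming genuinely new $\pm$-paired blocks (enumerated by $\Bell{k}{2x} = \BellD{k}{x}$, since type D pairs each unsigned block into one of two signed blocks) and those attaching to an existing block. The key point is that an outer element can attach either to the central $0$-block or to one of the $j$ outer positive blocks \emph{or} their negatives, yielding $2j+1$ choices rather than $j$; this is precisely where the factor $(2j+1)^{n-k}$ replaces the $j^{n-k}$ of part (1). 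The half-arc bookkeeping matches because the symmetry identifies arcs in pairs.

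The main obstacle I anticipate is the careful arc-counting in the type B case, specifically verifying that the weight exponents work out to $x^{m+n-j-k}$ after dividing by $2$ to account for the $\pm$ symmetry, and confirming that the three attachment options for an outer element (central block, a positive outer block, or its negative) are genuinely in bijection with a set of size $2j+1$ without overcounting the element that could merge with $0$. I would want to check the small cases $\sPB(2,\AA)$ and $\sPD(2,\AA)$ displayed in the excerpt against the formula to make sure the $(2j+1)$ count and the appearance of $\BellD{k}{x} = \Bell{k}{2x}$ are both correctly normalized; once those base cases are confirmed, the general summand assembly should follow the same template as part (1).
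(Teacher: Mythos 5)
Your decomposition is exactly the one the paper uses: restrict to the inner $m$ (symmetric) positions to get a partition with $j$ (resp. $2j+1$) blocks counted by $\stirl{m}{j}$ (resp. $\stirlb{m}{j}$), choose which $n-k$ outer positions attach to existing blocks in $j^{n-k}$ (resp. $(2j+1)^{n-k}$) ways, let the remaining $k$ outer positions carry an independent labeled partition counted by $\Bell{k}{x}$ (resp. $\BellD{k}{x}=\Bell{k}{2x}$), and weight the arcs of the enlarged inner partition by $x^{m+n-j-k}$. This is the paper's proof (stated there for part (2), with part (1) noted as similar), so the proposal is correct and takes essentially the same approach.
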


\begin{proof} We  only prove (2) as the proof of (1) is similar.
Let $\cY = \{ \pm (m+k) : k \in [n]\}$  and suppose $\AA$ has $x+1$ elements.
We may construct the elements of $\sPB(m+n,\AA)$ in the following manner. 
First choose a partition $\Gamma$ in $\sPB(m)$ with $2j+1$ blocks; there are $\stirlb{m}{j}$ choices for this.  Next, select a $2(n-k)$-element subset $S\subset \cY$ with $S=-S$ and distribute
the elements of $S$ among the blocks of $\Gamma$ so that the resulting partition $\Gamma'$ has $\Gamma' = -\Gamma'$; there are $\binom{n}{k}$ choices for $S$ and $(2j+1)^{n-k}$ possible distributions, since once we have chosen the blocks to contain the positive elements of $S$ the blocks containing the negative elements are uniquely determined.
  Now, label the $2(m+n-j-k)$ arcs of $\Gamma'$ by nonzero elements of $\AA$ so that $\Gamma'$ satisfies (\ref{b-def}); there are $x^{m+n-j-k}$ such labelings.   
  Finally, choose an $\AA$-labeled partition of $\cY\setminus S$ satisfying (\ref{b-def}) and concatenate this with $\Gamma'$ to form an element of $\sPB(m+n,\AA)$; there are $\BellD{k}{x}$ choices for  this partition.
Each element of $\sPB(m+n,\AA)$ arises from exactly one such construction, so summing the product $x^{m+n-j-k}(2j+1)^{n-k} \binom{n}{k} \stirlb{m}{j} \Bell{k}{2x}$ over all possible values of $j$ and $k$ yields $\BellB{m+n}{x}$.
\end{proof}

To likewise compute $\CatX{n}{x}$, we
recall the definition in the previous section of $\nc(\Lambda)$ for a set partition $\Lambda$. Modifying this construction slightly, for  $\Lambda \in \sPB(n)$, we define $\ncb(\Lambda)$ to be the set partition of $[\pm n]$ formed by removing zero from its block in $\nc(\Lambda)$.  For example,
\[ \ncb\( \xy<0.0cm,-0.0cm> \xymatrix@R=-0.0cm@C=.3cm{
*{\bullet} \ar @/^1.2pc/ @{-} [rrrr] &
*{\bullet} \ar @/^.7pc/ @{-} [rr] &
*{\bullet} \ar @/^1.2pc/ @{-} [rrrr]  &
*{\bullet} \ar @/^.7pc/ @{-} [rr] &
*{\bullet} &
*{\bullet} &
*{\bullet} \\
-3 &
-2   & 
-1 &
0  &
+ 1 &
+ 2 &
+ 3 
}\endxy\)
=
 \xy<0.0cm,-0.0cm> \xymatrix@R=-0.0cm@C=.3cm{
*{\bullet} \ar @/^1.3pc/ @{-} [rrrrr] &
*{\bullet} \ar @/^.9pc/ @{-} [rrr] &
*{\bullet} \ar @/^.5pc/ @{-} [r]  &
*{\bullet} &
*{\bullet} &
*{\bullet} \\
-3 &
-2   & 
-1 &
+ 1 &
+ 2 &
+ 3 
}\endxy
\]
 We note the following properties of these maps in the present context.
 \begin{lemma}\label{uncross-obs}
The maps 
\[ \ba \ncb : \tNCB(n) &\to \{\text{Noncrossing set partitions $\Lambda\vdash[\pm n]$ with $\Lambda = -\Lambda$} \}
\\
\nc : \tNCD(n) &\to \left\{\ba &\text{Noncrossing set partitions $\Lambda \vdash[\pm n]$ with $\Lambda = -\Lambda$ which} \\ &\text{have an even number of blocks $B$ such that $B=-B$}\ea\right\}
\ea
\]
are bijections. Furthermore, if $\Lambda \in \tNCB(n)$ has $2k+1$ blocks then $\ncb(\Lambda)$ has either $2k$ or $2k+1$ blocks.
\end{lemma}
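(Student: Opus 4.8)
The plan is to analyze the two maps $\ncb$ and $\nc$ separately, in each case exhibiting an explicit inverse rather than arguing by a counting coincidence, since the statement asserts genuine bijections. For $\ncb$, first I would recall that $\nc$ already restricts to a bijection from $\tNCB(n)$ onto those noncrossing $\Gamma \vdash \{0\}\cup[\pm n]$ with $\Gamma = -\Gamma$: this follows because $\nc$ commutes with the negation $\Lambda \mapsto -\Lambda$ (so $\nc(-\Lambda) = -\nc(\Lambda)$, and $\Lambda = -\Lambda$ forces $\nc(\Lambda) = -\nc(\Lambda)$), and because the ``noncrossing'' condition defining $\tNCB(n,\AA)$—which permits exactly the symmetric crossings $(i,k),(j,l)$ with $(i,k)=(-l,-j)$—is precisely the preimage under $\nc$ of genuine noncrossingness, as each such symmetric crossing gets converted to a nesting by step 2 of the $\nc$ algorithm. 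The map $\ncb$ then composes this with ``delete $0$ from its block,'' so I would check that deleting $0$ is a bijection from symmetric noncrossing partitions of $\{0\}\cup[\pm n]$ onto symmetric noncrossing partitions of $[\pm n]$; its inverse inserts $0$ into the unique symmetric block $B=-B$ (which exists since, in a symmetric partition, $0$ lies in a block fixed by negation).

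For the block count under $\ncb$, since $\nc$ preserves the number of blocks, $\nc(\Lambda)$ has $2k+1$ blocks when $\Lambda \in \tNCB(n)$ does. Passing to $\ncb(\Lambda)$ removes $0$ from its block $B_0$; this decreases the block count by one exactly when $B_0 = \{0\}$ was a singleton, and leaves it unchanged otherwise. Thus $\ncb(\Lambda)$ has $2k$ blocks (the singleton case) or $2k+1$ blocks, as claimed.

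For $\nc$ on $\tNCD(n)$, the key point is to identify the image correctly. Here I would use that every block $B$ of $\Lambda \in \sPD(n)$ satisfies $B \neq -B$, so the blocks come in genuine $\pm$-pairs; since $\nc$ commutes with negation, $\nc(\Lambda)$ is again symmetric, and I must show $\nc(\Lambda)$ has an even number of self-paired blocks $B=-B$. The self-paired blocks of $\nc(\Lambda)$ are precisely those arising from the symmetric nestings created out of the symmetric crossings $(-i,j),(-j,i)$ of $\Lambda$; tracking how $\nc$ merges or preserves $\pm$-pairs of blocks, a block becomes self-paired in $\nc(\Lambda)$ exactly when $\nc$ fuses a block $C$ with $-C$, and such fusions occur in a way that always produces them in even number (equivalently, the parity of self-paired blocks is an invariant of the negation-commuting, block-count-preserving procedure starting from the no-self-paired-block configuration of $\sPD(n)$). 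I would make this precise by exhibiting the inverse map: given a symmetric noncrossing $\Gamma \vdash [\pm n]$ with an even number of self-paired blocks, one uncrosses the symmetric nestings to recover a $\Lambda$ with all blocks satisfying $B \neq -B$, and the even-count hypothesis is exactly what guarantees the self-paired blocks can be split consistently into $\pm$-pairs.

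The main obstacle I anticipate is the $\nc : \tNCD(n)$ parity statement—verifying that the image consists of symmetric noncrossing partitions with an \emph{even} number of self-paired blocks, and that this even-parity condition is both necessary and sufficient for a symmetric noncrossing partition to lie in the image. Establishing the necessity of evenness requires a careful bookkeeping of which blocks of $\nc(\Lambda)$ are negation-fixed as a function of the original $\pm$-paired blocks of $\Lambda$, while sufficiency requires showing the inverse ``unnesting'' can always redistribute the self-paired blocks into negation-conjugate pairs precisely when their number is even. The $\ncb$ portion, by contrast, should follow quickly from Proposition~\ref{alpha-prop}-style reasoning together with the insertion/deletion of $0$, and the block-count assertion is a one-line consequence of block-preservation of $\nc$.
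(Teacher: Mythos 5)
Your factorization of $\ncb$ into two separate bijections does not work, and this is a genuine gap rather than a presentational issue. The map $\nc$ is \emph{not} surjective from $\tNCB(n)$ onto all noncrossing $\Gamma \vdash \{0\}\cup[\pm n]$ with $\Gamma = -\Gamma$. An element of $\sPB(n)$ has no arcs of the form $(-i,i)$, and each symmetric crossing $(-i,j),(-j,i)$ that $\nc$ resolves produces such arcs in pairs; hence $\nc(\Lambda)$ always has an \emph{odd} number of blocks $B$ with $B=-B$ (the block of $0$, plus an even number created by uncrossing). Already for $n=1$ the symmetric noncrossing partition $\{\{-1,1\},\{0\}\}$ is missed. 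Correspondingly, your second factor ``delete $0$ from its block'' is not injective on the full set of symmetric noncrossing partitions of $\{0\}\cup[\pm n]$: both $\{\{-1,0,1\}\}$ and $\{\{-1,1\},\{0\}\}$ map to $\{\{-1,1\}\}$. The proposed inverse ``insert $0$ into the unique symmetric block'' is not well defined either, since the image of $\ncb$ typically has several nested self-symmetric blocks (or none). The two defects happen to cancel so that the composite $\ncb$ is a bijection, but establishing that requires exactly the parity bookkeeping you defer to the type D case: given a symmetric noncrossing $\Gamma'\vdash[\pm n]$ with $\ell$ self-symmetric blocks, the unique noncrossing symmetric preimage inserts $0$ as a singleton when $\ell$ is even and into the innermost self-symmetric block when $\ell$ is odd. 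Your type D discussion correctly identifies this parity question as the crux but explicitly leaves it unresolved (``fusions occur in a way that always produces them in even number'' is asserted, not proved, and $\nc$ does not fuse blocks---it preserves their number while redistributing elements).

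For comparison, the paper handles both maps at once with a single explicit inverse that makes the parity transparent: given symmetric noncrossing $\Lambda\vdash[\pm n]$, list the self-symmetric arcs $(-i_1,i_1),\dots,(-i_\ell,i_\ell)$ with $i_1<\dots<i_\ell$ and re-pair consecutive ones into symmetric crossings $(-i_{2k},i_{2k-1}),(-i_{2k-1},i_{2k})$; when $\ell$ is odd the leftover innermost arc absorbs $0$ via $(-i_1,0),(0,i_1)$, which is exactly the type B case, while $\ell$ even is the type D case. Your block-count argument for $\ncb$ is correct and equivalent to the paper's. To repair your proof you would need to (i) replace the false surjectivity claim for $\nc|_{\tNCB(n)}$ with the correct image (odd number of self-symmetric blocks) and (ii) actually carry out the even-parity verification in type D, e.g.\ by observing that each symmetric crossing of $\Lambda\in\tNCD(n)$ converts one $\pm$-pair of blocks into exactly two self-symmetric blocks.
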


Note that $\nc(\Lambda)$ has the same number of blocks as $\Lambda$ for any set partition $\Lambda$.

\begin{proof}
 Given $\Lambda \vdash[\pm n]$ with $\Lambda =-\Lambda$, let $\cX = \{ (-i_1,i_1),\dots,(-i_\ell,i_\ell)\}$ be the set of arcs of the form $(-i,i) \in \Arc(\Lambda)$, where $i_1 < \dots < i_\ell$, and define $\cY$ as the set of arcs
\[\cY = \begin{cases}\{ ( -i_{2k},i_{2k-1}), (-i_{2k-1},i_{2k}) : k \in [\tfrac{\ell}{2}]\}, &\text{if $\ell$ is even}, \\
\{ (-i_1,0), (0,i_1) \} \cup \{ ( -i_{2k},i_{2k+1}), (-i_{2k+1},i_{2k}) : k\in [\tfrac{\ell-1}{2}]\},&\text{if $\ell$ is odd}.
\end{cases}\]
Let $\Lambda'$ 
be the  set partition
of $\{0 \} \cup [\pm n]$ with arc set $(\Arc(\Lambda)- \cX)\cup \cY$, and when $\ell$ is even, let $\Lambda''$ be the  set partition
of $[\pm n]$ with arc set $(\Arc(\Lambda)- \cX)\cup \cY$.
  Then $\Lambda \mapsto \Lambda'$ is  the two-sided inverse of the first map in the lemma, while $\Lambda \mapsto \Lambda''$ is the two-sided inverse of the second map.
  The last remark concerning the numbers of blocks follows from the fact that $\ncb(\Lambda)$ partitions a set with one less element than $\Lambda$, yet has either equally many or one fewer arcs than
 $\Lambda$. 
\end{proof}

  The first identity in the following proposition is an immediate consequence of the previous lemma and Lemma \ref{obs1}.  The second part follows from the first, given the fact that $\CatD{n+1}{x} = \CatB{n}{x} + nx\Cat{n}{x}$, an identity which we will prove
 in a more general form as Corollary \ref{hanging}.
    
 \begin{proposition}\label{catb-prop} 
 $\ds\CatB{n}{x} = \sum_{k=0}^n \binom{n}{k}^2 x^{k}$ and $\ds\CatD{n+1}{x} = \sum_{k=0}^{n} \binom{n}{k} \binom{n+1}{k} x^k$
for $n\geq 0$.
\end{proposition}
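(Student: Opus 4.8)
The plan is to prove the two formulas by separate routes: the first by transporting the defining sum for $\CatB{n}{x}$ through the uncrossing bijections of Lemma \ref{uncross-obs} and Section \ref{nonnesting} down to the count of symmetric nonnesting partitions in Lemma \ref{obs1}, and the second by feeding the first formula into the recurrence $\CatD{n+1}{x} = \CatB{n}{x} + nx\,\Cat{n}{x}$ supplied by Corollary \ref{hanging}.

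For the first identity I would begin from $\CatB{n}{x} = \sum_{\Lambda \in \tNCB(n)} x^{|\Arc(\Lambda)|/2}$, so that the coefficient of $x^j$ is the number of $\Lambda \in \tNCB(n)$ with $|\Arc(\Lambda)| = 2j$. Since such a $\Lambda$ partitions the $(2n+1)$-element set $\{0\}\cup[\pm n]$, having $2j$ arcs is equivalent to having $2(n-j)+1$ blocks. I would then apply the bijection $\ncb$ of Lemma \ref{uncross-obs}, which sends these onto the noncrossing symmetric partitions $\Gamma \vdash [\pm n]$ with $\Gamma=-\Gamma$ and either $2(n-j)$ or $2(n-j)+1$ blocks; composing with the block-preserving bijection $\nc$ from Section \ref{nonnesting} identifies this target with the symmetric nonnesting partitions in $\NNB(n)$ having $2(n-j)$ or $2(n-j)+1$ blocks. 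By Lemma \ref{obs1} there are exactly $\binom{n}{n-j}^2 = \binom{n}{j}^2$ of these, which is the desired coefficient.

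For the second identity I would take the recurrence $\CatD{n+1}{x} = \CatB{n}{x} + nx\,\Cat{n}{x}$ as given and substitute the first formula together with the Narayana expansion $\Cat{n}{x} = \sum_k N(n,k)\,x^{n-k}$, where $N(n,k) = \tfrac1n\binom{n}{k}\binom{n}{k-1}$. After reindexing, $nx\,\Cat{n}{x} = \sum_k \binom{n}{k-1}\binom{n}{k}\,x^k$, so the coefficient of $x^k$ in $\CatD{n+1}{x}$ becomes $\binom{n}{k}^2 + \binom{n}{k-1}\binom{n}{k} = \binom{n}{k}\big(\binom{n}{k} + \binom{n}{k-1}\big)$, and Pascal's rule collapses this to $\binom{n}{k}\binom{n+1}{k}$, as claimed.

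The main obstacle is the bookkeeping in the first part, since $\ncb$ does not preserve the number of arcs: deleting $0$ merges the two arcs incident to it into one exactly when $0$ lies in a nonsingleton block of $\nc(\Lambda)$, so $\ncb$ either fixes the block count or lowers it by one, according to the parity of the number of central arcs $(-i,i)$. The one careful check, which is essentially the furthermore clause of Lemma \ref{uncross-obs}, is that this ambiguity matches the grouping ``$2k$ or $2k+1$ blocks'' appearing in Lemma \ref{obs1}: the $\ncb$-preimage of every $\Gamma$ with $2(n-j)$ or $2(n-j)+1$ blocks is a single $\Lambda$ with $2(n-j)+1$ blocks, so the two block-classes of symmetric noncrossing partitions fuse into the one block-class of $\tNCB(n)$ carrying the fixed power $x^{\,j}$. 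Once this matching is in place, both identities follow immediately.
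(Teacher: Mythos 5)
Your proposal is correct and follows essentially the same route as the paper: the first formula via the bijection $\ncb$ of Lemma \ref{uncross-obs} combined with the block-count statistics of Lemma \ref{obs1} (transported through $\nc$), and the second by substituting the first formula and the Narayana expansion into the recurrence $\CatD{n+1}{x} = \CatB{n}{x} + nx\Cat{n}{x}$ from Corollary \ref{hanging}. Your careful matching of the block-count classes $\{2k,2k+1\}$ under $\ncb$ is exactly the point the paper's ``furthermore'' clause is there to supply.
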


\def\Nar{\mathrm{Nar}}

\begin{remark}
This result shows that $\CatB{n}{x}$ is the Narayana polynomial of type $B_n$; i.e., the rank generating function of the lattice of noncrossing partitions of type $B_n$ introduced in \cite{Reiner_NC}.  By contrast, $\CatD{n}{x}$ is \emph{not} the Narayana polynomial of type $D_n$; however, the latter polynomial is equal to $\CatD{n}{x} + x\CatB{n-1}{x} - x\Cat{n-1}{x}$ for $n\geq 1$  (see \cite[Theorem 1.2]{typeD}). These observations imply that 
$\CatB{n}{1} = \binom{2n}{n}$
and 
$\CatD{n+1}{1} = \binom{2n+1}{n}$
are central binomial coefficients.
\end{remark}

\section{Identities in types B and D}
\label{last}


To apply the methods of Section \ref{classical} 
to our new constructions, we  begin by defining the appropriate analogue of the map $\shift : \sP(n,\AA) \to \sP(n+1,\AA)$.

Shifting the matrix of a set partition in $\sPB(n,\AA)$ (respectively, $\sPD(n,\AA)$) one column to the right yields the matrix of a set partition in $\sPD(n+1,\AA)$ (respectively, $\sPB(n,\AA)$), and corresponds to two injective maps which, with slight abuse of notation, we again denote by $\shift$:
\[\shift: \sPD(n,\AA) \to \sPB(n,\AA)\qquad\text{and}\qquad \shift: \sPB(n,\AA) \to \sPD(n+1,\AA).\]
Explicitly:
\begin{enumerate}
\item[$\bullet$] If $\Lambda \in \sPD(n,\AA)$ then $\shift(\Lambda) \in \sPB(n,\AA)$ has arc set  $\{ (f(i), f(j)+1) : (i,j) \in \Arc(\Lambda)\}$ and labeling map 
$(f(i),f(j)+1) \mapsto \Lambda_{ij}$, where $f(x) \omdef = \begin{cases} x,&\text{if }x< 0, \\ x-1,&\text{if $x> 0$}.\end{cases}$

\item[$\bullet$]  If $\Lambda \in \sPB(n,\AA)$ then $\shift(\Lambda) \in \sPD(n+1,\AA)$ has arc set  
$\{ (g(i), g(j) + \delta_{j} + 1) : (i,j) \in \Arc(\Lambda)\}$ and labeling map 
$(g(i), g(j) + \delta_{j} + 1) \mapsto \Lambda_{ij}$, where $g(x) \omdef = \begin{cases} x-1,&\text{if $x\leq 0$}, \\ x,&\text{if $x>0$}.\end{cases}$  

\end{enumerate}
One checks without difficulty that these definitions do in fact give set partitions belonging to $\sPB(n,\AA)$ and $\sPD(n+1,\AA)$.  For example,
\[ 
\shift\( \xy<0.0cm,-0.0cm> \xymatrix@R=-0.0cm@C=.1cm{
*{\bullet} \ar @/^0.5pc/ @{-} [r]^a &
*{\bullet} \ar @/^.9pc/ @{-} [rr]^b &
*{\bullet} \ar @/^.9pc/ @{-} [rr]^{-b}  &
*{\bullet}   &
*{\bullet} \ar @/^0.5pc/ @{-} [r]^{-a}&
*{\bullet} 
\\
-3 &
-2   & 
-1 &
+ 1 &
+ 2 &
+ 3 
}\endxy\)
=
 \xy<0.0cm,-0.0cm> \xymatrix@R=-0.0cm@C=.1cm{
*{\bullet} \ar @/^0.9pc/ @{-} [rr]^a &
*{\bullet} \ar @/^1.2pc/ @{-} [rrr]^b &
*{\bullet} \ar @/^1.2pc/ @{-} [rrr]^{-b}  &
*{\bullet}   &
*{\bullet} \ar @/^0.9pc/ @{-} [rr]^{-a}&
*{\bullet} &
*{\bullet} 
\\
-3 &
-2   & 
-1 &
0 &
+ 1 &
+ 2 &
+ 3 
}\endxy
\] 
and
\[
\shift\( 
\xy<0.0cm,-0.0cm> \xymatrix@R=-0.0cm@C=.1cm{
*{\bullet} \ar @/^0.9pc/ @{-} [rr]^a &
*{\bullet} \ar @/^1.2pc/ @{-} [rrr]^b &
*{\bullet} \ar @/^1.2pc/ @{-} [rrr]^{-b}  &
*{\bullet}   &
*{\bullet} \ar @/^0.9pc/ @{-} [rr]^{-a} &
*{\bullet} &
*{\bullet} 
\\
-3 &
-2   & 
-1 &
0 &
+ 1 &
+ 2 &
+ 3 
}\endxy
\)
=
\xy<0.0cm,-0.0cm> \xymatrix@R=-0.0cm@C=.1cm{
*{\bullet} \ar @/^1.2pc/ @{-} [rrr]^a &
*{\bullet} \ar @/^1.5pc/ @{-} [rrrr]^b &
*{\bullet} \ar @/^1.5pc/ @{-} [rrrr]^{-b}  &
*{\bullet}   &
*{\bullet} \ar @/^1.2pc/ @{-} [rrr]^{-a} &
*{\bullet} &
*{\bullet} &
*{\bullet} 
\\
-4 &
-3   & 
-2 &
-1 &
+ 1 &
+ 2 &
+ 3 &
+4
}\endxy
\]
which becomes obvious after noting that the matrices of the three set partitions are
\[ \barr{|c|c|c|c|c|c|}
\hline
0 & a & & & & \\\hline
 & 0 & &  b & & \\\hline
  & & 0 &  & -b &\\\hline
  & & & 0&& \\\hline
  & & & & 0 & -a \\\hline
  & & & & & 0\\\hline
  \earr \quad
   \barr{|c|c|c|c|c|c|c|}
\hline
0 &  & a & & & & \\\hline
 & 0 &  & &  b & & \\\hline
 & & 0&  &  & -b &\\\hline
 & & & 0& && \\\hline
&  & & &0 &  & -a \\\hline
&  & & & &0 & \\\hline
&  & & & & & 0\\\hline
  \earr \quad
     \barr{|c|c|c|c|c|c|c|c|}
\hline
0& &  & a & & & & \\\hline
&0 &  &  & &  b & & \\\hline
 &&0 & &  &  & -b &\\\hline
 & & &0& & && \\\hline
&  & & &0& &  & -a \\\hline
&  & & & &0& & \\\hline
&  & & & & &0& \\\hline
&  & & & & && 0\\\hline
  \earr
  \]

 It is straightforward to see that the map $\shift$ defines bijections
\be\label{b-shift} \ba &\sPD(n,\AA) \to \{ \text{2-regular elements of $\sPB(n,\AA)$}\},
\\
&
\sPB(n,\AA) \to \{ \text{2-regular elements of $\sPD(n+1,\AA)$}\},
\ea
\ee where, as previously, a set partition is 2-regular if  none of its blocks contain two consecutive integers $i$ and $i+1$.  
Consequently we may view $\shift$ as a map $\sPD(n,\AA) \to\sPB(n,\AA,\BB)$ and $\sPB(n,\AA) \to \sPD(n+1,\AA,\BB)$.

We define the feasible and poor elements of 
$\sPB(n)$ or $\sPD(n)$  exactly as for $\sP(n)$.  In addition, we say that a set partition is \emph{B-feasible} if none of its blocks contain exactly one nonzero element and \emph{B-poor} if none of its blocks contain more than two nonzero elements.
Observe that these notions are distinct from ``feasible'' and ``poor'' only for elements of $\sPB(n)$.
The following lemma, in analogy with Lemma \ref{shift-bij}, describes the action of $\shift$ on these sets of interest.

\begin{lemma}\label{b-shift-bij} The following restrictions of $\shift$ are bijections:
\begin{enumerate}
\item[(1)] $ \left\{ \text{Feasible elements of $\sPD(n,\AA)$} \right\} \to \left\{ \ba &\text{2-regular partitions $\Lambda \in \sPB(n,\AA)$ such that} \\&\text{$1+\max B \neq \min B'$ for all blocks $B,B' \in \Lambda$}\ea \right\}$.

\item[(2)] $\left  \{ \text{B-feasible elements of $\sPB(n,\AA)$} \right\} \to \left\{ \ba &\text{\small 2-regular  partitions $\Lambda\in \sPD(n+1,\AA)$ with} \\&\text{\small $1+\max B \neq \min B'$ for all blocks $B,B' \in \Lambda$}\ea \right\}$.

\item[(3)]
$\left\{ \text{Poor elements of $\tNCD(n,\AA)$} \right\}  \to \left\{ \text{2-regular elements of $\tNCB(n,\AA)$} \right\}$.

\item[(4)] 
$\left\{ \text{B-poor elements of $\tNCB(n,\AA)$} \right\}  \to \left\{ \text{2-regular elements of $\tNCD(n+1,\AA)$}\right\}$.

\end{enumerate}
\end{lemma}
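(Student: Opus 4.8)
The plan is to treat all four parts uniformly by leaning on \eqref{b-shift}, which already identifies $\shift$ as a bijection from $\sPD(n,\AA)$ (resp.\ $\sPB(n,\AA)$) onto the $2$-regular elements of $\sPB(n,\AA)$ (resp.\ $\sPD(n+1,\AA)$). Since the images in every part are automatically $2$-regular, it suffices for each part to show that $\shift(\Lambda)$ has the stated \emph{additional} property — the $1+\max B\neq\min B'$ condition in (1),(2), or membership in $\tNCX$ in (3),(4) — if and only if $\Lambda$ lies in the stated source class; combined with \eqref{b-shift} this then yields each bijection. Throughout I would argue in the matrix picture of Observation \ref{+def2}, where $\shift$ is literally the operation of moving every entry one column to the right, so that the $c$th row of the image equals the $c$th row of $\Lambda$ and the $(c+1)$st column of the image equals the $c$th column of $\Lambda$.

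For (1) and (2) I would mimic the argument behind Lemma \ref{shift-bij}(1): a configuration $1+\max B=\min B'$ in the image is exactly a superdiagonal position $(m,m+1)$ with $m$ and $m+1$ both vertices (genuine integer successors) whose row and column are both empty, and under the column shift this translates to the corresponding vertex of $\Lambda$ being a singleton. The delicate point is the central position. In (2) the image $\sPD(n+1)$ has no integer strictly between $-1$ and $1$, so the central superdiagonal position is not of the form $(m,m+1)$ with $m+1$ a successor and hence never yields a defect; through the shift this position corresponds to the vertex $0$ of the $\sPB(n)$ source, so the block $\{0\}$ is allowed to be a singleton. This is precisely why the source condition in (2) is \emph{$\mathrm{B}$-feasible} (no block with exactly one nonzero element) rather than feasible. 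In (1) the source $\sPD(n)$ has no vertex $0$ and the image $\sPB(n)$ has no such gap, so no exemption occurs and the plain feasible condition is correct. Matching these exemptions against the vertex sets gives the two bijections.

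For (3) and (4) I would carry out the crossing computation underlying Lemma \ref{shift-bij}(2), but now tracking the antidiagonal symmetry \eqref{b-def} that distinguishes $\tNCX$ from ordinary noncrossing partitions. A direct coordinate check shows that $\shift$ commutes with the antidiagonal mirror, so a crossing of $\shift(\Lambda)$ is a permitted \emph{symmetric} crossing exactly when the two arcs producing it form a mirror pair in $\Lambda$. Comparing the crossing inequalities before and after the shift shows that $\shift$ preserves all genuine crossings and creates one new crossing for each pair of arcs of $\Lambda$ sharing an endpoint, i.e.\ for each vertex internal to a block of size $\geq 3$. The real work is to decide when such a created crossing is permitted: solving the mirror-pair equations in the image coordinates shows this happens only when the shared endpoint sits at the exact center of the image, which requires the image to have an even number of coordinates. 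That is the case precisely for the $\sPD(n+1)$ image in (4), whose central coordinate is the vertex $0$ of the $\sPB(n)$ source; the resulting permitted crossing is exactly the one produced by a central block $\{-i,0,i\}$. Hence in (4) a block may carry up to two nonzero elements, giving the $\mathrm{B}$-poor condition, whereas in (3) the image $\sPB(n)$ has an odd number of coordinates, admits no central crossing, and forces the plain poor condition. In either part, once the source is poor (resp.\ $\mathrm{B}$-poor) no forbidden crossing is created, and the mirror-compatibility of $\shift$ shows $\shift(\Lambda)\in\tNCX$ iff $\Lambda$ lies in $\tNCD$ (resp.\ $\tNCB$).

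I expect the genuine obstacle to be parts (3) and (4), and within them the careful determination of which shift-created crossings are symmetric: this is where the parity of the image coordinate count (odd in type $\mathrm{B}$, even in type $\mathrm{D}$) enters, and where the ``poor'' versus ``$\mathrm{B}$-poor'' dichotomy — exactly mirroring the ``feasible'' versus ``$\mathrm{B}$-feasible'' dichotomy of (1),(2) — is forced rather than chosen. Parts (1) and (2), by contrast, should be routine once the singleton/defect correspondence and the single central exemption are isolated.
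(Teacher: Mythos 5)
Your proposal is correct and follows essentially the same route as the paper: parts (1) and (2) via the matrix picture, matching source rows/columns against image superdiagonal hooks and isolating the central index (source vertex $0$, image pair $(-1,1)$) as the exemption that turns ``feasible'' into ``B-feasible.'' For parts (3) and (4) the paper merely says the claims ``follow from similar considerations,'' and your crossing analysis --- shift preserves crossings and their permittedness, creates one new crossing per internal vertex, and such a created crossing can be a permitted mirror pair only at the central consecutive pair $(-1,1)$ of a type-D image --- is a correct filling-in of exactly those considerations, yielding the ``poor'' versus ``B-poor'' dichotomy as you describe.
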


\begin{proof} 
Parts (1) and (2) follow from the intuitive definition of $\shift$ on matrices after noting that
\begin{enumerate}
\item[$\bullet$] A partition $\Lambda$ in $\sPD(n,\AA)$ (respectively, $\sPB(n,\AA)$) is feasible (respectively, B-feasible) if and only if its matrix has a nonzero entry in either the $i$th row or $i$th column for each $i \in [2n]$ (respectively, for each $i \in [2n+1]\setminus\{n+1\}$).

\item[$\bullet$]
A partition $\Lambda $ in  $\sPB(n,\AA)$ (respectively, $\sPD(n+1,\AA)$) is 2-regular and has the property that $1+\max B \neq \min B'$ for all blocks $B,B' \in \Lambda$ if and only if its matrix has no nonzero entries on the superdiagonal but has at least one nonzero entry in the superdiagonal hook $\{ (i,j+1) : i < j\} \cup \{ (j,k) : j<k \}$ for each $i \in [2n]$ (respectively, for each $i \in [2n+2] \setminus \{ n+1\}$).  

\end{enumerate}
Parts (3) and (4) follow from similar considerations.
%
\end{proof}

For $\X \in \{\mathrm{B}, \mathrm{D}\}$, we let $\FeX{n}{x}$, $\wFeB{n}{x}$, $\MoX{n}{x}$, and $\wMoB{n}{x}$ denote the polynomials
\[
\begin{aligned}
 \ds\FeX{n}{x} &= \sum_{\text{Feasible }\Lambda \in \sPX(n)} x^{\frac{|\Arc(\Lambda)|}{2}}
 \\[-10pt] \\
\ds \wFeB{n}{x} &= \sum_{\text{B-feasible }\Lambda \in \tNCB(n)} x^{\frac{|\Arc(\Lambda)|}{2}}
\end{aligned}
\qquad\text{and}\qquad
\begin{aligned}
\ds \MoX{n}{x} &=\sum_{\text{Poor }\Lambda \in \tNCX(n)} x^{\frac{|\Arc(\Lambda)|}{2}}
 \\[-10pt] \\
\ds  \wMoB{n}{x} &= \sum_{\text{B-poor }\Lambda \in \tNCB(n)} x^{\frac{|\Arc(\Lambda)|}{2}}.
  \end{aligned}
 \]
 The next few results provide more explicit formulas for these functions.

To begin, observe that in analogy with (\ref{belld-eq}), we have
\be \FeD{n}{x} = \Fe{n}{2x},\qquad\text{for all $n\geq 0$}.\ee
In a different direction, note that a poor set partition in $\sPB(n)$ must contain the singleton $\{0\}$ as a block, and removing this block defines a bijection from the set of poor elements of $\sPB(n)$ to the set of poor elements of  $\sPD(n)$.
The first equality in the next proposition derives from this fact.

\begin{proposition}\label{motzkinb-prop} $\ds\MoB{n}{x} =\MoD{n}{x}= \sum_{k=0}^{\lfloor n/2 \rfloor} \binom{2k}{k} \binom{n}{2k} x^k$.
\end{proposition}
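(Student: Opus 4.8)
The first equality is the one indicated just before the statement, and I would dispatch it first. If $\Lambda$ is a poor element of $\tNCB(n)$, the block $B$ containing $0$ satisfies $B=-B$ and $|B|\le 2$, which forces $B=\{0\}$; deleting this singleton produces a poor element of $\tNCD(n)$ with exactly the same arc set, and re-inserting $\{0\}$ inverts the operation. Since neither the arcs nor the crossings change, this is an $|\Arc|$-preserving bijection between the poor elements of $\tNCB(n)$ and those of $\tNCD(n)$, whence $\MoB{n}{x}=\MoD{n}{x}$.

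For the second equality I would mirror the proof of Proposition \ref{motzkin-obs}. A poor $\Lambda\in\tNCD(n)$ with $2k$ arcs covers $4k$ elements of $[\pm n]$, and by the symmetry $\Lambda=-\Lambda$ exactly $2k$ of these are positive, forming a $2k$-subset $S\subseteq[n]$ (all remaining elements being singletons). Recording $S$ together with the structure induced on $S\cup(-S)$, and relabeling $S\cup(-S)$ to $[\pm 2k]$ by the unique order- and sign-preserving bijection, gives a bijection between poor elements with $2k$ arcs and pairs $(S,\Theta)$: here $S$ ranges over the $\binom{n}{2k}$ subsets of size $2k$, and $\Theta$ ranges over the poor, everywhere-matched (singleton-free) elements of $\tNCD(2k)$ — that is, symmetric perfect matchings of $[\pm 2k]$ having no arc $(-i,i)$ and whose only crossings occur between mirror arcs. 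Writing $M_{2k}$ for the number of such $\Theta$, and noting $x^{|\Arc(\Lambda)|/2}=x^k$, I obtain $\MoD{n}{x}=\sum_{k\ge0}\binom{n}{2k}M_{2k}\,x^k$, so everything reduces to the core count $M_{2k}=\binom{2k}{k}$.

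To evaluate $M_{2k}$ I would run through the crossing analysis dictated by the definition of $\tNCD$: two arcs with both endpoints positive never cross (their mirrors lie on the negative side, so such a crossing would not be a mirror crossing); a positive arc never crosses a mixed arc straddling $0$; and two mixed arcs cross precisely when they are mirror images of one another, which in fact always occurs. Encoding a symmetric matching as a perfect matching $f$ of the positive points $[2k]$ with a sign on each pair — $+$ for a pair giving a positive arc and its negative mirror, $-$ for a pair giving a mirror pair of mixed arcs — the admissibility conditions translate into: the pairs of $f$ are mutually noncrossing as chords of $[2k]$, and every $-$ pair is outermost (nested inside no other pair). Thus admissible $\Theta$ correspond to noncrossing perfect matchings $M$ of $[2k]$ in which an arbitrary subset of the outermost arcs is marked, giving $M_{2k}=\sum_{M}2^{o(M)}$, where $o(M)$ counts the outermost arcs of $M$.

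It then remains to prove $\sum_{M}2^{o(M)}=\binom{2k}{k}$, which I expect to be the main obstacle. The cleanest route is the arch decomposition: peeling off the leftmost outermost arc $\{1,2m\}$, which bounds an unmarked noncrossing matching inside (weight $\cC_{m-1}$) and is itself marked or not (weight $2$), the generating function $F(z)=\sum_{k\ge0}M_{2k}z^k$ satisfies $F=1/\bigl(1-2zC(z)\bigr)$, where $C(z)=\frac{1-\sqrt{1-4z}}{2z}$ generates noncrossing matchings. Since $2zC(z)=1-\sqrt{1-4z}$, this yields $F(z)=1/\sqrt{1-4z}=\sum_{k\ge0}\binom{2k}{k}z^k$, so $M_{2k}=\binom{2k}{k}$ and the proposition follows. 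Alternatively one could exhibit a direct bijection between these symmetric mirror-crossing perfect matchings of $[\pm 2k]$ and the $\binom{2k}{k}$ symmetric Dyck paths with $4k$ steps of Lemma \ref{obs2}; the delicate point either way is the crossing analysis that reduces the symmetric structures to $2$-colored noncrossing matchings.
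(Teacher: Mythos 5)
Your proof is correct, and after the shared opening moves it takes a genuinely different route from the paper's. The first equality (removing the forced singleton block $\{0\}$) and the reduction extracting the $2k$-subset $S\subseteq[n]$ of positive covered points, which leaves the count $M_{2k}$ of singleton-free poor elements of $\tNCD(2k)$ as the core, match the paper exactly. For $M_{2k}$, however, the paper never analyzes the crossing conditions of $\tNCD$ directly: it applies the map $\nc$ of Lemma \ref{uncross-obs} to convert these elements into genuinely noncrossing symmetric perfect matchings of $[\pm 2k]$ with an even number of blocks of the form $\{-i,i\}$, and then uses the step-encoding bijection $\varphi$ to identify those with symmetric Dyck paths with $4k$ steps, so that the count $\binom{2k}{k}$ comes from Lemma \ref{obs2} (which in turn rests on Athanasiadis's enumeration behind Lemma \ref{obs1}). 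You instead classify the admissible crossings by hand, encode a symmetric matching as a noncrossing perfect matching of $[2k]$ with an arbitrary subset of its outermost arcs marked, and evaluate $\sum_M 2^{o(M)}=\binom{2k}{k}$ via the arch decomposition $F=1/(1-2zC(z))=(1-4z)^{-1/2}$. Your case analysis checks out: any crossing between a positive arc and a mixed arc, or between arcs coming from distinct signed pairs, would force an equality of the form $(i,k)=(-l,-j)$ that cannot hold, so the only surviving crossings are the self-crossings of each mirror pair of mixed arcs, and this is exactly the condition that $f$ be noncrossing with all $-$ pairs outermost. What your route buys is self-containment --- it bypasses the uncrossing machinery of Section \ref{nonnesting} and the external input of Lemma \ref{obs1} --- at the price of the more delicate crossing analysis; the paper's route is shorter given the lemmas it has already established and keeps the symmetric-Dyck-path picture that reappears in Corollary \ref{2blocks}.
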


\begin{remark} The numbers $\{\MoB{n}{1}\}_{n=0}^\infty = (1, 1, 3, 7, 19, 51, 141,\dots)$ are the central trinomial coefficients \cite[A002426]{OEIS}, defined as the  coefficient of $x^n$ in  $(1+x+x^2)^n$.
\end{remark}

\begin{proof}
Since a poor element of $\sPD(n)$ with $2k$ arcs has $2(n-k)$ singleton blocks which come in pairs $\{ i\}, \{-i\}$, we have $\MoD{n}{x} = \sum_k \binom{n}{k} |\cX_k| x^k$ where $\cX_k$ is the set of poor partitions in $\tNCD(k)$ whose blocks all have size two.
By Lemma \ref{uncross-obs}, the map $\nc$ defines a bijection 
$\cX_k \to \cY_k$, where $\cY_k$ is the set of 
noncrossing set partitions $\Lambda\vdash [\pm k]$ with $\Lambda =-\Lambda$ which have $k$ blocks of size two and an even number of blocks of the form $\{-i,i\}$.  To prove the proposition it  suffices to show that $\cY_k$ is empty if $k$ is odd and that $|\cY_{2k}| = \binom{2k}{k}$. 

 Since the elements of $\cY_k$ are noncrossing and invariant under negation, the blocks of  a partition $\Lambda \in \cY_k$ which are not of the form $\{-i,i\}$ are of the form $\{i,j\}$ with $i,j > 0$ or $i,j < 0$.  Hence, removing all blocks of $\Lambda \in \cY_k$ which contain negative integers (and then shifting indices) produces a noncrossing partition of $[k-\ell]$ whose blocks all have size two, where $\ell$ is the number of blocks of $\Lambda$ of the form $\{ -i,i\}$.  Since $\ell$ is always even, if $k$ is odd then no such partitions exist and $|\cY_k| = 0$.  
 
Let $\cZ_k$ be the set of all noncrossing set partitions of $[2k]$ whose blocks all have size two.  Given $\Lambda \in \cZ_k$, let $\varphi(\Lambda) \in \cD_k$ be the Dyck path whose $i$th step is $(1,1)$ if $i$ is the smaller of the two elements in its part of $\Lambda$ and $(1,-1)$ otherwise.  One checks that $\varphi : \cZ_k \to \cD_k$ is a well-defined bijection (one recovers $\Lambda$ by numbering the 
 steps of $\varphi(\Lambda)$ from 1 to $2k$ and placing the pairs of numbers indexing each upstep $(1,1)$ and the following downstep $(1,-1)$ at the same height in blocks), and it is clear that if we view $\cY_{2k}$ as a subset of $\cZ_{2k}$, then $\varphi$ restricts to a bijection from $\cY_{2k}$ to the set of symmetric elements of $\cD_{2k}$.  Hence $|\cY_{2k}| = \binom{2k}{k}$ by Lemma \ref{obs2}, as required.
\end{proof}

Inspecting the formulas for $\Mo{n}{x}$ and $\MoB{n}{x}$ in Propositions \ref{motzkin-obs} and  \ref{motzkinb-prop} yields the following trivial but useful corollary.

\begin{corollary}\label{mob-cor} $\ds\MoB{n+2}{x} =\MoB{n+1}{x} + 2(n+1)x \Mo{n}{x}$ for $n \geq 0$.
\end{corollary}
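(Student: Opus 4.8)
The plan is to prove the identity by comparing coefficients of $x^k$ on both sides, using the closed forms supplied by Propositions \ref{motzkin-obs} and \ref{motzkinb-prop}. By those results the coefficient of $x^k$ in $\MoB{m}{x}$ equals $\binom{2k}{k}\binom{m}{2k}$, while the coefficient of $x^{k-1}$ in $\Mo{n}{x}$ equals $\cC_{k-1}\binom{n}{2k-2}$. Since multiplication by $x$ shifts indices, matching coefficients of $x^k$ reduces the claim, for each $k\geq 0$, to the purely numerical statement
\[\binom{2k}{k}\binom{n+2}{2k} = \binom{2k}{k}\binom{n+1}{2k} + 2(n+1)\cC_{k-1}\binom{n}{2k-2},\]
where the last term is read as $0$ when $k=0$, so that the $k=0$ case is the trivial identity $1=1$.

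For $k\geq 1$, I would first apply Pascal's rule in the form $\binom{n+2}{2k} = \binom{n+1}{2k} + \binom{n+1}{2k-1}$ to cancel the common summand $\binom{2k}{k}\binom{n+1}{2k}$ from both sides. This reduces the goal to
\[\binom{2k}{k}\binom{n+1}{2k-1} = 2(n+1)\cC_{k-1}\binom{n}{2k-2}.\]

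The second step is to rewrite the central binomial coefficient in terms of the Catalan number via $\binom{2k}{k} = 2(2k-1)\cC_{k-1}$, which follows at once from $\cC_{k-1} = \tfrac{1}{k}\binom{2k-2}{k-1}$ together with a direct factorial computation. After dividing through by $2\cC_{k-1}$, the target becomes the elementary relation $(2k-1)\binom{n+1}{2k-1} = (n+1)\binom{n}{2k-2}$, and both sides collapse to $\frac{(n+1)!}{(2k-2)!\,(n-2k+2)!}$, completing the verification.

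There is no real obstacle here, as the statement is an identity between explicit polynomials; the corollary is labeled ``trivial'' precisely because it falls out of the formulas in the two preceding propositions. The only points that warrant care are the index shift induced by the factor $x$ multiplying $\Mo{n}{x}$ on the right-hand side, and the convention for the absent constant term of $x\Mo{n}{x}$ when $k=0$; both are dispatched by the remarks above.
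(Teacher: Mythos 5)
Your proof is correct and takes essentially the same route as the paper, which states the corollary as following from "inspecting the formulas" in Propositions \ref{motzkin-obs} and \ref{motzkinb-prop}; you have simply written out that coefficient comparison in full, and the binomial manipulations all check out.
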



We now turn our attention to the polynomials $\wFeB{n}{x}$ and $\wMoB{n}{x}$ counting B-feasible and B-poor  noncrossing partitions in $\sPB(n,\AA)$. 

\begin{proposition}\label{tilde-prop} The following identities hold for $n\geq 0$:
\begin{enumerate}
\item[(1)] $\ds\wFeB{n}{x} =\FeB{n}{x} + \Fe{n}{2x} = 
\sum_{k=0}^n \binom{n}{k} \Fe{k}{2x} x^{n-k}$.
\item[(2)] $\ds\wMoB{n}{x}  = \MoB{n}{x} + nx \Mo{n-1}{x}=\sum_{k=0}^{\lceil n/2\rceil}  \binom{n}{k} \binom{n+1-k}{k}
 x^k$.
 \end{enumerate}
\end{proposition}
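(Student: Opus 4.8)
The plan is to prove each part by sorting the partitions counted by $\wFeB{n}{x}$ and $\wMoB{n}{x}$ according to their zero block, and then to read off the closed forms from the formulas for $\Fe{k}{\cdot}$, $\Mo{k}{\cdot}$ and $\MoB{k}{\cdot}$ already established.

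Part (1). I would group the B-feasible partitions in $\sPB(n)$ by the block $B_0$ containing $0$; since $\Lambda=-\Lambda$, this block is symmetric and contains an even number $2m$ of nonzero entries. If $m\geq 1$ then $B_0$ already has at least two elements, so B-feasibility forces every block to have size at least two and $\Lambda$ is genuinely feasible; these partitions contribute exactly $\FeB{n}{x}$. If $m=0$, i.e.\ $B_0=\{0\}$, deleting the singleton $\{0\}$ removes no arcs and leaves a feasible element of $\sPD(n)$, so by the identity $\FeD{n}{x}=\Fe{n}{2x}$ recorded earlier these contribute $\Fe{n}{2x}$; this gives the first equality. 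For the second equality I would instead keep track of $m$: the $m$ positive indices inside $B_0$ can be chosen in $\binom{n}{m}$ ways and determine $B_0$ together with its $2m$ arcs (weight $x^m$), while the remaining $n-m$ index pairs carry an arbitrary feasible partition in $\sPD(n-m)$, contributing $\FeD{n-m}{x}=\Fe{n-m}{2x}$. Re-indexing by $k=n-m$ yields $\sum_{k}\binom{n}{k}\Fe{k}{2x}x^{n-k}$. Both steps are clean precisely because no noncrossing condition intervenes.

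Part (2). The same zero-block split gives the first equality, but now inside $\tNCB(n)$ with the crossing rule active, and B-poorness restricts $|B_0|$ to $1$ or $3$. When $B_0=\{0\}$ the complement is a poor element of $\tNCD(n)$, contributing $\MoD{n}{x}=\MoB{n}{x}$ by Proposition~\ref{motzkinb-prop}. When $B_0=\{0,i,-i\}$ the block supplies the two arcs $(-i,0),(0,i)$ (weight $x$), and I must show that, summed over $i\in[n]$, the admissible remainders are counted by $nx\Mo{n-1}{x}$: the long arcs $(-i,0),(0,i)$ forbid, through the $\tNCB$ crossing condition, any surviving arc with exactly one endpoint strictly between $-i$ and $i$, and after applying the uncrossing map $\nc$ of Lemma~\ref{uncross-obs} to the symmetric crossings that remain, one is left with a poor noncrossing partition. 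The closed form then follows by inserting $\MoB{n}{x}=\sum_k\binom{2k}{k}\binom{n}{2k}x^k$ (Proposition~\ref{motzkinb-prop}) and $\Mo{n-1}{x}=\sum_k\cC_k\binom{n-1}{2k}x^k$ (Proposition~\ref{motzkin-obs}) and verifying the coefficient identity $\binom{2k}{k}\binom{n}{2k}+n\cC_{k-1}\binom{n-1}{2k-2}=\binom{n}{k}\binom{n+1-k}{k}$, a routine Vandermonde-type manipulation.

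The hard part is the $B_0=\{0,i,-i\}$ case. It cannot be settled one value of $i$ at a time: a direct count shows the number of admissible remainders depends on $i$ (for $n=4$ the counts are $6,2,1,3$ as $i=1,2,3,4$), and only the total equals $n\,\Mo{n-1}{1}$. Hence the bijection I need is global, exchanging the choice of $i$ together with the two-sided constrained remainder for a single index in $[n]$ and an unconstrained poor noncrossing partition of $[n-1]$; pinning down this exchange through the interaction of the removed long arcs with $\nc$ is the crux. Everything else is bookkeeping against results proved before this proposition.
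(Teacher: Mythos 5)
Part (1) of your proposal is correct and is essentially the paper's argument: split the B-feasible elements of $\sPB(n)$ according to the block containing $0$, and re-index. (One small caution: the displayed definition of $\wFeB{n}{x}$ in the paper sums over $\tNCB(n)$, but the intended and used meaning is a sum over $\sPB(n)$, which is what you assume.)

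Part (2) contains a genuine gap, and you name it yourself: after the zero-block decomposition you need
$\sum_{i=1}^{n} x\,\Mo{i-1}{x}\,\MoB{n-i}{x} = n x \Mo{n-1}{x}$, i.e.\ the contribution of the partitions with $B_0=\{-i,0,i\}$, summed over $i$, collapses to $nx\Mo{n-1}{x}$. You correctly observe that this cannot be done term-by-term and that a ``global'' exchange is required, but you do not supply it --- you only describe the shape of the bijection you would need. The Vandermonde-type coefficient identity you propose to check only establishes the \emph{second} equality in (2) (that $\MoB{n}{x}+nx\Mo{n-1}{x}$ equals the closed form); it does nothing toward the first equality, which is exactly where the convolution identity lives. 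The paper closes this gap without any bijection: it derives the recurrence $\MoB{n+2}{x}=\MoB{n+1}{x}+2x\sum_{k=0}^{n}\Mo{k}{x}\MoB{n-k}{x}$ combinatorially, by classifying poor elements of $\tNCD(n+2)$ according to whether the blocks of $\pm(n+2)$ are singletons or contain a second element, and then subtracts the already-proved closed-form recurrence $\MoB{n+2}{x}=\MoB{n+1}{x}+2(n+1)x\Mo{n}{x}$ (Corollary \ref{mob-cor}) to conclude $\sum_{k=0}^{n}\Mo{k}{x}\MoB{n-k}{x}=(n+1)\Mo{n}{x}$. You would need this step (or an actual bijection) to complete your proof. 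As a secondary point, your sanity check is miscomputed: for $n=4$ the counts at $x=1$ are $\Mo{i-1}{1}\MoB{4-i}{1}=7,3,2,4$ for $i=1,2,3,4$, summing to $16=4\cdot\Mo{3}{1}$, not $6,2,1,3$ summing to $12$; your stated numbers would actually contradict the identity you are trying to prove.
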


\begin{remark} We have $\wFeB{n}{1} = \Bell{n}{2}$  since splitting the block containing $0$ into singletons and then removing $\{0\}$ defines a bijection from the set of B-feasible elements of $\sPB(n)$ to $\sPD(n)$; this also follows from the first remark to Theorem \ref{identities}.
The numbers $\{\wMoB{n}{1}\}_{n=0}^\infty = (	1, 1, 2, 5, 13, 35, 96,\dots)$ count the directed animals of size $n+1$ \cite[A005773]{OEIS}.  \end{remark}

\begin{proof}
A B-feasible element of $\sPB(n)$ is either feasible or contains $\{0\}$ as a block, and removing the block $\{0\}$ gives a bijection from the partitions of the latter kind to the feasible elements of $\sPD(n)$.  We obtain the second equality in (1) by noting that $\binom{n}{k} \Fe{k}{2x}x^{n-k}$ is the number of B-feasible elements of $\sPB(n)$ in which 0 belongs to a block with exactly $2k+1$ elements. 

For part (2), we observe that $\wMoB{n}{x} = \MoB{n}{x} + \sum_{k=1}^n \wMoB{n,k}{x}$ where $ \wMoB{n,k}{x}$ is the sum of $x^{\frac{|\Arc(\Lambda)|}{2}}$ over all B-poor set partitions $\Lambda \in \tNCB(n)$ which possess $\{-k,0,k\}$ as a block.  In such partitions, all remaining blocks are subsets of either $\{1,2,\dots,k-1\}$, $\{-1,-2,\dots,-k+1\}$, or $[\pm n] \setminus [\pm k]$.  The blocks contained in the first two of these sets are reflections of each other and determine a poor element of $\NC(k-1)$, while the blocks contained in $[\pm n] \setminus [\pm k]$ determine a poor element of $\tNCD(n-k)$.  It follows that $\wMoB{n,k}{x} =x \Mo{k-1}{x}\MoB{n-k}{x}$.

By considering whether the blocks of $\pm (n+2)$ in a poor element of $\tNCD(n+2,\AA)$ are singletons or contain a second element in $[\pm(n+1)]$, one obtains the recurrence
$\MoB{n+2}{x} = \MoB{n+1}{x} + 2x\sum_{k=0}^{n} \Mo{k}{x}\MoB{n-k}{x}$ for $n\geq 0$. Subtracting  the expression in Corollary \ref{mob-cor} from this equation shows that 
$\sum_{k=1}^n \wMoB{n,k}{x} = n x \Mo{n-1}{x}$, which 
then  gives the formula in (2).
\end{proof}


The following corollary 
 will be of use in the proof of Theorem \ref{b-identities}.  

\begin{corollary}\label{2blocks} The following identities hold for $n\geq 0$:
\begin{enumerate}
\item[(1)]
There are $\binom{2n}{n}$  elements of $\tNCD(2n)$ whose blocks all have size two.

\item[(2)] There are no elements of $\tNCD(2n+1)$ whose blocks all have size two. 

\item[(3)]
There are $\binom{n}{\lfloor n/2\rfloor}$  B-poor elements of $\tNCB(n)$ with no nonzero singleton blocks.
\end{enumerate}
\end{corollary}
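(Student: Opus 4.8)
Parts (1) and (2) require almost no new work: an element of $\tNCD(k)$ all of whose blocks have size two is precisely an element of the set $\cX_k$ considered in the proof of Proposition \ref{motzkinb-prop}, and that proof already shows, via the bijection $\nc$ onto the sets $\cY_k$, that $|\cX_{2m}| = \binom{2m}{m}$ and $|\cX_{2m+1}| = 0$. Taking $k = 2n$ gives (1) and $k = 2n+1$ gives (2). For (2) I would alternatively record the one-line parity argument: $\tNCD(2n+1)$ partitions a $(4n+2)$-element set, so an all-size-two partition would need $2n+1$ blocks, yet the blocks of a type D partition come in negation pairs $\{B,-B\}$ with $B \neq -B$ and so are even in number, a contradiction.

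For (3) I would first pin down the objects. Being B-poor and having no nonzero singleton forces every block not containing $0$ to have size exactly two and the block $B_0 \ni 0$ to be either $\{0\}$ or $\{-j,0,j\}$. No size-two block can equal $\{-i,i\}$, since the arc $(-i,i)$ is forbidden, so the non-$0$ blocks come in genuine negation pairs; counting elements then forces $n$ even in the case $B_0 = \{0\}$ and $n$ odd in the case $B_0 = \{-j,0,j\}$.

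The cleanest route is to count these partitions through $\ncb$. Applying $\ncb$ uncrosses the partition and then deletes $0$ from its block. The two arcs incident to $0$ never cross anything and the remaining blocks already form a matching, so $\nc$ leaves $B_0$ fixed and keeps the rest a perfect matching; deleting $0$ then turns $\{-j,0,j\}$ into $\{-j,j\}$. Thus $\ncb$ carries our set into the set of noncrossing, negation-invariant perfect matchings of $[\pm n]$, and the explicit inverse of $\ncb$ from Lemma \ref{uncross-obs} carries that set back (the parity of the number of $\{-i,i\}$ blocks deciding whether $0$ reappears as a singleton or inside $\{-j,0,j\}$), so the restriction is a bijection. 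Under the order isomorphism $[\pm n] \cong [2n]$ and the standard matching--Dyck-path correspondence, negation invariance becomes reflection symmetry, so the count equals the number of symmetric Dyck paths of semilength $n$. The main obstacle is this last enumeration: Lemma \ref{obs2} supplies only the even-semilength case, so I would establish the general value $\binom{n}{\lfloor n/2\rfloor}$ by cutting a symmetric Dyck path at its axis of symmetry into a single $\pm1$ path of length $n$ that stays weakly above the horizontal axis, such paths being counted by $\binom{n}{\lfloor n/2\rfloor}$. A fallback avoiding symmetric paths handles $n$ even by deleting $\{0\}$ and quoting part (1), and $n$ odd by the region decomposition in the proof of Proposition \ref{tilde-prop}(2), which reduces the count to $\sum_{i\geq 0} \cC_i \binom{2(N-i)}{N-i}$ with $N = (n-1)/2$; on that route the obstacle is instead the classical Catalan convolution $\sum_{i=0}^{N} \cC_i \binom{2(N-i)}{N-i} = \binom{2N+1}{N}$.
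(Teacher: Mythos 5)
Your parts (1) and (2) coincide with the paper's argument: both reduce to the computation already carried out for Proposition \ref{motzkinb-prop}, since the desired counts are exactly $|\cX_{2n}|$ and $|\cX_{2n+1}|$ from that proof (equivalently, the coefficient of $x^{n}$ in $\MoD{2n}{x}$, resp.\ the vanishing of the half-integer coefficient in $\MoD{2n+1}{x}$); your parity remark for (2) is a harmless alternative. For part (3), however, you take a genuinely different route. The paper splits according to whether the block of $0$ is $\{0\}$ or $\{-j,0,j\}$ and reads the answer off as the coefficient of $x^{n/2}$ in $\MoD{n}{x}$ plus the coefficient of $x^{(n+1)/2}$ in $\wMoB{n}{x}$, both already available in closed form from Propositions \ref{motzkinb-prop} and \ref{tilde-prop}; exactly one term survives for each parity of $n$ and equals $\binom{n}{\lfloor n/2\rfloor}$. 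You instead push the whole set through $\ncb$ onto the noncrossing negation-invariant perfect matchings of $[\pm n]$ and count symmetric Dyck paths of semilength $n$. Your verification that the restricted $\ncb$ is a bijection is sound (arcs at $0$ cannot participate in crossings in $\tNCB(n)$, and the explicit inverse in Lemma \ref{uncross-obs} visibly returns a B-poor partition with no nonzero singletons), and you correctly flag that Lemma \ref{obs2} covers only even semilength, supplying the missing case by folding a symmetric path at its axis into a nonnegative $\pm1$ walk of length $n$, of which there are $\binom{n}{\lfloor n/2\rfloor}$. The paper's route buys brevity, being pure coefficient extraction from formulas already proved; yours buys a uniform bijective explanation valid for both parities at once, at the cost of one standard enumerative fact not stated in the paper. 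Your fallback via the convolution $\sum_{i=0}^{N}\cC_i\binom{2(N-i)}{N-i}=\binom{2N+1}{N}$ is also correct but merely trades one classical identity for another.
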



\begin{proof}
All blocks of an element of $\tNCD(n)$ have size two if and only if the partition has exactly $n$ arcs; hence, 
the number counted in part (1) is the coefficient of $x^{n/2}$ in $\MoD{n}{x}$.
An element of $\tNCB(n)$ is B-poor and has no nonzero singleton blocks 
if and only if 
either 
the partition contains $\{ 0\}$ as a block such that removing this block produces an element of $\tNCD(n)$ whose blocks all have size two, or if the partition has $n+1$ arcs.
Hence the 
 number counted in part (2) is the sum of the number in (1) and the coefficient of $x^{(n+1)/2}$ in $\wMoB{n}{x}$.
\end{proof}

Serving as an analogue for Theorem \ref{main-thm},  the following result 
 shows that 
$\Bell{n}{2x}$  and $\MoB{n}{x}$ count the  $\LinB(n,\BB)$-orbits in $\sPB(n,\AA,\BB)$ and $\tNCB(n,\AA,\BB)$, while 
$\BellB{n}{x}$  and $\wMoB{n}{x}$ count the  $\LinD(n+1,\BB)$-orbits in $\sPD(n+1,\AA,\BB)$ and $\tNCD(n+1,\AA,\BB)$.

\begin{theorem}\label{b-thm} Let $n$ be a positive integer.
\begin{enumerate}
\item[(1)]
 The correspondence $\Lambda \mapsto \{ \alpha + \shift(\Lambda) : \alpha \in \LinB(n,\BB)\}$
  is a bijection 
\[ \ba \sPD(n,\AA)&\to
\left\{  \LinB(n,\BB)
\text{-orbits in }\sPB(n,\AA,\BB)\right\},
  \\
 \left \{ \text{Poor elements of $\tNCD(n,\AA)$}\right\} &\to  
\left\{ \LinB(n,\BB)\text{-orbits in } \tNCB(n,\AA,\BB)\right\}.
   \ea
  \] 
  Furthermore,  the cardinality of the $\LinB(n,\BB)$-orbit of $\shift(\Lambda)$ is $|\BB|^{s/2}$, where $s$ is the number of singleton blocks of $\Lambda \in \sPD(n,\AA)$.
  
  \item[(2)] 
   The correspondence $\Lambda \mapsto \{ \alpha + \shift(\Lambda) : \alpha \in \LinD(n,\BB)\}$
  is a bijection 
\[ \ba \sPB(n-1,\AA)&\to
\left\{  \LinD(n,\BB)
\text{-orbits in }\sPD(n,\AA,\BB)\right\},
  \\
 \left \{ \text{B-poor elements of $\tNCB(n-1,\AA)$}\right\} &\to  
\left\{ \LinD(n,\BB)\text{-orbits in } \tNCD(n,\AA,\BB)\right\}.
   \ea
  \] 
The cardinality of the $\LinD(n,\BB)$-orbit of $\shift(\Lambda)$ is $|\BB|^{\lfloor s/2\rfloor }$, where $s$ is the number of singleton blocks of $\Lambda \in \sPB(n-1,\AA)$.

  \end{enumerate}
\end{theorem}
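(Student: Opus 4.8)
The plan is to follow the proof of Theorem \ref{main-thm} almost verbatim, swapping the type A tools for their type B and D counterparts and then recomputing the orbit sizes to account for the negation symmetry (\ref{b-def}). The key structural input is that, exactly as in the classical case (Observation \ref{+obs}), adding an element of $\LinB(n,\BB)$ or $\LinD(n,\BB)$ alters a set partition only in its covering arcs $\Cov(\Lambda)$. Consequently each orbit modifies only the superdiagonal of the matrix, and within each orbit there is a unique 2-regular element, namely the one obtained by cancelling every active covering arc and leaving every free superdiagonal hook empty. This is the analogue of the opening sentence of the proof of Theorem \ref{main-thm}, and it reduces the first bijection in each part to identifying 2-regular elements with shift images.

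First I would invoke (\ref{b-shift}): the 2-regular elements of $\sPB(n,\AA,\BB)$ are exactly the images $\shift(\Gamma)$ for $\Gamma \in \sPD(n,\AA)$, and the 2-regular elements of $\sPD(n,\AA,\BB)$ are exactly the images $\shift(\Gamma)$ for $\Gamma \in \sPB(n-1,\AA)$. Combined with the previous paragraph, this yields the two $\sP$-level bijections. For the noncrossing statements I would use that the action preserves $\tNCB(n,\AA,\BB)$ and $\tNCD(n,\AA,\BB)$, so an orbit lies in $\tNCX(n,\AA,\BB)$ if and only if its unique 2-regular representative does; Lemma \ref{b-shift-bij}(3) then identifies the 2-regular elements of $\tNCB(n,\AA)$ with the poor elements of $\tNCD(n,\AA)$, and Lemma \ref{b-shift-bij}(4) (with $n$ replaced by $n-1$) identifies the 2-regular elements of $\tNCD(n,\AA)$ with the B-poor elements of $\tNCB(n-1,\AA)$. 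This gives the second bijection in each part.

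For the orbit sizes, set $\Lambda = \shift(\Gamma)$. Since $\Lambda$ has no covering arcs we have $\Arc(\alpha)\cap\Arc(\Lambda)=\varnothing$ for every $\alpha$, and as in the proof of Theorem \ref{main-thm} acting by $\alpha$ installs a covering arc at a superdiagonal position exactly when $\alpha$ is nonzero there and the associated superdiagonal hook is empty in the matrix of $\Lambda$. Thus the orbit size is $|\BB|$ raised to the number of \emph{independent} empty hooks, where independence refers to the superdiagonal positions that may be prescribed freely in $\LinB(n,\BB)\cong\BB^{n}$ or $\LinD(n,\BB)\cong\BB^{n-1}$. Translating empty hooks back through $\shift$ exactly as in the classical argument, the hook over a superdiagonal position is empty precisely when the corresponding element is a singleton block of $\Gamma$.

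The main obstacle, and the only genuine departure from Theorem \ref{main-thm}, is the bookkeeping forced by the negation symmetry. In type B the superdiagonal positions of the $(2n+1)\times(2n+1)$ matrix split into $n$ symmetric pairs with no self-paired position, and the label at one member of a pair determines the label at the other through (\ref{b-def}); since the singleton blocks of $\Gamma\in\sPD(n,\AA)$ themselves occur in pairs $\{i\},\{-i\}$, the $s$ empty hooks form $s/2$ symmetric pairs, each contributing a single factor of $|\BB|$, for an orbit of size $|\BB|^{s/2}$. In type D the $2n\times 2n$ matrix has one self-paired middle superdiagonal position, corresponding to the forbidden arc $(-1,1)$, which is precisely why $\LinD(n,\BB)\cong\BB^{n-1}$; the singleton $\{0\}$ of $\Gamma\in\sPB(n-1,\AA)$ maps to exactly this middle hook and so contributes no free parameter, whereas the remaining singletons again pair off. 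Hence if $\Gamma$ has $s$ singletons then either $\{0\}$ is one of them and the rest form $(s-1)/2$ symmetric pairs, or it is not and they form $s/2$ pairs, so in both cases the number of independent free pairs is $\lfloor s/2\rfloor$ and the orbit has size $|\BB|^{\lfloor s/2\rfloor}$. Carrying out these symmetry counts carefully, in place of the clean classical count, is where the work lies.
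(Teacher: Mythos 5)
Your proposal is correct and follows essentially the same route as the paper: the uniqueness of the 2-regular representative in each orbit combined with (\ref{b-shift}) and Lemma \ref{b-shift-bij} gives the bijections, and the orbit sizes come from counting empty superdiagonal hooks, paired off by the negation symmetry (with the middle position excluded in type D, accounting for the floor). Your handling of the singleton $\{0\}$ versus the nonzero singletons matches the paper's remark that $\lfloor s/2\rfloor$ is half the number of nonzero singleton blocks.
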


In part (2), $\lfloor s/2\rfloor$  is half the number of \emph{nonzero} singleton blocks of $\Lambda \in \sPB(n-1,\AA)$.

\begin{proof}
The proof is quite similar to that of Theorem \ref{main-thm}.  Let $\X \in \{ \mathrm{B}, \mathrm{D}\}$. As in the earlier proof, the definition of $+$ implies that each $\LinX(n,\BB)$-orbit in $\sPX(n,\AA,\BB)$ contains a unique 2-regular element, and this suffices by (\ref{b-shift}) and Lemma \ref{b-shift-bij} to show that the maps in (1) and (2) are bijections.  

If $\alpha \in \LinX(n,\BB)$ then $(i,i+1) \in \Arc(\alpha)$ if and only if $(-i-1,-i) \in \Arc(\alpha)$ and the pair of arcs $(i,i+1)$, $(-i-1,-i)$ have opposite labels.  Also, if $\alpha \in \LinD(n,\BB)$ then $(-1,1)$ never belongs to $\Arc(\alpha)$.
Consequently, from our definition of $+$ via Observation \ref{+def2}, it follows that if $\Lambda$ belongs to  $\sPD(n,\AA)$ (respectively, $\sPB(n-1,\AA)$) then 
the orbit of $\shift(\Lambda)$ has size $|\BB|^{s/2}$, where $s$ is the  number the superdiagonal hooks $\{ (i,j+1) : i<j\} \cup \{(j,k) : j<k\}$ for $j \in [2n]$ (respectively, for $j \in [2n-1]\setminus\{n\}$) containing no nonzero entries in the matrix of $\shift(\Lambda)$. In both cases, the intuitive definition of $\shift$ implies that $s$ is the (even) number of nonzero singleton blocks in $\Lambda$.
\end{proof}

We may now prove the type B and D analogue of Theorem \ref{identities}.

\begin{theorem}\label{b-identities}
If $n$ is a nonnegative integer then the following identities hold:
\begin{itemize}
\item[(1)]
$\BellB{n}{x,y} =\ds \sum_{k=0}^n \binom{n}{k} \Bell{k}{2x} y^{n-k} =\sum_{k=0}^n \binom{n}{k} \Fe{k}{2x} (y+1)^{n-k}$.

\item[(2)]
$  \BellD{n+1}{x,y} = \ds\sum_{k=0}^n \binom{n}{k} \BellB{k}{x} y^{n-k} = \sum_{k=0}^n \binom{n}{k} \wFeB{k}{x} (y+1)^{n-k}$.

\item[(3)]
$\ds \CatB{n}{x,y} = \sum_{k=0}^n \binom{n}{k} \MoB{k}{x} y^{n-k} = \sum_{k=0}^n \binom{2k}{k}  \binom{n}{2k}x^k (y+1)^{n-2k} $.

\item[(4)]
$\ds \CatD{n+1}{x,y} = \sum_{k=0}^n \binom{n}{k} \wMoB{k}{x} y^{n-k} = \sum_{k=0}^n \binom{n}{k} \binom{k}{\lfloor k/2\rfloor} x^{\lceil k/2\rceil} (y+1)^{n-k} $.

\end{itemize}
\end{theorem}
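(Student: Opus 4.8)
The plan is to transcribe the proof of Theorem \ref{identities} into types B and D, using Theorem \ref{b-thm} in place of Theorem \ref{main-thm}. In each of the four parts the left-hand polynomial is the cardinality of a set of labeled partitions, taking $|\AA|=x+1$ and $|\BB|=y+1$: namely $\sPB(n,\AA,\BB)$ in (1), $\sPD(n+1,\AA,\BB)$ in (2), $\tNCB(n,\AA,\BB)$ in (3), and $\tNCD(n+1,\AA,\BB)$ in (4). Each such partition has its cover arcs labeled from $\BB$ and its remaining arcs from $\AA$, and by the $\pm$-symmetry condition (\ref{b-def}) all these arcs occur in negation pairs. I would compute each cardinality in two different ways, matching the two sums on the right.

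For the first sum in each part I would group the counted partitions by the number $n-k$ of cover-arc pairs. There are $\binom{n}{n-k}=\binom{n}{k}$ ways to choose which of the superdiagonal pair-positions are occupied and $y^{n-k}$ ways to label them from $\BB$; deleting the corresponding symmetric rows and columns of the matrix leaves the matrix of $\shift(\Lambda)$ for a uniquely determined partition $\Lambda$ of the smaller ambient type, exactly as in Theorem \ref{identities}. Here the shift bijections (\ref{b-shift}) and Lemma \ref{b-shift-bij} identify the reduced cores: an arbitrary element of $\sPD(k,\AA)$ in (1), of $\sPB(k,\AA)$ in (2), a poor element of $\tNCD(k,\AA)$ in (3), and a B-poor element of $\tNCB(k,\AA)$ in (4). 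The resulting inner factors are therefore $\BellD{k}{x}=\Bell{k}{2x}$, $\BellB{k}{x}$, $\MoD{k}{x}=\MoB{k}{x}$, and $\wMoB{k}{x}$, using (\ref{belld-eq}) and Proposition \ref{motzkinb-prop}, which yields precisely the first sum in each of (1)--(4).

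For the second sum I would instead group by the size of the $\LinB$- or $\LinD$-orbit. By Theorem \ref{b-thm} these orbits are indexed by a smaller partition $\Lambda$, and the orbit of $\shift(\Lambda)$ has size $(y+1)^m$, where $m$ is half the number of nonzero singleton blocks of $\Lambda$. Thus I need only enumerate the index partitions with exactly $2(n-k)$ nonzero singletons, or $2(n-2k)$ in part (3). Choosing which coordinate pairs are singletons contributes $\binom{n}{k}$, or $\binom{n}{2k}$ in (3), and the non-singleton core is a feasible element of $\sPD(k,\AA)$ in (1), a B-feasible element of $\sPB(k,\AA)$ in (2), an all-size-two poor element of $\tNCD(2k,\AA)$ in (3), or a B-poor element of $\tNCB(k,\AA)$ with no nonzero singletons in (4). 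These cores are counted respectively by $\FeD{k}{x}=\Fe{k}{2x}$, by $\FeB{k}{x}+\Fe{k}{2x}=\wFeB{k}{x}$ via Proposition \ref{tilde-prop}(1), by $\binom{2k}{k}x^{k}$ via Corollary \ref{2blocks}(1), and by $\binom{k}{\lfloor k/2\rfloor}x^{\lceil k/2\rceil}$ via Corollary \ref{2blocks}(3); multiplying by the appropriate power of $(y+1)$ recovers the second sum in each part. Since both groupings enumerate the same set, the two sums agree and equal the left-hand polynomial.

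The main obstacle is the $\pm$-symmetric bookkeeping that the halved exponents, floors, and ceilings encode. Unlike the classical case, one must track the self-paired central block—$\{0\}$ in type B—which is exactly why the orbit size involves $\lfloor s/2\rfloor$ rather than $s$, and why the cores in parts (2) and (4) are the B-feasible and B-poor variants rather than the naive feasible and poor ones. The most delicate point is part (4): I must invoke Corollary \ref{2blocks}(3) together with the fact implicit in its proof that every B-poor element of $\tNCB(k,\AA)$ with no nonzero singletons has exactly $\lceil k/2\rceil$ arc pairs, so that its weight is the uniform power $x^{\lceil k/2\rceil}$. A second subtlety is the identification in part (2) of $\wFeB{k}{x}$ as the count of \emph{all} B-feasible elements of $\sPB(k,\AA)$ and not merely the noncrossing ones, which is supplied by Proposition \ref{tilde-prop}(1).
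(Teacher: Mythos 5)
Your proposal is correct and follows exactly the route the paper intends: the paper's own proof simply declares the argument ``entirely analogous'' to that of Theorem \ref{identities}, citing Corollary \ref{2blocks}, Equation (\ref{b-shift}), Lemma \ref{b-shift-bij}, and Theorem \ref{b-thm}, and you have supplied precisely the details it leaves to the reader. Your two flagged subtleties are real and correctly resolved --- the uniform weight $x^{\lceil k/2\rceil}$ in part (4), and the reading of $\wFeB{k}{x}$ as a count over all B-feasible elements of $\sPB(k)$ as in the proof of Proposition \ref{tilde-prop}(1).
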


\begin{proof}
As in the proof of Theorem \ref{identities}, if $|\AA| = x+1 $ and $|\BB| = y+1$, then in each part, the terms in first sum counts partitions (in $\sPB(n,\AA,\BB)$, $\sPD(n+1,\AA,\BB)$, $\tNCB(n,\AA,\BB)$, or $\tNCD(n+1,\AA,\BB)$, respectively) whose  matrices have  the same number of 
 nonzero entries on the superdiagonal, while the terms in the second sum count partitions whose $\LinB(n,\BB)$- or $\LinD(n+1,\BB)$-orbits have the same fixed size. 
Checking the details of this assertion$-$using Corollary \ref{2blocks}, Equation (\ref{b-shift}), Lemma \ref{b-shift-bij}, and Theorem \ref{b-thm}$-$is entirely analogous to the proof of Theorem \ref{identities}, and we leave this exercise to the reader.
The sums in each part are thus necessarily equal to each other and to $\BellB{n}{x,y}$ in (1), $\BellD{n+1}{x,y}$ in (2), $\CatB{n}{x,y}$ in (3), and $\CatD{n+1}{x,y}$ in (4).
\end{proof}

\begin{remark} 

 We may rewrite   part  (3) as the following identity, first proved in a different way by Chen, Wang, and Zhao  \cite[Theorem 2.5]{CWZ}:
\[ \sum_{i=0}^{n} \sum_{j=0}^{\lfloor \frac{n-i}{2} \rfloor} \binom{n}{i} \binom{n-i}{j} \binom{n-i-j}{j} x^{j} y^i
=
\sum_{k=0}^{\lfloor n/2\rfloor}  \binom{2k}{k}  \binom{n}{2k} x^k (y+1)^{n-2k}.\]
 Setting $x=y$ here recovers the identity (\ref{intro2}) mentioned in the introduction, and taking $x=y=1$  gives $\binom{2n}{n} = \sum_k \binom{n}{2k} \binom{2k}{k} 2^{n-2k}$, a type B analogue   for Touchard's formula for the Catalan numbers noted by Simion \cite{typeB}.


\end{remark}

Substituting  the expressions for $\Mo{k}{x}$ and $\wMoB{k}{x}$ given in Corollary \ref{mob-cor} and Proposition \ref{tilde-prop} into parts (3) and (4) of the preceding result, while noting the second part of Theorem \ref{identities}, leads to the next corollary. 
    \begin{corollary}\label{hanging} The following identities hold for $n\geq 0$:
\begin{enumerate}
\item[(1)] $\CatB{n+1}{x,y} = (y+1)\CatB{n}{x,y} + 2nx \Cat{n}{x,y}$.
\item[(2)] $\CatD{n+1}{x,y} = \CatB{n}{x,y} + nx\Cat{n}{x,y}$.
\end{enumerate}
  \end{corollary}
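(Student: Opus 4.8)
The plan is to derive both identities by purely algebraic manipulation of the \emph{first-sum} (binomial-transform) expressions for $\CatB{n}{x,y}$ and $\CatD{n+1}{x,y}$ supplied by Theorem \ref{b-identities}(3) and (4), feeding in the recurrences of Corollary \ref{mob-cor} and Proposition \ref{tilde-prop}(2) together with the classical binomial transform $\Cat{n}{x,y} = \sum_{k=0}^{n-1}\binom{n-1}{k}\Mo{k}{x}y^{n-1-k}$ from Theorem \ref{identities}(2). No new combinatorics is required beyond these stated results. The single tool that does the real work is the absorption identity $k\binom{n}{k} = n\binom{n-1}{k-1}$, which converts the ``$\Mo{}{}$-correction'' term appearing in each Motzkin recurrence into a shifted binomial transform that collapses to a copy of $\Cat{n}{x,y}$.

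For part (2), I would begin from $\CatD{n+1}{x,y} = \sum_{k=0}^n \binom{n}{k}\wMoB{k}{x}y^{n-k}$ and substitute $\wMoB{k}{x} = \MoB{k}{x} + kx\Mo{k-1}{x}$ from Proposition \ref{tilde-prop}(2). Splitting the resulting sum, the $\MoB{}{}$-part is exactly $\CatB{n}{x,y}$ by Theorem \ref{b-identities}(3). In the remaining term $x\sum_{k}\binom{n}{k}\,k\,\Mo{k-1}{x}y^{n-k}$ I would apply $k\binom{n}{k}=n\binom{n-1}{k-1}$ and reindex via $j=k-1$ to get $nx\sum_{j=0}^{n-1}\binom{n-1}{j}\Mo{j}{x}y^{n-1-j} = nx\Cat{n}{x,y}$ by Theorem \ref{identities}(2). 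This yields $\CatD{n+1}{x,y}=\CatB{n}{x,y}+nx\Cat{n}{x,y}$.

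For part (1), I would start from $\CatB{n+1}{x,y}=\sum_{k=0}^{n+1}\binom{n+1}{k}\MoB{k}{x}y^{n+1-k}$ and apply Pascal's rule $\binom{n+1}{k}=\binom{n}{k}+\binom{n}{k-1}$. The $\binom{n}{k}$-sum contributes $y\CatB{n}{x,y}$ (the top term $k=n+1$ vanishes since $\binom{n}{n+1}=0$). Reindexing the $\binom{n}{k-1}$-sum by $j=k-1$ gives $\sum_{j=0}^n\binom{n}{j}\MoB{j+1}{x}y^{n-j}$, and here I would invoke Corollary \ref{mob-cor} in the form $\MoB{j+1}{x}=\MoB{j}{x}+2jx\Mo{j-1}{x}$. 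Splitting and applying the same absorption step as in part (2) turns this into $\CatB{n}{x,y}+2nx\Cat{n}{x,y}$, so that $\CatB{n+1}{x,y}=(y+1)\CatB{n}{x,y}+2nx\Cat{n}{x,y}$.

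Since the computation is essentially bookkeeping, the only point requiring care — and the place I expect the lone genuine subtlety — is the handling of the boundary terms where the Motzkin recurrences degenerate, namely the spurious $\Mo{-1}{x}$ contributions at $k=0$ or $j=0$. These are harmless precisely because they are multiplied by a factor $k$ or $2j$ that vanishes, and one should check that these zero coefficients are consistent with the base values $\MoB{0}{x}=\MoB{1}{x}=1$ coming from Proposition \ref{motzkinb-prop} before the reindexing arguments go through cleanly.
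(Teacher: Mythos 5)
Your proposal is correct and follows essentially the same route as the paper, which proves the corollary in one sentence by substituting the recurrences of Corollary \ref{mob-cor} and Proposition \ref{tilde-prop}(2) into the first-sum expressions of Theorem \ref{b-identities}(3)--(4) and invoking Theorem \ref{identities}(2); your write-up simply makes the absorption/reindexing bookkeeping and the harmless $\Mo{-1}{x}$ boundary terms explicit.
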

This  brings to light some redundancy in our identities.
The first part provides a way to derive (\ref{intro1}) from (\ref{intro2}) in the introduction, and the second part shows that the  equality of the two expressions in part (4) of Theorem \ref{b-identities} follows by taking an appropriate linear combination of part (2) of Theorem \ref{identities} and part (3) of  Theorem \ref{b-identities}.

As a second corollary, we have this  analogue of  Corollary \ref{main-cor}. 

\begin{corollary} \label{b-cor}
If  $n$ is a nonnegative integer then the following identities hold:
\begin{enumerate}
\item[(1)] 
$\ds\sum_{k=0}^n (-1)^{n-k} \binom{n}{k}  (y+1)^{n-k} \BellB{k}{x,y} =\Fe{n}{2x}$.

\item[(2)] $\ds\sum_{k=0}^n (-1)^{n-k} \binom{n}{k}  (y+1)^{n-k} \BellD{k+1}{x,y} =\wFeB{n}{x}$.

\item[(3)] 
$\ds\sum_{k=0}^n (-1)^{n-k} \binom{n}{k} (y+1)^{n-k}  \CatB{k}{x,y}= \begin{cases} \binom{n}{n/2} x^{n/2}, &\text{if $n$ is even,} \\
0,&\text{if $n$ is odd.}\end{cases}$

\item[(4)] $\ds\sum_{k=0}^n (-1)^{n-k} \binom{n}{k} (y+1)^{n-k}  \CatD{k+1}{x,y}= \binom{n}{\lfloor n/2\rfloor}x^{\lceil n/2\rceil} $.

\end{enumerate}

  \end{corollary}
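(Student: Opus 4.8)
The plan is to deduce all four identities from Theorems \ref{b-thm} and \ref{b-identities} by exactly the inclusion--exclusion argument already used to pass from Theorem \ref{identities} to Corollary \ref{main-cor}. Throughout I take $|\AA|=x+1$ and $|\BB|=y+1$, and the guiding principle is that the left-hand alternating sum in each part computes the number of elements of $\sPB(n,\AA,\BB)$, $\sPD(n+1,\AA,\BB)$, $\tNCB(n,\AA,\BB)$, or $\tNCD(n+1,\AA,\BB)$ fixed by the relevant $\LinB(n,\BB)$- or $\LinD(n+1,\BB)$-action (that is, lying in an orbit of size one), while the right-hand side counts these fixed points directly.

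First I would set up the inclusion--exclusion. For $S\subseteq[n]$ let $\cX_S$ (resp. $\cY_S$) denote the union of those orbits whose unique $2$-regular representative $\shift(\Lambda)$ has $\Lambda$ carrying the singleton pair $\{i\},\{-i\}$ as blocks for every $i\in S$; in the type D cases one uses \emph{nonzero} singletons, the block $\{0\}$ being irrelevant to orbit size because of the floor in Theorem \ref{b-thm}(2). Collapsing these $|S|$ singleton pairs and invoking Theorem \ref{b-thm} gives, in direct analogy with the type A computation, $|\cX_S|=(y+1)^{|S|}\BellB{n-|S|}{x,y}$ and $|\cY_S|=(y+1)^{|S|}\CatB{n-|S|}{x,y}$ in the type B parts (1) and (3), while $|\cX_S|=(y+1)^{|S|}\BellD{n-|S|+1}{x,y}$ and $|\cY_S|=(y+1)^{|S|}\CatD{n-|S|+1}{x,y}$ in the type D parts (2) and (4). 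The exponent is $|S|$ rather than $2|S|$ precisely because each pair of singletons contributes only a single factor of $|\BB|$ to the orbit size, matching the $s/2$ and $\lfloor s/2\rfloor$ in Theorem \ref{b-thm}. Since the fixed-point set is the complement of $\bigcup_{i\in[n]}\cX_{\{i\}}$ (resp. $\bigcup_{i\in[n]}\cY_{\{i\}}$) and $|\cX_S|$, $|\cY_S|$ depend only on $|S|$, inclusion--exclusion together with the substitution $k=n-|S|$ turns the fixed-point count into exactly the alternating sums on the left of (1)--(4).

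Next I would identify and count the fixed points. In parts (1) and (2) the orbits of size one are the images under $\shift$ of the feasible elements of $\sPD(n,\AA)$ and the B-feasible elements of $\sPB(n,\AA)$, so their number is $\FeD{n}{x}=\Fe{n}{2x}$ and $\wFeB{n}{x}$ respectively, which are the stated right-hand sides. In parts (3) and (4) the fixed points come from the poor elements of $\tNCD(n,\AA)$ with no singleton blocks and the B-poor elements of $\tNCB(n,\AA)$ with no nonzero singleton blocks; here I would apply Corollary \ref{2blocks}. Parts (1) and (2) of that corollary show the first family has $\binom{n}{n/2}$ unlabeled members when $n$ is even and none when $n$ is odd, each carrying $n/2$ pairs of oppositely labeled arcs and hence weight $x^{n/2}$, producing $\binom{n}{n/2}x^{n/2}$ or $0$; part (3) shows the second family has $\binom{n}{\lfloor n/2\rfloor}$ unlabeled members, each with $\lceil n/2\rceil$ arc pairs and weight $x^{\lceil n/2\rceil}$, producing $\binom{n}{\lfloor n/2\rfloor}x^{\lceil n/2\rceil}$.

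The step I expect to be most delicate is the bookkeeping in parts (3) and (4): one must confirm that the no-singleton poor (resp. B-poor) fixed points carry uniformly $\lceil n/2\rceil$ pairs of arcs across the parity split in Corollary \ref{2blocks}(3), so that the two cases there (the partition containing $\{0\}$ as a block versus a partition with $n+1$ arcs) merge into the single closed form $\binom{n}{\lfloor n/2\rfloor}x^{\lceil n/2\rceil}$. A convenient alternative that sidesteps most of this is to note that the second equalities in Theorem \ref{b-identities}(1), (2), (4) exhibit $\BellB{n}{x,y}$, $\BellD{n+1}{x,y}$, and $\CatD{n+1}{x,y}$ as binomial transforms with base $y+1$ of $\Fe{n}{2x}$, $\wFeB{n}{x}$, and $\binom{n}{\lfloor n/2\rfloor}x^{\lceil n/2\rceil}$, so that (1), (2), (4) follow immediately by binomial inversion; for (3), whose second equality involves $(y+1)^{n-2k}$ rather than $(y+1)^{n-k}$, one instead substitutes the formula for $\CatB{k}{x,y}$ and evaluates the inner sum using $\sum_{k}(-1)^{n-k}\binom{n}{k}\binom{k}{2m}=[n=2m]$.
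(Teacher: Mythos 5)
Your proof is correct and follows essentially the same route as the paper: inclusion--exclusion over the sets $\cX_S$, $\cY_S$ of orbits whose $2$-regular representatives carry singleton pairs $\{i\},\{-i\}$ for $i\in S$, combined with Theorems \ref{b-thm} and \ref{b-identities} and the fixed-point counts from Corollary \ref{2blocks}; you simply make explicit the details (the formulas for $|\cX_S|$ and the uniform exponent $\lceil n/2\rceil$ in part (4)) that the paper leaves to the reader. The binomial-inversion shortcut you mention at the end is also valid and arguably cleaner for parts (1), (2), and (4).
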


\begin{remark}
Setting $x=y$ in  part (3) produces the result given by Chen, Wang, and Zhao as \cite[Theorem 3.1]{CWZ}. Setting $x=y=1$ in this equation gives an identity attributed to Dawson \cite[Page 71]{riordan}, for which Andrews \cite[Theorem 5.4]{andrews} gives a proof using basic hypergeometric functions. 
 \end{remark}

\begin{proof}
The proof is almost the same as that of Corollary \ref{main-cor}.  
In short, letting given a subset $S \subset [n]$, let
 $\cX_S$ and $\cY_S$ denote the unions of the $\LinB(n,\BB)$-orbits
 of $\shift(\Lambda)$ for all partitions $\Lambda$ in $\sPD(n,\AA)$ and $\tNCD(n,\AA)$, respectively,
which contain the singletons $\{i\}$ and $\{-i\}$ as blocks for each $i \in S$.
By Theorem \ref{b-thm} the sets of $\LinB(n,\BB)$-invariants in $\sPB(n,\AA,\BB)$ and $\tNCB(n,\AA,\BB)$ are the respective complements of $\bigcup_{i \in [n]}\cX_{\{ i\}}$ and 
$\bigcup_{i \in [n]}\cY_{\{ i\}}$. Using Theorems \ref{b-thm} and \ref{b-identities} and the inclusion-exclusion principle to count the elements in these unions affords parts (1) and (3) upon setting $x = |\AA|-1$ and $y = |\BB|-1$.

One proves parts (2) and (4) in the same way,
by considering
the analogous sets $\cX'_S$ and $\cY'_S$ given by the unions of the $\LinD(n+1,\BB)$-orbits
 of $\shift(\Lambda)$ for all partitions $\Lambda$ in $\sPB(n,\AA)$ and $\tNCB(n,\AA)$, respectively,
which contain the singletons $\{i\}$ and $\{-i\}$ as blocks for each $i \in S$.
\end{proof}

We conclude this section by noting the following  recurrences for $\Cat{n}{x,y}$ and $\CatB{n}{x,y}$, which are easy consequences of the preceding results but which are difficult to deduce otherwise.

\begin{corollary} For $n\geq 2$, the following recurrences hold:
\begin{enumerate}
\item[(1)] $(n+1)\Cat{n}{x,y} =(y+1) (2n-1)  \Cat{n-1}{x,y} + \(4x-(y+1)^2\)(n-2)  \Cat{n-2}{x,y}.$

\item[(2)] $n \CatB{n}{x,y} =(y+1)(2n-1) \CatB{n-1}{x,y} + \(4x-(y+1)^2\)(n-1)\CatB{n-2}{x,y}.$


\end{enumerate}
\end{corollary}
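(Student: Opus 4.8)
The plan is to prove both recurrences algebraically, substituting the closed forms for the bivariate Narayana polynomials derived above and comparing coefficients. Part (2) of Theorem~\ref{identities} gives
\[
\Cat{n}{x,y} = \sum_{k\geq 0}\cC_k\binom{n-1}{2k}x^k(y+1)^{n-1-2k}\qquad(n\geq 1),
\]
while part (3) of Theorem~\ref{b-identities} gives
\[
\CatB{n}{x,y} = \sum_{k\geq 0}\binom{2k}{k}\binom{n}{2k}x^k(y+1)^{n-2k}\qquad(n\geq 0).
\]
Writing $v=y+1$ and treating $x$ and $v$ as independent indeterminates, the monomials $x^kv^j$ appearing here are linearly independent (distinct values of $k$ carry distinct $x$-degrees), so each recurrence is equivalent to the equality, for every $k$, of the coefficient of $x^kv^{\,m}$ on its two sides.

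First I would extract those coefficients. In~(1) the coefficient of $x^kv^{n-1-2k}$ on the left is $(n+1)\cC_k\binom{n-1}{2k}$, and on the right the three summands contribute $(2n-1)\cC_k\binom{n-2}{2k}$ from the $(y+1)$ term, $-(n-2)\cC_k\binom{n-3}{2k}$ from the $-(y+1)^2$ term, and $4(n-2)\cC_{k-1}\binom{n-3}{2k-2}$ from the $4x$ term; the last arises because multiplication by $4x$ raises the $x$-degree by one, so it draws on the index $k-1$. The identical bookkeeping handles~(2), with $\cC_k\binom{n-1}{2k}$ replaced by $\binom{2k}{k}\binom{n}{2k}$ and the shifted term becoming $4(n-1)\binom{2k-2}{k-1}\binom{n-2}{2k-2}$. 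The only delicate point is this degree shift and the convention at $k=0$, where the shifted term is simply absent.

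Next I would divide each resulting identity by the common factor ($\cC_k\binom{n-1}{2k}$ for~(1), $\binom{2k}{k}\binom{n}{2k}$ for~(2)) and rewrite all remaining binomial ratios using
\[
\frac{\binom{m-1}{2k}}{\binom{m}{2k}}=\frac{m-2k}{m},\qquad
\frac{\binom{m-2}{2k}}{\binom{m}{2k}}=\frac{(m-2k)(m-2k-1)}{m(m-1)},\qquad
\frac{\binom{m-2}{2k-2}}{\binom{m}{2k}}=\frac{2k(2k-1)}{m(m-1)},
\]
with $m=n-1$ for~(1) and $m=n$ for~(2), together with the ratios $\cC_{k-1}/\cC_k=\tfrac{k+1}{2(2k-1)}$ and $\binom{2k-2}{k-1}/\binom{2k}{k}=\tfrac{k}{2(2k-1)}$. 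After clearing the denominator, each recurrence collapses to an elementary polynomial identity in $n$ and $k$: recurrence~(1) reduces to
\[
(n+1)(n-1)=(2n-1)(n-1-2k)+4k(k+1)-(n-1-2k)(n-2-2k),
\]
and recurrence~(2) to
\[
n^2=(2n-1)(n-2k)+4k^2-(n-2k)(n-2k-1),
\]
and both are verified by direct expansion (each side of the first equals $n^2-1$, each side of the second equals $n^2$).

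Finally I would dispose of the boundary behaviour directly rather than through the general comparison: in~(1) the prefactor $(n-2)$ annihilates the term $\Cat{0}{x,y}$ (for which the type~A closed form is not literally valid), while in~(2) one checks $\CatB{0}{x,y}=1$ against its closed form, so the case $n=2$ needs no separate treatment. I expect the main obstacle to be purely organizational---correctly tracking the $x$-degree shift coming from the $4x$ term and confirming that the coefficientwise comparison is legitimate at the extreme values of $k$---rather than any real difficulty, since once the binomial and Catalan ratios above are in place the two identities trivialize.
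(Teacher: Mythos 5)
Your proposal is correct and follows exactly the route of the paper's own (very terse) proof: substitute the right-most closed forms from Theorems \ref{identities}(2) and \ref{b-identities}(3), compare coefficients of $x^k(y+1)^m$, and reduce to elementary binomial/Catalan identities, which you actually verify. The coefficient extraction, the ratio computations, and the boundary checks at $k=0$ and $n=2$ are all sound.
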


\begin{remark}
Sulanke provides bijective proofs of part (1) in the increasingly general  cases $x=y \in \{1,2\}$ \cite{SulankeSchroeder}; $x=1$ \cite[Proposition 1.1]{SulankeMotzkin}; and $x=y$ \cite{SulankeNarayana}.
\end{remark}

\begin{proof}
After substituting the right-most expressions in Theorem \ref{identities} and Theorem \ref{b-identities} for $\Cat{n}{x,y}$ and $\CatB{n}{x,y}$ into these equations, 
the theorem is equivalent to easily checked identities involving binomial coefficients and Catalan numbers.
%
\end{proof}


\section{Connections to representation theory}
\label{last-sect}

\def\UTB{\UT^{\mathrm{B}}}
\def\UTD{\UT^{\mathrm{D}}}

Fix a finite field $\FF_q$ with with order $q$ and \emph{odd characteristic} $p > 0$.
Given positive integers $m,n$, we write $\FF_q^{m\times n}$ for the set of $m\times n$ matrices over $\FF_q$. 
For any matrix $X \in \FF_q^{m\times n}$, 
let $X^\dag \in \FF_q^{n\times m}$ denote the backwards transposed matrix defined by $(X^\dag)_{ij} = X_{n+1-j,m+1-i}$.

For each positive integer $n$, let $\UT_n(\FF_q)$ denote the group of $n\times n$ upper triangular matrices over $\FF_q$ whose diagonal entries are all equal to one, and 
 let $\UTB_n(\FF_q)\subset \UT_{2n+1}(\FF_q)$ and $\UTD_n(\FF_q)\subset \UT_{2n}(\FF_q)$ be the subgroups of elements fixed by the involution $g \mapsto (g^{-1})^\dag$.  Explicitly,
this means:
\[
\ba 
\UT_n(\FF_q) &=\Bigl\{ x \in \FF_q^{n\times n} : x_{ii} = 1\text{ and } x_{ji} = 0\text{ for }i,j \in [n] \text{ with }i<j\Bigr\},
\\
 \UTB_n(\FF_q)&= \left\{ 
\(\barr{lll} x & xu & xz \\ 0 & 1 & -u^\dag \\ 0 & 0 & (x^{-1})^\dag \earr\) : x \in \UT_n(\FF_q),\ u \in \FF_q^{n\times 1},\ z \in \FF_q^{n\times n},\ z^\dag + z +uu^\dag=0
\right\},
\\
\UTD_n(\FF_q)&= \left\{ 
\(\barr{ll} x & xz \\  0 &  (x^{-1})^\dag \earr\) : x \in \UT_n(\FF_q),\ z \in \FF_q^{n\times n},\ z^\dag + z = 0
\right\}.
 \ea\]
The groups $\UT_{n+1}(\FF_q)$, $\UTB_n(\FF_q)$, and $\UTD_n(\FF_q)$ are isomorphic to the Sylow $p$-subgroups of the 
Chevalley groups $A_n(q)$, $B_n(q)$, and $D_n(q)$.

Finding a general classification of the irreducible representations of these and related groups for all $n$ and $q$ is a well-known wild problem.  
However, in the past two decades
a series of researchers led by C. A. M. Andr\'e have defined and studied useful \emph{supercharacter theories} for these groups, with many notable combinatorial properties. 
Introduced by Diaconis and Isaacs \cite{DI}, a supercharacter theory of a finite group $G$ is a set $\cS$ of complex characters  of $G$, called \emph{supercharacters}, such that (i) every irreducible character of $G$ is a constituent of exactly one character $\chi \in \cS$; and (ii) $\cS$ has the same cardinality as $\cK$, the coarsest partition of $G$ with the property that each character $\chi \in \cS$ is constant on each set $K \in \cK$.  This definition leads naturally to the idea of a ``supercharacter table'' and other analogues of notions from character theory.

\def\halve{\emph{halve}}

While the groups $\UT_n(\FF_q)$, $\UTB(\FF_q)$, and $\UTD(\FF_q)$ in fact have many different supercharacter theories, each has one in particular whose supercharacters are naturally indexed by $\FF_q$-labeled set partitions of an appropriate type.  We  endeavor here to briefly describe these supercharacter theories, with the goal of lending representation theoretic meanings to some of the objects examined in the previous sections.

For a reference to the following material, see \cite{Thiem}. We may define the supercharacters of $\UT_n(\FF_q)$ by an explicit formula. Fix a nontrivial homomorphism $\theta : \FF_q \to \CC^\times$ from the additive group of the field to the complex numbers.  There is an equivalence relation on $\UT_n(\FF_q)$ defined by setting $x \sim y$ if and only if there are $g,h \in \UT_n(\FF_q)$ with $g(x-1)h = y-1$.  The matrices 
\[ x_\Gamma \omdef = 1 + \sum_{(i,j) \in \Arc(\Lambda)} \Lambda_{ij} e_{ij} \in\UT_n(\FF_q),\qquad\text{for }\Gamma \in \sP(n,\FF_q),\] with $e_{ij}$ denoting an $n\times n$ elementary matrix, are representatives of the equivalence classes of this relation, which we call \emph{superclasses}.
For each $\Lambda \in \sP(n,\FF_q)$, let $\chi_\Lambda : \UT_n(\FF_q) \to \CC$ be the function which is constant on superclasses with the formula
\be \chi_\Lambda(x_\Gamma) = \begin{cases}
\displaystyle \prod_{(i,l) \in \Arc(\Lambda)} q^{c(i,l;\Gamma)}  \theta\(\Lambda_{ij}\Gamma_{ij}\) ,&\barr{l} \text{if }\{(i,j),(j,k) \} \cap \Arc(\Gamma)= \varnothing\text{ for all $i,j,k$} \\ \text{with $i<j<k$ and $(i,k) \in \Arc(\Lambda)$,}\earr \\ 0,&\text{otherwise,}\end{cases}\ee for $\Gamma \in \sP(n,\FF_q)$, 
where $c(i,l;\Gamma) \omdef=l-i-1- |\{ (j,k) \in \Arc(\Gamma) : i<j<k<l\}|$ and where $\Gamma_{ij}$ is defined to be zero if $(i,j) \notin \Arc(\Gamma)$.

In the sequence of papers \cite{bcd1,bcd2,bcd3}, Andr\'e and Neto introduce an analogous set of supercharacters for the groups $\UTB_n(\FF_q)$ and $\UTD_n(\FF_q)$.  One may succinctly define these  as  restrictions of certain $\chi_\Lambda$'s.   
To state this clearly, 
let $\halve(\Lambda)$ for $\Lambda\in \sPB(n,\FF_q)$ be the labeled set partition in $\sP(2n+1,\FF_q)$
given by removing all arcs $(i,j)$ with $i+j > 0$ from $\Lambda$ and then applying the natural embedding of $\FF_q$-labeled partitions of $[\pm n] \cup \{0\}$ in $\sP(2n+1,\FF_q)$.
We define $\halve(\Gamma) \in \sP(2n,\FF_q)$ for $\Gamma \in \sPD(n,\FF_q)$
by the same operation; i.e., remove all arcs $(i,j)$ with $i+j > 0$ from $\Lambda$ and then apply the natural embedding of $\FF_q$-labeled partitions of $[\pm n] $ in $\sP(2n,\FF_q)$.
For example,
\[ \halve\( \xy<0.0cm,-0.0cm> \xymatrix@R=-0.0cm@C=.1cm{
*{\bullet} \ar @/^0.5pc/ @{-} [r]^a &
*{\bullet} \ar @/^.9pc/ @{-} [rr]^b &
*{\bullet} \ar @/^.9pc/ @{-} [rr]^{-b}  &
*{\bullet}   &
*{\bullet} \ar @/^0.5pc/ @{-} [r]^{-a}&
*{\bullet} 
\\
-3 &
-2   & 
-1 &
+ 1 &
+ 2 &
+ 3 
}\endxy\)
=
 \xy<0.0cm,-0.0cm> \xymatrix@R=-0.0cm@C=.1cm{
*{\bullet} \ar @/^0.5pc/ @{-} [r]^a &
*{\bullet} \ar @/^.9pc/ @{-} [rr]^b &
*{\bullet}  &
*{\bullet}   &
*{\bullet} &
*{\bullet} 
\\
1 &
2   & 
3 &
4 &
5 &
6 
}\endxy \]
for $a,b \in \FF_q^\times$.  Now, for $\Lambda \in \sPB(n,\FF_q)$ and $\Gamma \in \sPD(n,\FF_q)$ we let $\chiB_\Lambda : \UTB_n(\FF_q) \to \CC$ and $\chiD_\Gamma : \UTD_n(\FF_q) \to \CC$ denote the restricted functions
\[ \chiB_\Lambda \omdef= \Res_{\UTB_n(\FF_q)}^{\UT_{2n+1}(\FF_q)} \( \chi_{\halve(\Lambda)}
\)
\qquad\text{and}\qquad
 \chiD_\Gamma\omdef = \Res_{\UTD_n(\FF_q)}^{\UT_{2n}(\FF_q)} \( \chi_{\halve(\Gamma)}
\).
\]
The significance of these functions derives from the following theorem.

\newcommand{\SCT}[2]{SCT^{\mathrm{#1}}_{#2}}

\begin{theorem}\label{ps}
When $q$ is odd, the sets  of functions
\[ \{ \chi_\Lambda : \Lambda \in \sP(n,\FF_q)\}, \quad 
\{ \chiB_\Lambda : \Lambda \in \sPB(n,\FF_q)\},\quad 
\text{and}\quad \{ \chiD_\Lambda : \Lambda \in \sPD(n,\FF_q)\}\] are supercharacter theories of the groups $\UT_n(\FF_q)$, $\UTB_n(\FF_q)$, and $\UTD_n(\FF_q)$. These theories have $\Bell{n}{q-1}$, $\BellB{n}{q-1}$,  and $\Bell{n}{2q-2}$ distinct supercharacters, respectively.
\end{theorem}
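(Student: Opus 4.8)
The plan is to split the statement into a structural claim---that each of the three families $\{\chi_\Lambda\}$, $\{\chiB_\Lambda\}$, $\{\chiD_\Lambda\}$ constitutes a supercharacter theory in the sense of Diaconis and Isaacs \cite{DI}---and an enumerative claim counting the supercharacters in each theory. The structural claim is not new: for $\UT_n(\FF_q)$ it is the foundational result recalled in \cite{Thiem}, and for $\UTB_n(\FF_q)$ and $\UTD_n(\FF_q)$ it is the content of Andr\'e and Neto's papers \cite{bcd1,bcd2,bcd3}. Since we have \emph{defined} $\chiB_\Lambda$ and $\chiD_\Gamma$ here as the restrictions $\Res_{\UTB_n(\FF_q)}^{\UT_{2n+1}(\FF_q)}(\chi_{\halve(\Lambda)})$ and $\Res_{\UTD_n(\FF_q)}^{\UT_{2n}(\FF_q)}(\chi_{\halve(\Gamma)})$, the one thing requiring genuine verification is that these restricted functions coincide with the supercharacters Andr\'e and Neto construct intrinsically; once that identification is in hand, the structural claim follows at once from their theorems.

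To carry out the identification I would compare the two descriptions on a common set of superclass representatives. Andr\'e and Neto build their characters from the two-sided action of the group on its underlying nilpotent algebra, and the resulting functions are known explicitly on superclass representatives. On the other side, $\Res(\chi_{\halve(\Lambda)})$ can be evaluated directly from the formula for $\chi_{\halve(\Lambda)}$ stated above, using that the superclasses of $\UTB_n(\FF_q)$ and $\UTD_n(\FF_q)$ arise by intersecting the superclasses of the ambient $\UT_m(\FF_q)$ with the relevant subgroup. The operation \halve, which discards the arcs $(i,j)$ with $i+j>0$, is designed precisely so that $\chi_{\halve(\Lambda)}$ restricts to the correct product over the ``positive half'' of the diagram; checking that this product matches Andr\'e and Neto's formula term by term, while keeping careful track of the symmetry condition \eqref{b-def}, is the substantive technical step. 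I expect this matching to be the principal obstacle, since it requires reconciling two genuinely different-looking definitions and verifying that restriction does not collapse or split any of the relevant classes.

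For the enumerative claim the work is routine. In each type the indexing map $\Lambda \mapsto \chi^{\X}_\Lambda$ is a bijection onto the set of supercharacters---distinctness of the supercharacters being part of the cited structure theorems---so the number of supercharacters equals the cardinality of the index set, and these cardinalities are immediate from the polynomial definitions. Each underlying partition $\Lambda \in \sP(n)$ admits $(q-1)^{|\Arc(\Lambda)|}$ labelings by nonzero field elements, whence
\[ |\sP(n,\FF_q)| = \sum_{\Lambda \in \sP(n)} (q-1)^{|\Arc(\Lambda)|} = \Bell{n}{q-1}. \]
In types B and D the symmetry \eqref{b-def} forces the labels on the paired arcs $(i,j)$ and $(-j,-i)$ to be negatives of one another, so a partition with $|\Arc(\Lambda)|$ arcs has exactly $(q-1)^{|\Arc(\Lambda)|/2}$ admissible labelings; comparing with the definitions of $\BellB{n}{x}=\BellB{n}{x,x}$ and $\BellD{n}{x}=\BellD{n}{x,x}$ then gives
\[ |\sPB(n,\FF_q)| = \BellB{n}{q-1} \qquad\text{and}\qquad |\sPD(n,\FF_q)| = \BellD{n}{q-1}. \]
Finally, the identity \eqref{belld-eq}, namely $\BellD{n}{x} = \Bell{n}{2x}$, rewrites the last count as $\Bell{n}{2q-2}$, which completes the enumeration and hence the proof.
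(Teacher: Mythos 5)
Your proposal follows essentially the same route as the paper: the structural claim is reduced to Andr\'e--Neto's theorems by identifying the restricted characters $\Res_{\UTB_n(\FF_q)}^{\UT_{2n+1}(\FF_q)}(\chi_{\halve(\Lambda)})$ and $\Res_{\UTD_n(\FF_q)}^{\UT_{2n}(\FF_q)}(\chi_{\halve(\Gamma)})$ with their intrinsically defined supercharacters, and the counts come from the cardinalities of the index sets exactly as you compute them (arcs paired by the symmetry \eqref{b-def} contributing $(q-1)^{|\Arc(\Lambda)|/2}$ labelings, then \eqref{belld-eq} to rewrite $\BellD{n}{q-1}$ as $\Bell{n}{2q-2}$). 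The one step you flag as the principal technical obstacle---matching the restriction definition against Andr\'e--Neto's construction on superclass representatives---is not carried out by hand in the paper; it is precisely the content of \cite[Proposition 2.2]{bcd2}, which the paper simply cites.
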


 In particular, this means the functions $\chi_\Lambda$, $\chiB_\Lambda$, and $\chiD_\Lambda$ are characters. When $q$ is even, $\{ \chi_\Lambda\}$ is still a supercharacter theory of $\UT_n(\FF_q)$. The methods used to show that the second two sets are supercharacter theories, however, depend on odd characteristic.

\begin{proof}
That $\{ \chi_\Lambda : \Lambda \in \sP(n,\FF_q)\}$ is a supercharacter theory of $\UT_n(\FF_q)$ is well-known; see \cite{Thiem}. 
Our definition of the supercharacters
$\{ \chiB_\Lambda : \Lambda \in \sPB(n,\FF_q)\}$ and $\{ \chiD_\Lambda : \Lambda \in \sPD(n,\FF_q)\}$ 
differs from the one given by Andr\'e and Neto in \cite{bcd1,bcd2,bcd3}, but is equivalent by \cite[Proposition 2.2]{bcd2}. Hence it follows from the main conclusions of 
 \cite{bcd1,bcd2,bcd3} that both sets are supercharacter theories.
\end{proof}

The formula for $\chi_\Lambda$ shows that  $\Lin(n,\FF_q)$, $\LinB(n,\FF_q)$, and $\LinD(n,\FF_q)$  index the linear supercharacters (i.e., those with degree 1) of $\UT_n(\FF_q)$, $\UTB(n,\FF_q)$, and $\UTD(n,\FF_q)$, respectively.  These supercharacters exhaust the set of all linear characters for $\UT_n(\FF_q)$ and $\UTB_n(\FF_q)$, but not for $\UTD_n(\FF_q)$.  In addition, it is not difficult to check that the product of a supercharacter with a linear supercharacter remains a supercharacter, and that, as mentioned in the introduction, we have 
\[ \chi_\alpha \chi_\Lambda = \chi_{\alpha+\Lambda},
\qquad
\chiB_{\alpha'}\chiB_{\Lambda'}= \chiB_{\alpha'+\Lambda'},
\qquad\text{and}\qquad
\chiD_{\alpha''}\chiD_{\Lambda''} = \chiD_{\alpha''+\Lambda''}\]
for $\alpha \in \Lin(n,\FF_q)$, $\alpha'\in\LinB(n,\FF_q)$, $\alpha''\in \LinD(n,\FF_q)$ and $\Lambda \in \sP(n,\FF_q)$, $\Lambda'\in\sPB(n,\FF_q)$, $\Lambda''\in\sPD(n,\FF_q)$. As a consequence, we may state the following.

\begin{corollary} The supercharacter theories of $\UT_n(\FF_q)$, $\UTB_n(\FF_q)$, and $\UTD_n(\FF_q)$ in Theorem \ref{ps}
 have  
$\Fe{n-1}{q-1}$, $\Fe{n}{2q-2}$, and $\wFeB{n-1}{q-1}$  supercharacters, respectively, which are invariant under multiplication by all linear supercharacters.
\end{corollary}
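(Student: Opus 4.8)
The plan is to reduce the statement to a fixed-point count for the three $+$-actions and then read off those counts directly from the orbit-size formulas of Theorems \ref{main-thm} and \ref{b-thm}, specialized to $\AA = \BB = \FF_q$.

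First I would establish the key reduction. By Theorem \ref{ps} the assignments $\Lambda \mapsto \chi_\Lambda$, $\Lambda \mapsto \chiB_\Lambda$, and $\Lambda \mapsto \chiD_\Lambda$ are surjections onto the respective supercharacter sets, and they are in fact bijections because the source cardinalities $|\sP(n,\FF_q)| = \Bell{n}{q-1}$, $|\sPB(n,\FF_q)| = \BellB{n}{q-1}$, and $|\sPD(n,\FF_q)| = \BellD{n}{q-1} = \Bell{n}{2q-2}$ match the stated numbers of distinct supercharacters. Combining this injectivity with the multiplicativity relations $\chi_\alpha \chi_\Lambda = \chi_{\alpha+\Lambda}$ (and its type B and D analogues) recorded just before the corollary, the supercharacter $\chi_\Lambda$ is fixed under multiplication by every linear supercharacter exactly when $\alpha + \Lambda = \Lambda$ for all $\alpha \in \Lin(n,\FF_q)$, i.e.\ exactly when $\Lambda$ is a fixed point of the $\Lin(n,\FF_q)$-action; the same holds in types B and D with $\LinB(n,\FF_q)$ and $\LinD(n,\FF_q)$. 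Since the linear supercharacters are precisely those indexed by these $\Lin$-groups, the three quantities to be computed are the numbers of $\Lin$-, $\LinB$-, and $\LinD$-fixed points in $\sP(n,\FF_q)$, $\sPB(n,\FF_q)$, and $\sPD(n,\FF_q)$.

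Next I would count these fixed points, using that a fixed point is exactly an orbit of size one. For type A, Theorem \ref{main-thm} identifies each orbit with a unique $\shift(\Gamma)$ for $\Gamma \in \sP(n-1,\FF_q)$ of orbit size $q^{s}$, where $s$ is the number of singleton blocks of $\Gamma$; since $q>1$ this is a singleton orbit precisely when $\Gamma$ is feasible, and feasible $\FF_q$-labeled partitions of $[n-1]$ are counted by $\Fe{n-1}{q-1}$. For type B, Theorem \ref{b-thm}(1) gives orbit size $q^{s/2}$ for $\shift(\Lambda)$ with $\Lambda \in \sPD(n,\FF_q)$, so the fixed points correspond to feasible $\Lambda$, counted by $\FeD{n}{q-1}$, which the identity $\FeD{n}{x} = \Fe{n}{2x}$ rewrites as $\Fe{n}{2q-2}$. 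For type D, Theorem \ref{b-thm}(2) gives orbit size $q^{\lfloor s/2\rfloor}$ for $\shift(\Lambda)$ with $\Lambda \in \sPB(n-1,\FF_q)$, where by the remark following that theorem $\lfloor s/2\rfloor$ is half the number of nonzero singleton blocks of $\Lambda$; hence the fixed points are exactly the $\Lambda$ with no nonzero singleton block, which is the B-feasible condition, and these are counted by $\wFeB{n-1}{q-1}$.

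The hard part will be the type D case, and the difficulty is conceptual rather than computational: one must see that the central block containing $0$ makes the relevant orbit exponent $\lfloor s/2\rfloor$ rather than $s/2$, so that a singleton block $\{0\}$ does \emph{not} contribute a nontrivial factor to the orbit. This is exactly what forces the counting condition to be ``no nonzero singleton block'' (B-feasible) rather than ``no singleton block'' (feasible), and it is the reason the answer is $\wFeB{n-1}{q-1}$ instead of an $\FeB$-type quantity. A secondary point to verify carefully is that the specialization $\AA = \BB = \FF_q$ genuinely recovers the action of $\Lin(n,\FF_q)$ on $\sP(n,\FF_q)$ (and its type B and D analogues) that underlies the supercharacter multiplication, so that the orbit-size formulas of Theorems \ref{main-thm} and \ref{b-thm} apply verbatim.
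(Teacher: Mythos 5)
Your proof is correct and follows exactly the route the paper intends (the paper states this corollary without an explicit proof, as an immediate consequence of the multiplicativity relations $\chi_\alpha\chi_\Lambda = \chi_{\alpha+\Lambda}$ and the orbit analysis): injectivity of the indexing maps reduces invariance to being a fixed point of the $+$ action, and Theorems \ref{main-thm} and \ref{b-thm} identify the size-one orbits with the feasible, feasible, and B-feasible partitions counted by $\Fe{n-1}{q-1}$, $\Fe{n}{2q-2}$, and $\wFeB{n-1}{q-1}$ respectively. Your handling of the type D case — that $\lfloor s/2\rfloor = 0$ permits the singleton block $\{0\}$, forcing the B-feasible rather than feasible condition — is precisely the point that makes the third count $\wFeB{n-1}{q-1}$, and it is handled correctly.
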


We conclude with a result which gives a representation theoretic interpretation of the sets $\tNCB(n,\AA)$ and $\tNCD(n,\AA)$ introduced in Section \ref{b-section}.


\begin{theorem}\label{last-thm}
With respect to the supercharacter theories in Theorem \ref{ps}, the following properties hold:

\begin{enumerate}
\item[(1)] The map $\Lambda \mapsto \chi_\Lambda$ defines a bijection from $\NC(n,\FF_q)$ to the set of irreducible supercharacters of $\UT_n(\FF_q)$.
More generally, the groups $\UT_n(\FF_q)$, $\UTB_{n-1}(\FF_q)$,  $\UTD_n(\FF_q)$ all have exactly $\Cat{n}{q-1}$ distinct irreducible supercharacters.

\item[(2)] The map $\Lambda \mapsto \chiB_\Lambda$ defines a bijection from $\tNCB(n,\FF_q)$ to the set of supercharacters of $\UTB_n(\FF_q)$ not equal to the restriction of any reducible supercharacter of $\UT_{2n+1}(\FF_q)$.

\item[(3)] The map $\Lambda \mapsto \chiD_\Lambda$ defines a bijection from $\tNCD(n,\FF_q)$ to the set of supercharacters of $\UTD_n(\FF_q)$  not equal to the restriction of any reducible supercharacter of $\UT_{2n}(\FF_q)$.

\end{enumerate}
\end{theorem}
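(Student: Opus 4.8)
The plan is to derive all three parts from the single identity $\chi^{\X}_\Lambda=\Res(\chi_{\halve(\Lambda)})$ (valid for $\X\in\{\mathrm B,\mathrm D\}$, and tautologically in type $A$) together with the classical criterion, recalled from \cite{Thiem}, that a supercharacter $\chi_\Gamma$ of $\UT_m(\FF_q)$ is irreducible precisely when $\Gamma$ is noncrossing. Granting this criterion, part (1) in type $A$ is immediate: $\Lambda\mapsto\chi_\Lambda$ carries $\NC(n,\FF_q)$ bijectively onto the irreducible supercharacters of $\UT_n(\FF_q)$, and $|\NC(n,\FF_q)|=\Cat{n}{q-1}$ directly from the definition of $\Cat{n}{x}$. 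Everything else rests on the following comparison, which I would prove first.

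\textbf{Combinatorial Lemma.} \emph{For $\X\in\{\mathrm B,\mathrm D\}$ and $\Lambda\in\sPX(n,\FF_q)$, the partition $\halve(\Lambda)$ is noncrossing if and only if $\Lambda\in\tNCX(n,\FF_q)$.} The arcs of $\Lambda$ occur in mirror pairs $\{(i,j),(-j,-i)\}$ whose coordinate sums are negatives of one another, and $\halve$ deletes exactly the nonnegative-sum arc of each pair; since deleting arcs and relabeling order-preservingly preserve the crossing relation among the surviving arcs, a crossing of $\Lambda$ survives in $\halve(\Lambda)$ unless its two arcs form such a mirror pair. Because $\tNCX(n,\FF_q)$ is defined by the requirement that every crossing $(i,k),(j,l)$ satisfy $(i,k)=(-l,-j)$, i.e.\ be a mirror crossing, the two conditions agree; the one point needing a short case check is that a genuine non-mirror crossing always leaves a surviving crossing after halving.

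With the lemma available, the easy halves of (2) and (3) are immediate: if $\Lambda\notin\tNCX$ then $\halve(\Lambda)$ is crossing, so $\chi_{\halve(\Lambda)}$ is a reducible supercharacter of $\UT_{2n+1}(\FF_q)$ (resp.\ $\UT_{2n}(\FF_q)$) and $\chi^{\X}_\Lambda=\Res(\chi_{\halve(\Lambda)})$ is literally the restriction of a reducible supercharacter. For the reverse containment I would record the elementary norm inequality that for any $H\le G$ and any reducible character $\chi=\sum_i m_i\psi_i$ of $G$ one has $\langle\Res_H\chi,\Res_H\chi\rangle=\sum_{i,j}m_im_j\langle\Res_H\psi_i,\Res_H\psi_j\rangle\ge\sum_i m_i^2\ge 2$, using that restrictions of irreducibles are nonzero characters and that inner products of characters are nonnegative. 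This already shows that every restriction of a reducible supercharacter is itself reducible, so the irreducible supercharacters lie in the claimed sets; but since $\tNCX$ strictly contains the noncrossing locus (self-mirror crossings yield reducible $\chi^{\X}_\Lambda$), the norm bound does not by itself finish (2) and (3).

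The decisive step, and the one I expect to be the main obstacle, is a \emph{restriction-uniqueness} statement: if $\Res_{\UTB_n}(\chi_\Gamma)=\chiB_\Lambda$ for some $\Lambda\in\tNCB$, then $\Gamma$ must itself be noncrossing (and similarly in type $D$). I would prove this by comparing superclass values: the restriction of $\chi_\Gamma$ to $\UTB_n(\FF_q)$ is determined by its values on the representatives $x_\Xi$ with $\Xi\in\sPB(n,\FF_q)$, and because these $\Xi$ are $\dag$-symmetric only the negative-sum arcs of $\Gamma$ can contribute; feeding this into the explicit formula for $\chi_\Gamma$ and invoking \cite[Prop.\ 2.2]{bcd2} should force $\Gamma$ to agree with $\halve(\Lambda)$ up to reducibility type, which is irreducible. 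This yields the converse of (2) and (3), while injectivity of the two maps is inherited from the bijectivity of $\Lambda\mapsto\chi^{\X}_\Lambda$ on $\sPX(n,\FF_q)$ furnished by Theorem \ref{ps}. The same analysis completes part (1): the norm inequality together with the type-$A$ criterion forces an irreducible $\chi^{\X}_\Lambda$ to have $\halve(\Lambda)$ noncrossing, and restriction-uniqueness (a Clifford-theoretic stabilizer computation underlying \cite{bcd1,bcd2,bcd3}) identifies the irreducible supercharacters of $\UTB_{n-1}(\FF_q)$ and $\UTD_n(\FF_q)$ with the noncrossing elements of $\sPB(n-1,\FF_q)$ and $\sPD(n,\FF_q)$; enumerating these noncrossing loci by the correspondences of Section \ref{b-section} gives $\Cat{n}{q-1}$ in both cases, completing the theorem.
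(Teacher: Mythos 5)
Your proposal hinges on the ``Combinatorial Lemma'' that $\halve(\Lambda)$ is noncrossing if and only if $\Lambda\in\tNCX(n,\FF_q)$, and this lemma is false. Take $\Lambda\in\sPD(5,\FF_2)$ with blocks $\{-5,2\},\{-2,5\},\{-4,1\},\{-1,4\},\{-3\},\{3\}$, so $\Arc(\Lambda)=\{(-5,2),(-2,5),(-4,1),(-1,4)\}$. The arcs $(-5,2)$ and $(-1,4)$ satisfy $-5<-1<2<4$ and are not mirror images of one another, so $\Lambda\notin\tNCD(5,\FF_2)$. Yet $\halve$ deletes the positive-sum arcs $(-2,5)$ and $(-1,4)$ and keeps $(-5,2)$ and $(-4,1)$, which become the \emph{nested} arcs $(1,7)$ and $(2,6)$ in $[10]$; hence $\halve(\Lambda)$ is noncrossing. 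The flaw is exactly at the ``short case check'' you defer: when a non-mirror crossing consists of one negative-sum and one positive-sum arc, halving replaces the latter by its mirror, and the surviving pair need not cross. This breaks even the ``easy half'' of (2) and (3) as you set it up, and it also falsifies your restriction-uniqueness claim: the fiber of the restriction map over $\chiD_\Lambda$ is not a single partition but an entire equivalence class of partitions of $[2n]$ obtained by flipping arcs $(i,j)\mapsto(2n+1-j,2n+1-i)$ with negated labels (this is how the paper argues). In the example above, flipping $(2,6)$ to $(5,9)$ produces a crossing partition of $[10]$ whose restriction still equals $\chiD_\Lambda$, so $\chiD_\Lambda$ \emph{is} the restriction of a reducible supercharacter even though $\halve(\Lambda)$ is noncrossing. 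The correct criterion, which the paper proves, is that $\Lambda\in\tNCX$ if and only if \emph{every} member of the equivalence class of $\halve(\Lambda)$ is noncrossing.

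Part (1) has a parallel gap. Your norm inequality correctly shows that restriction of a reducible character is reducible, and hence that $\chiB_\Lambda$ irreducible forces $\halve(\Lambda)$ noncrossing; but that is strictly weaker than $\Lambda$ noncrossing (a partition whose only arcs are $(-i,j),(-j,i)$ with $i,j>0$ has $\halve(\Lambda)$ equal to a single arc, which is trivially noncrossing, yet $\chiB_\Lambda$ is reducible). The paper closes this gap with an explicit representation-theoretic computation: $\chiB_{-i,j,t}$ is induced from a linear character of a subgroup $H$ that extends to a strictly larger subgroup $K\supsetneq H$, so Frobenius reciprocity and Mackey theory produce a proper constituent; this shows any arc $(-i,j)$ with $i,j>0$ forces reducibility. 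The converse (noncrossing $\Lambda$ gives irreducible $\chiB_\Lambda$) also does not follow from restriction alone, since restrictions of irreducibles can be reducible; the paper instead exhibits $\chiB_\Lambda$ as the inflation of an irreducible supercharacter $\chi_{\Lambda'}$ of the quotient $\UT_{n+1}(\FF_q)\cong\UTB_n(\FF_q)/N$. Neither ingredient appears in your proposal, and appealing generically to a ``Clifford-theoretic stabilizer computation underlying \cite{bcd1,bcd2,bcd3}'' does not supply them.
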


\begin{proof} 
The first sentence in part (1) is noted in several places; e.g.,  \cite{Thiem}. To show that $\Cat{n}{q-1}$ also counts the irreducible supercharacters of $\UTB_{n+1}(\FF_q)$ and $\UTD_n(\FF_q)$, it suffices to show that $\chiB_\Lambda$ is irreducible if and only if $\Lambda \in \sPB(n,\FF_q)$ is noncrossing and 
$\chiD_\Lambda$ is irreducible if and only if $\Lambda \in \sPD(n,\FF_q)$ is noncrossing.
We  only address the type B case as the proof in type D is similar.

Given integers $-n \leq i<j \leq n$ with $i\neq -j$ and $t \in \FF_q^\times$, let $\chiB_{i,j,t}$ denote the supercharacter of $\UTB(n,\FF_q)$ indexed by the labeled set partition in $\sPB(n,\FF_q)$ with just two arcs $(i,j),(-j,-i)$ labeled by $t,-t$.
From the original construction in \cite[Section 2]{bcd3}, it follows that $\chiB_\Lambda = \prod_{i,j,t} \chiB_{i,j,t}$, where $(i,j)$ ranges over all  $(i,j)\in \Arc(\Lambda)$ with $i+j \leq 0$ and $t \in \FF_q^\times$ is the associated label.  Moreover, if $0\leq j<i\leq n$ and 
\[ H = \left \{ g \in \UTB(n,\FF_q) : \ba & g_{n+1-i,k}= 0\text{ if } n+1-i < k \leq n,
\\
& g_{n+1-j,l} = 0\text{ if } n+1-j < l \leq n+1
\ea
\right\}\]
then, as defined in \cite{bcd3}, $\chiB_{-i,j,t}$ is the character induced from the linear character $\tau$ of $H$ defined by $\tau : g \mapsto \theta(tg_{n+1-i,n+1+j})$.
In this situation, one checks that the formula for $\tau$ also defines a linear character of the group 
\[ K = \left \{ g \in \UTB(n,\FF_q) : \ba & g_{n+1-i,k}= 0\text{ if } n+1-i < k \leq n \text{ and } k\neq n+1-j,
\\
& g_{n+1-j,l} = 0\text{ if } n+1-j < l \leq n+1
\ea
\right\} \supsetneq H \]
whence it follows by Frobenius reciprocity and Mackey's theorem that $\chiB_{-i,j,t}$ has a proper constituent (given by inducing $\tau$ from $K$).  This discussion shows that if
$\Lambda \in \sPB(n,\FF_q)$ has an arc of the form $(-i,j)$ where $i,j > 0$,
then $\chiB_\Lambda$ is reducible.

If $\halve(\Lambda)$ has any crossing arcs then $\chiB_\Lambda$ is reducible since it is the restriction of a reducible character. It is straightforward to see that if $\Lambda \in \sPB(n,\FF_q)$ has no arcs $(-i,j)$ with $i,j>0$, then $\halve(\Lambda)$ is noncrossing if and only if $\Lambda$ is noncrossing.
In view of the previous paragraph, it follows that $\chiB_\Lambda$ is irreducible only if $\Lambda \in \sPB(n,\FF_q)$ is noncrossing. If $\Lambda$ is noncrossing then
 $\{n+i+1\}$ is a singleton block of $\halve(\Lambda)$ for each $i \in [n]$. In this case, define $\Lambda' \in \sP(n+1,\FF_q)$ as the labeled set partition formed from $\halve(\Lambda)$ by removing the blocks $\{n+i+1\}$, and let $N$ be the normal subgroup of matrices  $g\in \UTB_n(\FF_q)$ such that $g-1$ has all zeros in the first $n+1$ columns and in the last $n+1$ rows. The formula for $\chi_{\halve(\Lambda)}$ then shows that   $\chiB_\Lambda$ is  
obtained by inflating the supercharacter $\chi_{\Lambda'}$ of the quotient $ \UT_{n+1}(\FF_q) \cong \UTB(n,\FF_q) / N$. Since $\Lambda$ is noncrossing, $\Lambda'$ is noncrossing, so both $\chi_{\Lambda'}$ and $\chiB_\Lambda$ are irreducible.  It follows that $\chiB_\Lambda$ is irreducible if and only if $\Lambda \in \sPB(n,\FF_q)$ is noncrossing, as required. 

To prove (b), say that two partitions $\Lambda,\Lambda' \in \sP(2n+1,\FF_q)$ are equivalent if $\Lambda'$ can be obtained from $\Lambda$ by replacing an arc $(i,j) \in \Arc(\Lambda)$ labeled by $t$ with the arc $(2n+2-j,2n+2-i)$ labeled by $-t$; here, the arc $(i,j)$ can be replaced only if $(2n+2-j,2n+2-i)$ does not already belong to $\Arc(\Lambda)$.
If we extend this to define an equivalence relation on $\sP(2n+1,\FF_q)$, then it follows from the combination of \cite[Proposition 2.2]{bcd2} and  \cite[Corollary 4.7]{Thiem} that  a supercharacter $\chiB_\Gamma$ for $\Gamma \in \sPB(n,\FF_q)$  is equal to the restriction of $\chi_\Lambda$ if and only if $\Lambda \in \sP(2n+1,\FF_q)$ belongs to the equivalence class of $\halve(\Gamma)$.  In turn, it is easy to check that the equivalence class of $\halve(\Gamma)$ contains only noncrossing elements if and only if every pair of crossing arcs in $\Arc(\Gamma)$ has the form $(i,j),(-j,-i)$; i.e., if and only if $\Gamma \in \tNCB(n,\FF_q)$.  The proof of (c) is similar.
\end{proof}

\begin{corollary} The polynomials $\CatB{n}{q-1}$ 
and $\CatD{n}{q-1}$ count the  supercharacters of $\UTB_n(\FF_q)$ and $\UTD_n(\FF_q)$  which are not equal to the restriction of any reducible supercharacter of $\UT_{2n+1}(\FF_q)$ and $\UT_{2n}(\FF_q)$, respectively.
\end{corollary}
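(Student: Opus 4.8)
The plan is to deduce this directly from Theorem~\ref{last-thm}, once $\CatB{n}{q-1}$ and $\CatD{n}{q-1}$ are identified as the cardinalities of the labeled sets $\tNCB(n,\FF_q)$ and $\tNCD(n,\FF_q)$. All of the representation theoretic work has already been carried out in that theorem, so the corollary is essentially a bookkeeping statement, and the only step needing justification is an elementary count.

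First I would record the cardinality interpretation of the polynomials $\CatX{n}{x}$ for $\X \in \{\B,\D\}$. Recall that $\CatX{n}{x} = \CatX{n}{x,x} = \sum_{\Lambda \in \tNCX(n)} x^{|\Arc(\Lambda)|/2}$, the factor $(y/x)^{|\Cov(\Lambda)|/2}$ collapsing to $1$ when $y=x$. Fix an abelian group $\AA$ with $|\AA| = x+1$ and consider an unlabeled partition $\Lambda \in \tNCX(n)$. By the defining condition (\ref{b-def}), $\Lambda$ has no arc of the form $(-i,i)$, so its arcs fall into $|\Arc(\Lambda)|/2$ disjoint pairs $\{(i,j),(-j,-i)\}$; an $\AA$-labeling making $\Lambda$ an element of $\tNCX(n,\AA)$ amounts to choosing a nonzero label $\Lambda_{ij} \in \AA\setminus\{0\}$ for one arc in each pair, the companion label being forced to equal $-\Lambda_{ij}$. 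Hence there are exactly $x^{|\Arc(\Lambda)|/2}$ such labelings, and summing over $\Lambda \in \tNCX(n)$ yields $|\tNCX(n,\AA)| = \CatX{n}{x}$. Taking $\AA = \FF_q$, so that $x = q-1$, gives $|\tNCX(n,\FF_q)| = \CatX{n}{q-1}$.

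It then remains only to invoke Theorem~\ref{last-thm}. Part (2) asserts that $\Lambda \mapsto \chiB_\Lambda$ is a bijection from $\tNCB(n,\FF_q)$ onto the set of supercharacters of $\UTB_n(\FF_q)$ not equal to the restriction of any reducible supercharacter of $\UT_{2n+1}(\FF_q)$; hence the number of the latter equals $|\tNCB(n,\FF_q)| = \CatB{n}{q-1}$. The type D statement follows verbatim from part (3), with $\UT_{2n}(\FF_q)$ in place of $\UT_{2n+1}(\FF_q)$ and $\CatD{n}{q-1} = |\tNCD(n,\FF_q)|$.

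There is no genuine obstacle here, since the substantive claim is Theorem~\ref{last-thm} itself. The only point requiring a line of justification is the count in the first step, namely that the arc pairing forced by (\ref{b-def}) contributes precisely one free nonzero label per pair, so that the unlabeled generating polynomial $\CatX{n}{x}$ evaluated at $x=q-1$ counts the $\FF_q$-labeled set $\tNCX(n,\FF_q)$.
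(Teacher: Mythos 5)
Your proposal is correct and matches the paper's (implicit) argument exactly: the corollary is stated without proof because it follows immediately from Theorem \ref{last-thm}(2)--(3) once one knows $|\tNCX(n,\FF_q)| = \CatX{n}{q-1}$, which is the cardinality interpretation built into the definition of $\CatX{n}{x,y}$. Your arc-pairing count correctly justifies that interpretation, so nothing is missing.
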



\begin{thebibliography}{99}



\bibitem{bcd1} C. A. M. Andr\'e and A. M. Neto, Supercharacters of finite unipotent groups of types $B_n$, $C_n$ and $D_n$, \emph{J. Algebra} \textbf{305} (2006), 394--429.
 
\bibitem{bcd2}  C. A. M. Andr\'e and A. M. Neto, Supercharacters of the Sylow $p$-subgroups of the finite symplectic and orthogonal groups, \emph{Pacific J. Math.} \textbf{239} (2009), 201--230. 

\bibitem{bcd3}  C. A. M. Andr\'e and A. M. Neto, A supercharacter theory for the Sylow $p$-subgroups of the finite symplectic and orthogonal groups, \emph{J. Algebra} \textbf{322}, (2009) 1273--1294.

\bibitem{andrews} G. E. Andrews, Applications of basic hypergeometric functions, \emph{SIAM Rev.}, \textbf{16} 
(1974) 441--484. 

\bibitem{memoir} D. Armstrong, 
Generalized noncrossing partitions and combinatorics of Coxeter groups,
\emph{Memoirs of the AMS}
\textbf{202}   (2009) no. 949.

\bibitem{Athanasiadis} C. A. Athanasiadis, On noncrossing and nonnesting partitions for classical reflection groups, \emph{Electron. J. Combin.} \textbf{5} (1998), Research Paper 42. 

\bibitem{typeD} C. A. Athanasiadis and V. Reiner, Noncrossing partitions for the group $D_n$, \emph{SIAM J. Discrete Math.} \textbf{18} (2004), 397--417.



\bibitem{Dowling} M. Benoumhani, On Whitney numbers of Dowling lattices, \emph{Discrete Math.} \textbf{159} (1996) 13--33.

\bibitem{Dowling2} M. Benoumhani, On Some Numbers Related to Whitney Numbers of 
Dowling Lattices, \emph{Adv. Appl. Math.} \textbf{19} (1997) 106--116.


\bibitem{BerSlo} M. Bernstein and N. J. A. Sloane, Some canonical sequences of integers, \emph{Linear Algebra Appl.} \textbf{226/228} (1995), 57--72.


\bibitem{Bloss} M. Bloss, $G$-colored partition algebras as centralizer algebras of wreath products, \emph{J. Algebra} \textbf{265} (2003) 690--710. 


\bibitem{BSS} J. Bonin, L. Shapiro, and R. Simion, Some $q$-analogues of the Schr\"oder numbers arising from combinatorial statisics on lattice paths, \emph{J. Statist. Plann. Inference.} \textbf{34} (1993) 35--55. 



\bibitem{reduction1}
W. Y. C. Chen, E. Y. P. Deng, and R. R. X. Du, Reduction of m-regular noncrossing 
partitions, \emph{European J. Combin.} \textbf{26} (2005), no. 2, 237--243. 

\bibitem{CDE} W. Y. C. Chen, E. Deutsch, and S. Elizalde, Old and young leaves on plane trees 
and 2-Motzkin paths, \emph{European J. Combin.} \textbf{27} (2006) 414--427. 

\bibitem{CWZ} W. Y. C. Chen, A. Y. Z. Wang, and A. F. Y. Zhao, Identities Derived from Noncrossing Partitions of Type B, preprint, {\tt{arXiv:0908.2291}} (2009).

\bibitem{CYY} W. Y. C. Chen, S. H. F. Yan, and L. L. M. Yang, Identities from weighted 2-Motzkin paths, \emph{Adv. Appl. Math.} \textbf{41} (2008), 329--334.






\bibitem{Coker_Enum} C. Coker, Enumerating a class of lattice paths, \emph{Discrete Math.} \textbf{271} (2003), 13--28.

\bibitem{Coker_Fam} C. Coker, A family of eigensequences, \emph{Discrete Math.} \textbf{282} (2004), 249--250.


\bibitem{Comtet} L. Comtet, \emph{Advanced Combinatorics}, D. Reidel, Dordrecht, 1974.

\bibitem{DI} P. Diaconis and I. M. Isaacs, Supercharacters and superclasses for algebra groups, \emph{Trans. Amer. Math. Soc.} \textbf{360} (2008), 2359--2392. 


\bibitem{Erdelyi} A. Erd\'elyi (Ed.), \emph{Higher Transcendental Functions}, Bateman Project, Vols. 1-3, McGraw-Hill, New York 1953-1955. 

\bibitem{Goursat} E. Goursat, Sur l'\'equation diff\'erentielle lin\'eaire qui admet pour int\'egrale la s\'erie hyperg\'eom\'etrique, \emph{Ann. Sci. de l'\'Ecole Normale Sup\'erieure}, \textbf{X} suppl.,  (1881) 1--142. 


\bibitem{reduction2} A. Kasraoui, $d$-Regular set partitions and rook placements, \emph{S\'eminaire Lotharingien de Combinatoire} \textbf{62} (2009) Article B62a, 8pp.




\bibitem{KerberFollow} M. Klemm and B. Wagner, A matrix of combinatorial numbers related to the symmetric groups, \emph{Discrete Math.} \textbf{28} (1979), 173--177.





\bibitem{tensor} S. Lewis and N. Thiem,  Nonzero coefficients in restrictions and tensor products of supercharacters of $U_n(q)$, \emph{Adv. Math.} \textbf{227}
(2011) 40--72.


\bibitem{MansourSun} T. Mansour and Y. Sun, Identities involving Narayana polynomials and Catalan 
numbers, \emph{Discrete Math.} \textbf{309} (2009) 4079--4088. 


\bibitem{M_normal} E. Marberg, A supercharacter analogue for normality, \emph{J. Algebra} \textbf{332} (2011), 334--365. 


\bibitem{Orellana} R. C. Orellana, On partition algebras for complex reflection groups, \emph{J. Algebra} \textbf{313} (2007) 590--616. 



\bibitem{Reiner_NC} V. Reiner, Non-crossing partitions for classical reflection groups, \emph{Discrete Math.} \textbf{177} (1997) 195--222. 

\bibitem{riordan} J. Riordan, \emph{Combinatorial Identities}, John Wiley, New York, 1968. 

\bibitem{Rogers} D. G. Rogers, Rhyming Schemes: Crossings and Coverings, \emph{Discrete Math.} \textbf{33} (1981), 67--77.





\bibitem{typeB} R. Simion, Combinatorial statistics on type-B analogues of noncrossing partitions and restricted permutations, \emph{Electron. J. Combin.}, 7 (2000) R9. 



\bibitem{OEIS} N. J. A. Sloane, editor (2003), The On-Line Encyclopedia of Integer Sequences, published electronically at 
\url{http://www.research.att.com/÷njas/sequences/}.


\bibitem{spivey} M. Z. Spivey, A generalized recurrence for Bell numbers, \emph{J. Int. Seq.} \textbf{11} Article 08.2.5 (2008).


\bibitem{Stanley} R. P. Stanley, \emph{Enumerative Combinatorics, Vol. 2}, Cambridge University Press, Cambridge, New York, 1999.



\bibitem{SulankeSchroeder} R. A. Sulanke, Bijective recurrences concerning Schr\"oder paths, \emph{Electron. J. Combin.} \textbf{5} (1998), Research Paper 47.


\bibitem{SulankeMotzkin} R. A. Sulanke, Bijective recurrences for Motzkin paths, \emph{Adv. Appl. Math.} \textbf{27} (2001) 627--640.

\bibitem{SulankeNarayana} R. A. Sulanke, The Narayana distribution, \emph{J. Statist. Plann. Inference} \textbf{101} (2002) 311--326.









\bibitem{Thiem} N. Thiem, Branching rules in the ring of superclass functions of unipotent upper-triangular matrices, \emph{J. Algebr. Comb.} \textbf{31} (2009), 267--298.








\end{thebibliography}
\end{document}